\DeclareMathOperator*\uplim{\overline{lim}}
\begin{document}

\newtheorem{theorem}{Theorem}
\newtheorem{lemma}[theorem]{Lemma}
\newtheorem{claim}[theorem]{Claim}
\newtheorem{cor}[theorem]{Corollary}
\newtheorem{conj}[theorem]{Conjecture}
\newtheorem{prop}[theorem]{Proposition}
\newtheorem{definition}[theorem]{Definition}
\newtheorem{question}[theorem]{Question}
\newtheorem{example}[theorem]{Example}
\newcommand{\hh}{{{\mathrm h}}}
\newtheorem{remark}[theorem]{Remark}

\numberwithin{equation}{section}
\numberwithin{theorem}{section}
\numberwithin{table}{section}
\numberwithin{figure}{section}

\def\sssum{\mathop{\sum\!\sum\!\sum}}
\def\ssum{\mathop{\sum\ldots \sum}}
\def\iint{\mathop{\int\ldots \int}}

\newcommand{\diam}{\operatorname{diam}}

\def\squareforqed{\hbox{\rlap{$\sqcap$}$\sqcup$}}
\def\qed{\ifmmode\squareforqed\else{\unskip\nobreak\hfil
\penalty50\hskip1em \nobreak\hfil\squareforqed
\parfillskip=0pt\finalhyphendemerits=0\endgraf}\fi}

\newfont{\teneufm}{eufm10}
\newfont{\seveneufm}{eufm7}
\newfont{\fiveeufm}{eufm5}
%
%
\newfam\eufmfam
     \textfont\eufmfam=\teneufm
\scriptfont\eufmfam=\seveneufm
     \scriptscriptfont\eufmfam=\fiveeufm
%
%
\def\frak#1{{\fam\eufmfam\relax#1}}

\newcommand{\bflambda}{{\boldsymbol{\lambda}}}
\newcommand{\bfmu}{{\boldsymbol{\mu}}}
\newcommand{\bfxi}{{\boldsymbol{\eta}}}
\newcommand{\bfrho}{{\boldsymbol{\rho}}}

\def\eps{\varepsilon}

\def\fK{\mathfrak K}
\def\fT{\mathfrak{T}}
\def\fL{\mathfrak L}
\def\fR{\mathfrak R}

\def\fA{{\mathfrak A}}
\def\fB{{\mathfrak B}}
\def\fC{{\mathfrak C}}
\def\fM{{\mathfrak M}}
\def\fS{{\mathfrak  S}}
\def\fU{{\mathfrak U}}

\def\T {\mathsf {T}}
\def\Tor{\mathsf{T}_d}
\def\Tore{\widetilde{\mathrm{T}}_{d} }

\def\sM {\mathsf {M}}

\def\ss{\mathsf {s}}

\def\Kmnd{\cK_d(m,n)}
\def\Kmnp{\cK_p(m,n)}
\def\Kmnq{\cK_q(m,n)}

\def \balpha{\bm{\alpha}}
\def \bbeta{\bm{\beta}}
\def \bgamma{\bm{\gamma}}
\def \bdelta{\bm{\delta}}
\def \bzeta{\bm{\zeta}}
\def \blambda{\bm{\lambda}}
\def \bchi{\bm{\chi}}
\def \bphi{\bm{\varphi}}
\def \bpsi{\bm{\psi}}
\def \bnu{\bm{\nu}}
\def \bomega{\bm{\omega}}

\def \bell{\bm{\ell}}

\def\eqref#1{(\ref{#1})}

\def\vec#1{\mathbf{#1}}

\newcommand{\abs}[1]{\left| #1 \right|}

\def\Zq{\mathbb{Z}_q}
\def\Zqx{\mathbb{Z}_q^*}
\def\Zd{\mathbb{Z}_d}
\def\Zdx{\mathbb{Z}_d^*}
\def\Zf{\mathbb{Z}_f}
\def\Zfx{\mathbb{Z}_f^*}
\def\Zp{\mathbb{Z}_p}
\def\Zpx{\mathbb{Z}_p^*}
\def\cM{\mathcal M}
\def\cE{\mathcal E}
\def\cH{\mathcal H}

\def\le{\leqslant}

\def\ge{\geqslant}

\def\sfB{\mathsf {B}}
\def\sfC{\mathsf {C}}
\def\L{\mathsf {L}}
\def\FF{\mathsf {F}}

\def\sE {\mathscr{E}}
\def\sS {\mathscr{S}}

\def\cA{{\mathcal A}}
\def\cB{{\mathcal B}}
\def\cC{{\mathcal C}}
\def\cD{{\mathcal D}}
\def\cE{{\mathcal E}}
\def\cF{{\mathcal F}}
\def\cG{{\mathcal G}}
\def\cH{{\mathcal H}}
\def\cI{{\mathcal I}}
\def\cJ{{\mathcal J}}
\def\cK{{\mathcal K}}
\def\cL{{\mathcal L}}
\def\cM{{\mathcal M}}
\def\cN{{\mathcal N}}
\def\cO{{\mathcal O}}
\def\cP{{\mathcal P}}
\def\cQ{{\mathcal Q}}
\def\cR{{\mathcal R}}
\def\cS{{\mathcal S}}
\def\cT{{\mathcal T}}
\def\cU{{\mathcal U}}
\def\cV{{\mathcal V}}
\def\cW{{\mathcal W}}
\def\cX{{\mathcal X}}
\def\cY{{\mathcal Y}}
\def\cZ{{\mathcal Z}}
\newcommand{\rmod}[1]{\: \mbox{mod} \: #1}

\def\cg{{\mathcal g}}

\def\vy{\mathbf y}
\def\vr{\mathbf r}
\def\vx{\mathbf x}
\def\va{\mathbf a}
\def\vb{\mathbf b}
\def\vc{\mathbf c}
\def\ve{\mathbf e}
\def\vh{\mathbf h}
\def\vk{\mathbf k}
\def\vm{\mathbf m}
\def\vz{\mathbf z}
\def\vu{\mathbf u}
\def\vv{\mathbf v}

\def\e{{\mathbf{\,e}}}
\def\ep{{\mathbf{\,e}}_p}
\def\eq{{\mathbf{\,e}}_q}

\def\Tr{{\mathrm{Tr}}}
\def\Nm{{\mathrm{Nm}}}

 \def\SS{{\mathbf{S}}}

\def\lcm{{\mathrm{lcm}}}

 \def\0{{\mathbf{0}}}

\def\({\left(}
\def\){\right)}
\def\l|{\left|}
\def\r|{\right|}
\def\fl#1{\left\lfloor#1\right\rfloor}
\def\rf#1{\left\lceil#1\right\rceil}
\def\sumstar#1{\mathop{\sum\vphantom|^{\!\!*}\,}_{#1}}

\def\mand{\qquad \mbox{and} \qquad}

\def\tblue#1{\begin{color}{blue}{{#1}}\end{color}}




\hyphenation{re-pub-lished}

\mathsurround=1pt

\def\bfdefault{b}

\def \F{{\mathbb F}}
\def \K{{\mathbb K}}
\def \N{{\mathbb N}}
\def \Z{{\mathbb Z}}
\def \P{{\mathbb P}}
\def \Q{{\mathbb Q}}
\def \R{{\mathbb R}}
\def \C{{\mathbb C}}
\def\Fp{\F_p}
\def \fp{\Fp^*}

 \def \xbar{\overline x}

\title{New bounds of Weyl sums}

 \author[C. Chen] {Changhao Chen}

\address{Department of Pure Mathematics, University of New South Wales,
Sydney, NSW 2052, Australia}
\email{changhao.chenm@gmail.com}

 \author[I. E. Shparlinski] {Igor E. Shparlinski}

\address{Department of Pure Mathematics, University of New South Wales,
Sydney, NSW 2052, Australia}
\email{igor.shparlinski@unsw.edu.au}

\begin{abstract}  
%

We  augment the method of  Wooley (2015) by some  new ideas and
in  a series of results, 
improve his metric  bounds  on the Weyl sums
and the discrepancy of fractional parts of  real  polynomials  with 
partially prescribed coefficients. 

We also  extend these  results and ideas  to  
principally new  and very  general settings
of arbitrary orthogonal projections of the vectors of the coefficients
$(u_1, \ldots , u_d)$ onto a lower dimensional subspace. This  new point of view has an additional 
advantage of yielding an upper bound on the Hausdorff dimension of sets of large Weyl sums. 
Among other technical innovations, we also introduce a ``self-improving'' approach, 
which leads to an infinite series of monotonically decreasing bounds, converging to our
final result. 
\end{abstract}

\keywords{Weyl sums, orthogonal projections, discrepancy}
\subjclass[2010]{11K38, 11L15}

\maketitle

\tableofcontents

\section{Introduction}

\subsection{Background}
For an integer $d \geqslant 2$, let $\Tor = (\R/\Z)^d$ be the  $d$-dimensional unit torus. The exponential 
sums  
\begin{equation}
\label{eq:Sd}
S_d(\vu; N)=\sum_{n=1}^N \e(u_1n+\ldots+u_d n^d), \quad \vu = (u_1, \ldots, u_d) \in \Tor, 
\end{equation}
have been  introduced and estimated by Weyl~\cite{Weyl},  and thus are called the~\emph{Weyl sums}, where throughout the paper we denote 
$$
\e(x) = \exp(2\pi ix).
$$
By investigating the properties of the sums~\eqref{eq:Sd}, Weyl~\cite{Weyl}  established  the
  {\it uniformity of distribution modulo one\/}  
of  the sequence  
$$
u_1n+\ldots+u_d n^d, \qquad n\in \N ,  
$$
provided at least one of the coefficients $u_1, \ldots, u_d$  is irrational. 
The Weyl sums play crucial role in many other fundamental number theoretic problems. These include 
estimating the {\it zero-free region} of the {\it Riemann zeta-function} and thus obtaining good bounds in  the error term  in  the {\it prime number theorem}, see~\cite[Section~8.5]{IwKow},  
and  the {\it Waring problem}, see~\cite[Section~20.2]{IwKow} or~\cite{Vau} for a more detailed treatment. 
Further problems include bounds of very short character sums modulo highly composite numbers~\cite[Section~12.6]{IwKow}
and various problems from the uniformity of distribution theory and Diophantine approximations~\cite{Baker}.

However, despite  more than a century long history of estimating such sums, the behaviour of individual 
sums is not well understood. There have been several conjectures made about their behaviour 
and true order of magnitude of such sums depending on Diophantine properties of the coefficients 
$u_1, \ldots, u_d$; some have been ruled out,  some are still widely open even in the case of sums with 
monomials $un^d$,  
see~\cite{Brud,BD}.

The following bound is a direct implication of the current form of the  {\it Vinogradov mean value theorem} from~\cite{BDG, Wool2}
 and is 
 explicitly 
given in~\cite[Theorem~5]{Bourg}.   Let $\vu = (u_1, \ldots, u_d)  \in \Tor$ be such that for some $\nu$ with $2 \le \nu\le d $ and some positive integers $a$ and $q$ with $\gcd(a,q)=1$  we have
$$
\left| u_\nu - \frac{a}{q}\right| \le \frac{1}{q^2}. 
$$
Then for any $\varepsilon>0$ there exits a constant $C(\varepsilon)$ such that
\begin{equation}
\label{eq:individual}  
|S_d(\vu; N)| \le C(\varepsilon)N^{1+\varepsilon} \(q^{-1} + N^{-1} + qN^{-\nu}\)^{1/d(d-1)}. 
\end{equation}
It seems that the current bounds are expected to be far away from 
the true size of $S_d(\vu; N)$. We also remark that  as mentioned by Bourgain~\cite[Section~3]{Bourg}, for $d \le 6$ better results are known.

On the other hand, the behaviour of the average value of the Weyl sums has recently been fully 
unveiled in works of  Bourgain, Demeter and Guth~\cite{BDG} (for $d \geqslant 4$) 
and Wooley~\cite{Wool2} (for $d=3$)  (see also~\cite{Wool5}) in the best possible 
form  
\begin{equation}
\label{eq:MVT}
\int_{\Tor} |S_d(\vu; N)|^{2s(d)}d\vu \leqslant  N^{s(d)+o(1)}, \qquad N \to  \infty,
\end{equation}
of  the Vinogradov mean value theorem, 
where  for $q\in \R$ we denote 
\begin{equation}
\label{eq:sq}
s(q)=\frac{q(q+1)}{2}.
\end{equation}

Here we study a question which originates from the work of Flaminio and Forni~\cite{FlFo} 
and 
has also been studied in more detail by Wooley~\cite{Wool3}.
 Namely, here we seek results  
 which hold for {\it all\/} values of the components of  $\vu = (u_1, \ldots, u_d) \in \Tor$     
on some prescribed set of positions and  {\it almost all\/} values of the components on the remaining positions.  
Thus this question ``interpolates'' between 
{\it individual\/}  bounds and {\it bounds\/} involving some kind of averaging.   Wooley~\cite[Theorem~1.1]{Wool3} has shown that in this setting the individual bound in~\eqref{eq:individual} can be improved. 
 In this project we introduce several additional arguments and  make further improvements.

\subsection{Set-up and previous results}
Given a family $\bphi = \(\varphi_1, \ldots, \varphi_d\)\in \Z[T]^d$  of $d$ distinct 
nonconstant polynomials and a sequence of complex  weights $\vec{a} = (a_n)_{n=1}^\infty$, for $\vu=(u_1, \ldots, u_d)\in \Tor$ we  define the 
trigonometric polynomials 
\begin{equation}
\label{eq:Tphi}
T_{\va, \bphi}( \vu; N)=\sum_{n=1}^{N}a_n \e\(u_1 \varphi_1(n)+\ldots + u_d\varphi_d(n) \).
\end{equation} 

Furthermore, for $k=1, \ldots, d$, decomposing 
$$
\Tor = \T_k\times \T_{d-k}
$$ 
with $\T_k=[0,1)^{k}$ and $\T_{d-k}=[0,1)^{d-k}$. Given  $\vx\in \T_k$, $\vy\in\T_{d-k}$ we refine the notation~\eqref{eq:Tphi}
and  write
$$
T_{\va, \bphi}( \vx, \vy; N)= \sum_{n=1}^{N}a_n \e\(\sum_{j=1}^k x_j \varphi_j(n)+ \sum_{j=1}^{d-k}y_j\varphi_{k+j}(n) \).
$$ 
If $\va = \ve =  (1)_{n=1}^\infty$ (that is,  $a_n =1$ for each $n\in \N$) we just write 
$$
T_{\bphi}(\vx, \vy; N) = T_{\ve, \bphi}( \vx, \vy; N).
$$

For the classical case $a_n = 1$ for all $n\in \N$ and the polynomials
\begin{equation}
\label{eq:classical}
  \{\varphi_1(T),\ldots, \varphi_d(T)\}=\{T, \ldots, T^d\} 
\end{equation} 
satisfying some natural necessary conditions, 
the result of Wooley~\cite[Theorem~1.1]{Wool3} together with the modern knowledge towards the 
Vinogradov mean value theorem, see~\eqref{eq:MVT},  asserts that for almost all $\vx\in \T_k $ 
with respect to the $k$-dimensional Lebesgue measure on $\T_k$, one has 
\begin{equation}
\label{eq:wooley}
\sup_{\vy\in \T_{d-k}} |T_{\bphi}(\vu, \vy; N)| \le N^{\Gamma_\ast(\bphi,k) +o(1)}, \qquad N\to \infty, 
\end{equation}
where 
$$
\Gamma_\ast(\bphi,k) =\frac{1}{2} + \frac{2\sigma_k(\bphi)+d-k+1}{2d^2+4d-2k+2}
$$
and 
\begin{equation}
\label{eq:sigmak}
\sigma_k(\bphi)=\sum_{j=k+1}^d \deg \varphi_j . 
\end{equation}
We remark that the bound~\eqref{eq:wooley} is presented in a more explicit form than in~\cite[Theorem~1.1]{Wool3} 
as we have used the optimal result of Wooley~\cite[Theorem~1.1]{Wool5} for the parameter $u$ of~\cite[Theorem~1.1]{Wool3}. Furthermore  the results in~\cite[Theorem~1.1]{Wool3} have the restriction  that $k<d$, but our method  works for $k=d$  also. Naturally,  for the case $k=d$ we consider $\vx=\vu$ only and  remove the  variable $\vy$ from each statement for this special case.

Here  we use some new ideas to extend the method and results  of Wooley~\cite{Wool3} in serval directions. In particular,  we obtain an improvement of~\eqref{eq:wooley}. 

We note that it is also interesting to find 
a tight upper bound  for the almost all points $\vu\in \Tor$ for the classical  Weyl sums $S_d(\vu; N)$ given by~\eqref{eq:Sd}. 
In this direction the authors~\cite[Appendix~A]{ChSh} have shown  that for almost all $\vu\in \Tor$ one has 
\begin{equation}
\label{eq:1/2}
\left|S_d(\vu; N)\right| \le N^{1/2+o(1)}, \qquad N\to \infty.
\end{equation}
It is very natural to  conjecture that the exponent $1/2$ cannot be improved. 

Fedotov and Klopp~\cite[Theorem~0.1]{FK} have shown that the conjecture is true for    $d=2$. 
More precisely,  for any  non-decreasing sequence  $\{g(n)\}_{n=1}^{\infty}$ of positive numbers,  
 for almost all $\vu\in \mathbb{T}_2$ we have  
$$
\uplim_{N\to  \infty} \frac{  \left |S_2(\vu; N)\right |}{\sqrt{N} g(\ln  N)}<\infty \quad  \Longleftrightarrow \quad  \sum_{n=1}^{\infty} \frac{1}{ g(n)^{6}} <\infty.
$$  
We remark that  the  conjecture is still open for $d\ge 3$.

As in~\cite{Wool3},  we give applications of our bounds of  exponential sums to bounds on the {\it discrepancy\/}
(see Section~\ref{eq:Discr} for a definition) 
 of the sequence of fractional parts of polynomials.   However, we modify and improve the approach of Wooley~\cite{Wool3}  of passing from exponential sums to the  discrepancy  and obtain stronger results.  

\subsection{An overview of our results and tools} 
Here we obtain results of three different types:
\begin{itemize}
\item[(i)]  We study  the scenario of Wooley~\cite{Wool3}  when the vector   $\vu\in \Tor$ is split into two parts $\vx$ and $\vy$
formed by its components which is  related to the coordinate-wise projections of  $\vu\in \Tor$. 

\item[(ii)]  We introduce and study an apparently new problem related to arbitrary orthogonal  projections of   $\vu\in \Tor$.  
As an additional 
benefit, our results for arbitrary orthogonal  projections, combined 
with  the classical {\it Marstrand--Mattila projection theorem\/}, see~\cite[Chapter~5]{Mattila2015},  
lead to an upper bound on the Hausdorff dimension of sets or large Weyl sums, complementing our previous lower bounds~\cite{ChSh}. 

\item[(iii)]   As in~\cite{Wool3},   we study the uniform distribution of polynomials modulo one and obtain a bound for the discrepancy, which improves that of~\cite[Theorem~1.4]{Wool3}.  
\end{itemize}

We note that although our results improve those of~\cite{Wool3}, we see the main value of this work in 
new ways to combine several principal elements which have been used in the area for quite some time. Namely, we exploit the interplay between

\begin{itemize}
\item[(i)] the modern form of the Vinogradov mean value theorem, see, for example, Lemma~\ref{lem:Wooley};

\item[(ii)] the completion technique, see, for example,  Lemma~\ref{lem:control};

\item[(iii)] continuity of Weyl sums, see, for example,  Lemmas~\ref{lem:con gen}, \ref{lem:cont-gen} and~\ref{lem:cont-linear}, 
which in turn leads us to a new type of   ``self-improving''  results in  Lemma~\ref{lem:self}  and Corollary~\ref{cor:self}. 
\end{itemize}

As we have mentioned, as one of the applications of our results  we obtain  an upper bound for  the 
Hausdorff dimension of sets with large Weyl sums, see  Corollaries~\ref{cor: Ed k} and~\ref{cor: Ed d1}.
Other applications are given in Theorems~\ref{thm:weyl short} and~\ref{thm:discrep short} to a bound on short Weyl sums and to the distribution of fractional  parts of polynomials
over short intervals, respectively.

We hope that  similar combinations of these ideas may find several other applications.
 We also believe that the idea of studying arbitrary orthogonal 
projections and its applications to bounds of Hausdorff dimension has never been used in analytic number theory before this work.  

\section{Main results}

\subsection{Results for coordinate-wise projections of $\vu$:  a traditional point of view}

Throughout the paper, 
let $\bphi = \(\varphi_1, \ldots, \varphi_d\)\in \Z[T]^d$  be $d$ distinct nonconstant polynomials such that 
  the Wronskian  
\begin{equation}
\label{eq:Wronsk}
W(T;\bphi) = \det\(\varphi_i^{(j-1)}(T)\)_{i,j=1}^n
\end{equation} 
does not vanish identically and let  $\vec{a} = (a_n)_{n=1}^\infty$ 
be a sequence of complex  weights  with $a_n=n^{o(1)}$. 

We start with a very broad generalisation of~\eqref{eq:wooley}. 
We recall that 
$\sigma_k(\bphi)$ is given by~\eqref{eq:sigmak}. 

\begin{theorem}
\label{thm:general} 
Suppose that $\bphi \in \Z[T]^d$ is such that 
  the Wronskian  $W(T;\bphi)$ 
does not vanish identically, then for almost all $\vx\in \T_k$ one has
$$
\sup_{\vy\in \T_{d-k}} |T_{\va, \bphi}(\vx, \vy; N)| \le N^{\Gamma(\bphi,k) +o(1)},  \qquad N\to \infty, 
$$
where  
$$
\Gamma(\bphi,k) = \frac{1}{2}+\frac{2\sigma_k(\bphi)+d-k} {2d^2+4d-2k}.
$$ 
\end{theorem}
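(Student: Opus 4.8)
The plan is to follow the Wooley-type argument via the modern Vinogradov mean value theorem, combined with the completion technique and a continuity (pigeonhole-on-a-mesh) argument, but to squeeze a slightly better exponent than~\eqref{eq:wooley}. First I would fix $N$ and, using the non-vanishing of the Wronskian $W(T;\bphi)$, reduce to estimating the $L^{2s}$-average of $T_{\va,\bphi}(\vx,\vy;N)$ over both $\vx \in \T_k$ and $\vy \in \T_{d-k}$ for a suitable integer $s$ to be optimised. The Wronskian condition guarantees that the polynomial map $n \mapsto (\varphi_1(n),\ldots,\varphi_d(n))$ is, after a linear change of variables, non-degenerate, so that the relevant mean value is controlled by the classical Vinogradov mean value integral~\eqref{eq:MVT}; concretely one invokes the cited form (Lemma~\ref{lem:Wooley}, which is the $N^{s(d)+o(1)}$ bound for $2s(d)$ moments) and, for moments below the critical exponent, Hölder's inequality or the subcritical version with the appropriate loss $N^{2s - s(d) + o(1)}$ built in.

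Next I would pass from the full $\Tor$-average to the ``for almost all $\vx$, supremum over $\vy$'' statement in two stages. The inner supremum over $\vy \in \T_{d-k}$ is handled by continuity of the Weyl sum: since $a_n = n^{o(1)}$ and the $\varphi_j$ have bounded degree, $T_{\va,\bphi}(\vx,\vy;N)$ is Lipschitz in $\vy$ with constant $N^{O(1)}$, so the sup over the cube is comparable to the max over a mesh of spacing $N^{-C}$, i.e. over $N^{O(1)}$ points, at the cost of only $N^{o(1)}$. This is where the completion technique (Lemma~\ref{lem:control}) enters: it lets one bound $\sup_{\vy}|T_{\va,\bphi}(\vx,\vy;N)|$ by an average of complete sums over a dual lattice in the $\vy$-variables, effectively replacing the continuous supremum by a sum of $\approx N^{d-k}$ genuine exponential-sum values whose $\vx$-moments we can integrate. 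Integrating over $\vx \in \T_k$, summing over the $\approx N^{d-k}$ dual points, and then applying Chebyshev/Borel--Cantelli along $N$ running through a sufficiently dense sequence (again using continuity in $\vx$ and $N$ to interpolate) yields the almost-everywhere bound. The exponent $\Gamma(\bphi,k) = \tfrac12 + \frac{2\sigma_k(\bphi)+d-k}{2d^2+4d-2k}$ then comes out of optimising the choice of $s$: balancing the $N^{2s}$-normalised moment bound $N^{s(d)+o(1)}$, the $\approx N^{d-k + 2\sigma_k(\bphi)}$-type factor from completing in the $\vy$-coordinates (each of degree contributing to $\sigma_k(\bphi)$), and the $(1/2s)$-th root one takes at the end.

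The main obstacle I expect is twofold. First, getting the \emph{precise} constant $2d^2+4d-2k$ in the denominator rather than Wooley's $2d^2+4d-2k+2$: this improvement presumably comes from a sharper bookkeeping in the completion step — only completing over the $\vy$-variables and handling the leftover via the continuity lemma rather than via an extra averaging variable, so one saves the ``$+1$'' in the exponent count (equivalently, one does not sacrifice one unit of the mean-value moment to the completion). Tracking this carefully, and in particular verifying that the Wronskian hypothesis (rather than the literal monomial system~\eqref{eq:classical}) still supplies the full-strength mean value bound after the linear change of variables, is the delicate part. Second, one must make sure the Borel--Cantelli step is uniform: the null set of bad $\vx$ must not depend on $\vy$ (handled by the mesh argument above) nor on $N$ (handled by interpolating between $N$ and $N+1$ using $|T_{\va,\bphi}(\vx,\vy;N) - T_{\va,\bphi}(\vx,\vy;N')| \le N^{o(1)}|N-N'|$), and the exceptional sets over a dyadic-type sequence of $N$ must have summable measure, which forces the choice of $s$ (hence of the density of the sequence of $N$) to be coordinated with the target exponent. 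Once these are in place the result follows by the standard limsup argument. I would not expect any genuinely new difficulty beyond careful constant-chasing, since all three ingredients — Lemmas~\ref{lem:Wooley}, \ref{lem:control}, and the continuity Lemmas~\ref{lem:con gen}, \ref{lem:cont-gen} — are quoted as available.
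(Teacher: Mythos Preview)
Your overall architecture --- mean value theorem, continuity/mesh, Chebyshev, Borel--Cantelli --- matches the paper, but you have swapped the roles of two ingredients, and this creates a real gap. The completion lemma (Lemma~\ref{lem:control}) has nothing to do with the $\vy$-variables: it bounds $T_{\va,\bphi}(\vu;M)$ for \emph{every} $1\le M\le N$ by a single object $W_{\va,\bphi}(\vu;N)$ (an average over one dual variable $h$ attached to the summation index $n$). Its job in the paper is precisely the step you try to do by ``interpolating between $N$ and $N+1$'': once Borel--Cantelli along $N_i=2^i$ gives $\sup_\vy |W_{\va,\bphi}(\vx,\vy;N_i)|\le N_i^\alpha$ for large $i$, Lemma~\ref{lem:control} immediately yields $\sup_\vy |T_{\va,\bphi}(\vx,\vy;N)|\ll N^\alpha$ for \emph{all} $N$. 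Your trivial bound $|T(\cdot;N)-T(\cdot;N')|\le N^{o(1)}|N-N'|$ is useless across a dyadic gap (it gives only $O(N)$), and densifying the sequence of $N$ to compensate ruins summability in Borel--Cantelli --- in fact this is exactly the ``$+1$'' that separates the paper's exponent from Wooley's, so your guess about where the saving occurs is wrong.

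Conversely, the supremum over $\vy$ is handled purely by continuity (Lemma~\ref{lem:cont-gen}), organised as box-counting rather than a mesh sum: cover $\Tor$ by boxes of side $\zeta_j\asymp N^{-(e_j+1-\alpha)}$, note that any box containing a point with $|W|\ge N^\alpha$ has $|W|\ge N^\alpha/2$ on a positive fraction, and let the full $\Tor$-mean value (Corollary~\ref{cor:MVT-W}) bound the number of such boxes (Lemma~\ref{lem:counting}). Projecting the bad boxes onto $\T_k$ gives Corollary~\ref{cor:Markov},
\[
\lambda\bigl(\{\vx\in\T_k:\ \exists\,\vy,\ |W_{\va,\bphi}(\vx,\vy;N)|\ge N^\alpha\}\bigr)\le N^{s(d)(1-2\alpha)+\sigma_k(\bphi)+(d-k)(1-\alpha)+o(1)},
\]
and negativity of this exponent is exactly $\alpha>\Gamma(\bphi,k)$. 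Note that no step integrates over $\T_k$ at a \emph{fixed} $\vy$ --- such a partial mean value bound is not available --- so your plan ``integrate over $\vx$, sum over $\approx N^{d-k}$ dual points'' cannot be carried out as written; the full $\Tor$-integral is what one must use, and the passage to $\T_k$ is a projection of boxes, not a Fubini step.
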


We remark that Theorem~\ref{thm:general} gives a nontrivial upper bound provided 
that $\sigma_k(\bphi)<s(d)$, where $s(d)$ is given by~\eqref{eq:sq}.

Furthermore for the classical choice of $\bphi$ as in~\eqref{eq:classical} we always have 
$\sigma_k(\bphi)<s(d)$.  
 Elementary calculations show that  
$$
\Gamma(\bphi, k)< \Gamma_\ast(\bphi,k),  \qquad k=1, \ldots, d.
$$  
Thus  Theorem~\ref{thm:general}  gives a direct improvement 
and generalisation of the bound~\eqref{eq:wooley},  which is due to Wooley~\cite[Theorem~1.1]{Wool3}.

We observe also  that  $\Gamma(\bphi, k)=1/2$ when $k=d$, and this gives the same bound as that of~\eqref{eq:1/2}  for more general polynomials $\bphi$. More precisely, we have the following.

\begin{cor}
Let $\bphi \in \Z[T]^d$, $d\ge 2$, be such that the Wronskian  $W(T;\bphi)$ 
does not vanish identically and let $a_n=n^{o(1)}$, then  for almost all $\vu\in \Tor$ one has  
$$
\left | \sum_{n=1}^N a_n \e(u_1 \varphi_1 (n) +\ldots +u_d \varphi_d(n))\right| \le N^{1/2+o(1)}, \qquad N\to \infty.
$$
\end{cor}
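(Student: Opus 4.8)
The plan is to deduce the Corollary directly from Theorem~\ref{thm:general} by specializing to the case $k = d$. When $k = d$, the decomposition $\Tor = \T_k \times \T_{d-k}$ degenerates to $\Tor = \T_d \times \T_0$, so (as noted in the remark following~\eqref{eq:wooley}) the variable $\vy$ disappears, $\vx = \vu$, and the supremum over $\vy \in \T_{d-k}$ is vacuous. Thus the statement of Theorem~\ref{thm:general} reads: for almost all $\vu \in \Tor$ one has $|T_{\va,\bphi}(\vu; N)| \le N^{\Gamma(\bphi,d) + o(1)}$. It remains only to compute $\Gamma(\bphi, d)$.

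First I would evaluate $\sigma_k(\bphi)$ at $k = d$: by definition~\eqref{eq:sigmak}, $\sigma_d(\bphi) = \sum_{j=d+1}^d \deg \varphi_j$ is an empty sum, hence $\sigma_d(\bphi) = 0$. Substituting $k = d$ and $\sigma_d(\bphi) = 0$ into the formula
$$
\Gamma(\bphi,k) = \frac{1}{2} + \frac{2\sigma_k(\bphi) + d - k}{2d^2 + 4d - 2k}
$$
gives
$$
\Gamma(\bphi,d) = \frac{1}{2} + \frac{0 + 0}{2d^2 + 4d - 2d} = \frac{1}{2} + \frac{0}{2d^2 + 2d} = \frac{1}{2},
$$
and the denominator $2d^2 + 2d = 2d(d+1)$ is strictly positive for $d \ge 2$, so there is no division issue. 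Therefore Theorem~\ref{thm:general} yields $|T_{\va,\bphi}(\vu; N)| \le N^{1/2 + o(1)}$ for almost all $\vu \in \Tor$, which upon unwinding the definition~\eqref{eq:Tphi} of $T_{\va,\bphi}(\vu; N)$ is exactly the claimed bound on $\left|\sum_{n=1}^N a_n \e(u_1\varphi_1(n) + \ldots + u_d\varphi_d(n))\right|$.

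The only hypotheses needed are precisely those carried in the running assumptions and restated in the Corollary: that the Wronskian $W(T;\bphi)$ does not vanish identically, and that $a_n = n^{o(1)}$ (equivalently $\vec a = (a_n)$ has $a_n = n^{o(1)}$), both of which are the standing hypotheses of Theorem~\ref{thm:general}. I would also remark, as the authors do in the text, that the nontriviality condition $\sigma_k(\bphi) < s(d)$ is automatic here since $\sigma_d(\bphi) = 0 < s(d)$ for $d \ge 2$, so the bound is genuinely nontrivial. There is essentially no obstacle: the Corollary is a pure specialization, and the one substantive point — that the method of Theorem~\ref{thm:general} does not break down at the boundary case $k = d$ (where the $\sup_{\vy}$ is absent) — has already been flagged in the discussion surrounding~\eqref{eq:wooley}, so it may be invoked without further comment. $\qed$
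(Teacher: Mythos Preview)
Your proposal is correct and matches the paper's approach exactly: the Corollary is stated immediately after the observation that $\Gamma(\bphi,d)=1/2$, and the paper offers no further proof beyond this specialization of Theorem~\ref{thm:general} to $k=d$. Your computation of $\sigma_d(\bphi)=0$ and $\Gamma(\bphi,d)=1/2$, together with the remark that the $k=d$ case is admissible, is precisely the intended argument.
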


For some special cases of $\bphi \in \Z[T]^d$, we obtain a series of better bounds which in almost all cases are better than Theorem~\ref{thm:general} and thus give a further improvement of 
the  result  of Wooley~\cite[Theorem~1.1]{Wool3}.   The bounds are based on a new ``self-improving'' argument, see  Lemma~\ref{lem:self}
and  Corollary~\ref{cor:self} below. 

We consider the following three mutually exclusive possibilities:

\begin{itemize}
\item[\bf{A.}]  For some $k+1\le j\le d$ we have $\deg \varphi_j=1$, that is, with there is a linear polynomial attached to the vector $\vy$.
\item[\bf{B.}]  For some $1\le j\le k$ we have $\deg \varphi_j=1$, that is, with there is a linear polynomial attached to the vector $\vx$.
\item[\bf{C.}]  For all $1\le j\le d$ we have   $\deg \varphi_j \ge 2$, that is, $\bphi$ does not contain a linear polynomial. 
\end{itemize}

To reflect these there possibilities we denote new exponents,   replacing
$\Gamma(\bphi,k)$ by  $\Gamma_{YL} (\bphi,k)$,  $\Gamma_{XL} (\bphi,k)$ and  $\Gamma_{NL} (\bphi,k)$.

In fact our main result below Theorem~\ref{thm:A} handles only Case~{\bf{A}}. 
Then we reduce Cases~{\bf{B}} and~{\bf{C}} 
to   Case~{\bf{A}}.

Indeed, for Case~{\bf{B}},  assuming without loss of generality that $\deg \varphi_k =1$,  we simply write
 $T_{\va, \bphi}(\vx, \vy; N) =T_{\va, \bphi}(\check\vx, \hat \vy; N)$,  where we append $x_k$ to $\vy$ so that 
 $\hat \vy = (x_k, y_{k+1}, \ldots, y_d) \in \T_{d-k+1}$  which 
we estimate  for almost all $\check \vx = (x_1, \dots, x_{k-1}) \in \T_{k-1}$. 
That is, in Case~{\bf{B}}, for any $  \vx \in \T_k$ we use the inequality
$$
\sup_{\vy\in \T_{d-k}} |T_{\va, \bphi}(\vx, \vy; N)| \le \sup_{ \hat \vy \in \T_{d-k+1}} |T_{\va, \bphi}(\check\vx, \hat \vy; N)|. 
$$

To tackle  Case~{\bf{C}}, we simply replace $T_{\va, \bphi}(\vx, \vy; N)$
with $T_{\va, \widehat \bphi}(\vx, \widehat \vy; N)$, 
where we append $y_{d+1}$ to $\vy$ and $\varphi_{d+1}(T)  = T$
to $\bphi$, so that   $\widehat \vy = (y_{k+1}, \ldots, y_d, y_{d+1}) \in \T_{d-k+1}$  and 
$ \widehat{\bphi} = (\varphi_1, \ldots, \varphi_d, \varphi_{d+1}) \in \Z[T]^{d+1}$, 
which we estimate  for almost all $ \vx = (x_1, \dots, x_{k}) \in \T_{k}$. 
That is, in Case~{\bf{C}}, for any $  \vx \in \T_k$ we use the inequality
$$
\sup_{\vy\in \T_{d-k}} |T_{\va, \bphi}(\vx, \vy; N)| \le \sup_{ \widehat \vy \in \T_{d-k+1}} |T_{\va, \widehat \bphi}(\vx, \widehat \vy; N)|.
$$


More precisely, recalling the definitions~\eqref{eq:sq} and~\eqref{eq:sigmak} in Case~{\bf{A}} we have
the following bound.

\begin{theorem} 
\label{thm:A} 
Suppose that $\bphi \in \Z[T]^d$ is such that 
  the Wronskian  $W(T;\bphi)$ 
does not vanish identically and suppose that 
$$
 \min_{k<j\le d} \deg \varphi_j =1. 
$$
Then for almost all $\vx\in \T_k$ one has
$$
\sup_{\vy\in \T_{d-k}} |T_{\va, \bphi}(\vx, \vy; N)| \le N^{\Gamma_{YL}(\bphi,k) +o(1)},  \qquad N\to \infty, 
$$
where  
$$
\Gamma_{YL}(\bphi,k)= \frac{1}{2}+\frac{\sigma_k(\bphi)}{2s(d)}. 
$$
\end{theorem}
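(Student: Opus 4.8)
The plan is to exploit the linear polynomial attached to $\vy$ to decouple the supremum over $\vy$ from the averaging over $\vx$, following the general strategy of Wooley~\cite{Wool3} but keeping a cleaner bookkeeping of exponents and invoking the sharp mean value theorem~\eqref{eq:MVT}. After relabelling, we may assume $\deg\varphi_{k+1}=1$, so write $\varphi_{k+1}(T)=bT+c$ for integers $b,c$ with $b\neq 0$. Fix all of $\vx\in\T_k$ and all of $y_{k+2},\ldots,y_d$, and regard the remaining variable $y_1:=y_{k+1}$ as free. The key observation is that $T_{\va,\bphi}(\vx,\vy;N)$ is, as a function of $y_1$, a trigonometric polynomial whose frequencies $b n+c$, $1\le n\le N$, all lie in an interval of length $O(N)$. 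Hence by the completion technique (Lemma~\ref{lem:control}), the supremum over $y_1\in[0,1)$ is controlled, up to a factor $N^{o(1)}$, by a smoothed sum over the remaining variables; more precisely one bounds $\sup_{\vy}|T_{\va,\bphi}(\vx,\vy;N)|$ by $N^{o(1)}$ times a quantity of the shape
\begin{equation*}
\max_{|h|\le N}\,\Bigl|\sum_{n=1}^{N} a_n\, \mathbf{1}[bn+c=h]\;\e\Bigl(\textstyle\sum_{j=1}^k x_j\varphi_j(n)+\sum_{j=2}^{d-k}y_j\varphi_{k+j}(n)\Bigr)\Bigr|,
\end{equation*}
or, after a dyadic/continuity reduction, by a full exponential sum in which the linear frequency has been removed and the remaining phase involves only the polynomials of degree $\ge 2$ together with the free variables $\vx$. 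The upshot is that we have reduced to estimating, for almost all $\vx$, a quantity governed by the Weyl sum in $d-1$ effective "high-degree" variables (the $x_j$'s) plus $d-k-1$ genuinely free variables of degree $\ge 2$.

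Next I would run the standard metric argument: raise to the power $2s(d)$, integrate over $\vx\in\T_k$, apply Fubini, and use the Vinogradov mean value theorem in the sharp form~\eqref{eq:MVT}. The point of choosing the exponent $2s(d)$ with $s(d)=d(d+1)/2$ is that the mean value over the full set of $d$ variables is $N^{s(d)+o(1)}$, while the contribution of the $\sigma_k(\bphi)$ "lost" degrees among the $\vy$-variables produces an extra loss of size roughly $N^{\sigma_k(\bphi)+o(1)}$ once one trivially bounds the corresponding part of the phase. Balancing these, one gets $\int_{\T_k}\sup_{\vy}|T_{\va,\bphi}(\vx,\vy;N)|^{2s(d)}\,d\vx \le N^{s(d)+\sigma_k(\bphi)+o(1)}$, whence for almost all $\vx$ (via a Borel--Cantelli argument along a sparse sequence $N=2^m$, then interpolating using the continuity Lemmas~\ref{lem:con gen} or~\ref{lem:cont-linear} to pass to all $N$) one obtains $\sup_{\vy}|T_{\va,\bphi}(\vx,\vy;N)|\le N^{(s(d)+\sigma_k(\bphi))/(2s(d))+o(1)} = N^{1/2+\sigma_k(\bphi)/(2s(d))+o(1)}$, which is exactly $\Gamma_{YL}(\bphi,k)$.

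A few technical points need care. First, the nonvanishing of the Wronskian $W(T;\bphi)$ is used to guarantee that a nonsingular integer linear change of variables turns the high-degree part of $\bphi$ into (a multiple of) the classical monomial system $T^2,\ldots,T^d$ up to lower-order terms, so that~\eqref{eq:MVT} genuinely applies; this is the role that the Wronskian hypothesis plays throughout the paper, so I would cite the corresponding preparatory lemma rather than redo it. Second, the weights $a_n=n^{o(1)}$ are harmless: they contribute only a factor $N^{o(1)}$ after Cauchy--Schwarz or after absorbing them into the completion step. Third, the passage from the discrete set $\{2^m\}$ to all $N$, and from the smoothed completed sum back to the genuine supremum over $\vy$, is where the "self-improving" continuity lemmas (Lemma~\ref{lem:self}, Corollary~\ref{cor:self}) enter; in the present statement they are not strictly needed to get $\Gamma_{YL}$, but they do streamline the interpolation. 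The main obstacle, and the step I expect to require the most thought, is the completion/decoupling in $y_1$: one must make sure that after removing the linear frequency the remaining object is still a genuine Weyl-type sum to which~\eqref{eq:MVT} applies with the full exponent $2s(d)$, rather than an exponent weakened by the removal of one variable — i.e.\ the bookkeeping that shows the "cost" of the linear variable is exactly a clean factor and not a fractional loss in the exponent. Getting that accounting right is what produces the denominator $2s(d)$ in $\Gamma_{YL}(\bphi,k)$ rather than the larger denominator $2d^2+4d-2k$ appearing in Theorem~\ref{thm:general}.
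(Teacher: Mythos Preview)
Your proposal has a genuine gap at its central step. You claim the moment bound
\[
\int_{\T_k}\sup_{\vy}|T_{\va,\bphi}(\vx,\vy;N)|^{2s(d)}\,d\vx \le N^{s(d)+\sigma_k(\bphi)+o(1)},
\]
but the argument you sketch for it is not valid. The displayed ``completed'' expression with the indicator $\mathbf{1}[bn+c=h]$ picks out a single term $n$ and is trivially $O(N^{o(1)})$; that is not what completion in $y_1$ produces, and it cannot be used to remove the supremum over $y_1$. More generally, there is no cheap way to replace $\sup_{y_1}$ by an average without losing a factor of order $N^{1/2}$ or worse, and you give no mechanism for handling the remaining $d-k-1$ non-linear $\vy$-variables beyond the phrase ``trivially bounds the corresponding part of the phase.'' The claimed moment bound would indeed yield $\Gamma_{YL}$ directly, but you have not proved it, and the paper does not establish it either.

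The paper's route is essentially different from what you describe, and the self-improving lemmas you call ``not strictly needed'' are in fact the whole engine. The linear polynomial attached to $\vy$ is used not to eliminate $y_1$, but to \emph{absorb the completion parameter} $h/N$ appearing in $W_{\va,\bphi}$ into the $\vy$-variable (Lemma~\ref{lem:cont-linear}): if one already knows $\sup_{\vy}|T_{\va,\bphi}(\vx,\vy;M)|\le CM^t$ for all $M\le N$, then each inner sum in $W_{\va,\bphi}(\vx,\vy;N)$ inherits the bound $CM^t$ rather than the trivial $M$. This sharpens the continuity/box-counting estimate (Corollary~\ref{cor:box-linear}), replacing the factor $N^{(d-k)(1-\alpha)}$ in Corollary~\ref{cor:Markov} by $N^{(d-k)(t-\alpha)}$. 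Feeding this into the Borel--Cantelli argument upgrades any a.e.\ exponent $t$ to
\[
f(t)=\frac{s(d)+\sigma_k(\bphi)+(d-k)t}{2s(d)+d-k},
\]
and iterating $t\mapsto f(t)$ from $t=1$ (or from $\Gamma(\bphi,k)$ via Theorem~\ref{thm:general}) converges to the fixed point $\Gamma_{YL}(\bphi,k)=\tfrac12+\sigma_k(\bphi)/(2s(d))$. That iteration, not a single moment inequality, is what produces the denominator $2s(d)$.
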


We remark that if $\sigma_k(\bphi)<d(d+1)/2$ then for each $k=1, \ldots, d-1$ one has  
$$
\Gamma_{YL}(\bphi,k)< \Gamma(\bphi,k).
$$
Moreover for the case $k=d$ we have  $\Gamma_{YL}(\bphi, d)=\Gamma(\bphi, d)=1/2$.
Thus Theorem~\ref{thm:A} improves Theorem~\ref{thm:general} if  there is a linear polynomial attached to the vector $\vy$.

As we have described in the above, for Cases~{\bf B} and~{\bf C}, from Theorem~\ref{thm:A} we obtain the following two estimates:

\begin{cor}
\label{cor:B} 
Suppose that $\bphi \in \Z[T]^d$ is such that 
  the Wronskian  $W(T;\bphi)$ 
does not vanish identically and suppose that $k\ge 2$ and 
$$
 \min_{1\le j\le k} \deg \varphi_j =1. 
$$
Then for almost all $\vx\in \T_k$ one has
$$
\sup_{\vy\in \T_{d-k}} |T_{\va, \bphi}(\vx, \vy; N)| \le N^{\Gamma_{XL}(\bphi,k) +o(1)},  \qquad N\to \infty, 
$$
where  
$$
\Gamma_{XL}(\bphi,k)= \frac{1}{2}+\frac{\sigma_k(\bphi)+1}{2s(d)}. 
$$
\end{cor}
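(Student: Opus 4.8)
The plan is to deduce Corollary~\ref{cor:B} directly from Theorem~\ref{thm:A} via the reduction of Case~{\bf B} to Case~{\bf A} that was already sketched in the text preceding Theorem~\ref{thm:A}. Concretely, assume without loss of generality that $\deg \varphi_k = 1$ (if some other index $j \le k$ has $\deg\varphi_j = 1$, permute the first $k$ coordinates of $\vx$ and the corresponding entries of $\bphi$, which changes nothing since we are asking for a bound valid for almost all $\vx \in \T_k$ and the permuted Wronskian still does not vanish identically). Then split $\vx = (\check\vx, x_k)$ with $\check\vx = (x_1,\dots,x_{k-1}) \in \T_{k-1}$, and set $\hat\vy = (x_k, y_{k+1},\dots,y_d) \in \T_{d-k+1}$. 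By definition of $T_{\va,\bphi}$, for every $\vx \in \T_k$ and every $\vy \in \T_{d-k}$ we have the pointwise identity $T_{\va,\bphi}(\vx,\vy;N) = T_{\va,\bphi}(\check\vx,\hat\vy;N)$, where on the right-hand side we regard $\bphi$ as split so that the first $k-1$ polynomials are attached to $\check\vx$ and the last $d-k+1$ polynomials (which include the linear $\varphi_k$) are attached to $\hat\vy$. Taking suprema over the larger range gives
$$
\sup_{\vy\in \T_{d-k}} |T_{\va, \bphi}(\vx, \vy; N)| \le \sup_{ \hat \vy \in \T_{d-k+1}} |T_{\va, \bphi}(\check\vx, \hat \vy; N)|
$$
for every $\vx \in \T_k$, hence in particular for almost all $\vx$.

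Next I would apply Theorem~\ref{thm:A} with the roles played by: the same tuple $\bphi \in \Z[T]^d$, dimension $d$ unchanged, but the splitting parameter now $k' = k-1$ in place of $k$. The hypotheses of Theorem~\ref{thm:A} are met: the Wronskian $W(T;\bphi)$ does not vanish identically (this is exactly the hypothesis of Corollary~\ref{cor:B}), and the minimality condition $\min_{k' < j \le d}\deg\varphi_j = 1$ holds because $\varphi_k$ is linear and $k > k' = k-1$, so $\varphi_k$ lies among the polynomials indexed by $j > k'$. Note the requirement $k \ge 2$ in Corollary~\ref{cor:B} guarantees $k' = k-1 \ge 1$, so the splitting $\T_{k'} \times \T_{d-k'}$ is meaningful. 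Theorem~\ref{thm:A} then yields, for almost all $\check\vx \in \T_{k-1}$,
$$
\sup_{ \hat \vy \in \T_{d-k+1}} |T_{\va, \bphi}(\check\vx, \hat \vy; N)| \le N^{\Gamma_{YL}(\bphi, k-1) + o(1)}, \qquad N \to \infty.
$$

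It remains to translate the almost-all statement over $\check\vx \in \T_{k-1}$ into one over $\vx \in \T_k$ and to compute the exponent. For the measure-theoretic part: if $\mathcal{N} \subset \T_{k-1}$ is the null set off which the above bound holds, then $\mathcal{N} \times [0,1) \subset \T_k$ is null for $k$-dimensional Lebesgue measure (by Fubini), and for every $\vx = (\check\vx, x_k)$ with $\check\vx \notin \mathcal{N}$ the chain of inequalities above gives the desired bound; thus the bound holds for almost all $\vx \in \T_k$. For the exponent: by~\eqref{eq:sigmak}, $\sigma_{k-1}(\bphi) = \sum_{j=k}^d \deg\varphi_j = \deg\varphi_k + \sum_{j=k+1}^d \deg\varphi_j = 1 + \sigma_k(\bphi)$, so
$$
\Gamma_{YL}(\bphi, k-1) = \frac{1}{2} + \frac{\sigma_{k-1}(\bphi)}{2s(d)} = \frac{1}{2} + \frac{\sigma_k(\bphi)+1}{2s(d)} = \Gamma_{XL}(\bphi,k),
$$
which is precisely the claimed exponent. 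This completes the proof. There is essentially no obstacle here — the only points requiring a modicum of care are the harmless reduction to the case $\deg\varphi_k = 1$ by permuting coordinates, and the Fubini argument ensuring that ``almost all $\check\vx$'' upgrades to ``almost all $\vx$''; both are routine. The entire content of the corollary is bookkeeping on top of Theorem~\ref{thm:A}.
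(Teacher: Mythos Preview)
Your proposal is correct and follows exactly the approach the paper itself sketches in the discussion of Case~{\bf B} preceding Theorem~\ref{thm:A}: move the linear coordinate $x_k$ into the $\vy$-block, apply Theorem~\ref{thm:A} with $k'=k-1$, and compute $\sigma_{k-1}(\bphi)=\sigma_k(\bphi)+1$. The only additions in your write-up are the routine justifications (coordinate permutation, Fubini for the null set) that the paper leaves implicit.
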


\begin{cor} 
\label{cor:C} 
Suppose that $\bphi \in \Z[T]^d$ is such that 
  the Wronskian  $W(T;\widehat \bphi)$ of $  \widehat{\bphi} = (\varphi_1, \ldots, \varphi_d, \varphi_{d+1}) \in \Z[T]^{d+1}$ with $\varphi_{d+1}(T)=T$ 
does not vanish identically  and suppose that  
$$
 \min_{j=1, \ldots, d} \deg \varphi_j \ge 2. 
$$
Then for almost all $\vx\in \T_k$ one has
$$
\sup_{\vy\in \T_{d-k}} |T_{\va, \bphi}(\vx, \vy; N)| \le N^{\Gamma_{NL}(\bphi,k) +o(1)},  \qquad N\to \infty, 
$$
where  
$$
\Gamma_{NL}(\bphi,k)= \frac{1}{2}+\frac{\sigma_k(\bphi)+1}{2s(d+1)}. 
$$
\end{cor}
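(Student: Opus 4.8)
The plan is to obtain Corollary~\ref{cor:C} as a formal consequence of Theorem~\ref{thm:A}, via the one-dimension-up device already described before the statement. Set $\varphi_{d+1}(T)=T$ and form $\widehat{\bphi} = (\varphi_1, \ldots, \varphi_d, \varphi_{d+1}) \in \Z[T]^{d+1}$; for $\vy = (y_{k+1}, \ldots, y_d)\in\T_{d-k}$ write $\widehat\vy = (y_{k+1}, \ldots, y_d, y_{d+1})\in\T_{d+1-k}$. Because $\deg\varphi_j\ge 2$ for $1\le j\le d$ by hypothesis, the polynomial $\varphi_{d+1}$ differs from all the others, so $\widehat\bphi$ is a family of $d+1$ distinct nonconstant polynomials; its Wronskian $W(T;\widehat\bphi)$ does not vanish identically by the assumption of the corollary; and the weight sequence $\va$ is untouched, so $a_n=n^{o(1)}$ persists. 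Thus $\widehat\bphi$, with the splitting $\T_k\times\T_{d+1-k}$ of $\T_{d+1}$, satisfies all the standing hypotheses in ambient dimension $d+1$.

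Next I would record the elementary domination, valid for every $\vx\in\T_k$ and every $N$,
$$
\sup_{\vy\in\T_{d-k}}|T_{\va,\bphi}(\vx,\vy;N)| \le \sup_{\widehat\vy\in\T_{d+1-k}}|T_{\va,\widehat\bphi}(\vx,\widehat\vy;N)|,
$$
which holds because setting $y_{d+1}=0$ turns the extra factor $\e(y_{d+1}\varphi_{d+1}(n))$ into $1$, so the left-hand supremum is over the slice $\{y_{d+1}=0\}$ of the right-hand domain. Hence it suffices to bound the right-hand side for almost all $\vx\in\T_k$, and this is exactly a Case~\textbf{A} situation for $\widehat\bphi$: the linear polynomial $\varphi_{d+1}$ is attached to the $\widehat\vy$-block, and $\min_{k<j\le d+1}\deg\varphi_j = \deg\varphi_{d+1}=1$ since $k+1\le d+1$. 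Applying Theorem~\ref{thm:A} to $\widehat\bphi$ with the same $k$ and ambient dimension $d+1$ gives, for almost all $\vx\in\T_k$,
$$
\sup_{\widehat\vy\in\T_{d+1-k}}|T_{\va,\widehat\bphi}(\vx,\widehat\vy;N)| \le N^{\Gamma_{YL}(\widehat\bphi,k)+o(1)}, \qquad N\to\infty.
$$

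Finally one identifies the exponent: from $\sigma_k(\widehat\bphi)=\sum_{j=k+1}^{d+1}\deg\varphi_j = \sigma_k(\bphi)+1$ and the definition~\eqref{eq:sq} of $s(\cdot)$,
$$
\Gamma_{YL}(\widehat\bphi,k) = \frac12 + \frac{\sigma_k(\widehat\bphi)}{2s(d+1)} = \frac12+\frac{\sigma_k(\bphi)+1}{2s(d+1)} = \Gamma_{NL}(\bphi,k),
$$
and chaining this with the domination above yields the claim. I do not expect any genuine obstacle, since the argument is purely a bookkeeping reduction; the points that need care are: checking that appending $\varphi_{d+1}(T)=T$ keeps the family distinct and nonconstant (this is where $\deg\varphi_j\ge 2$ is used); noting that the non-vanishing of $W(T;\widehat\bphi)$ is assumed rather than something to be proved; placing the dummy coordinate $y_{d+1}$ in the $\vy$-block so that Case~\textbf{A} (and not Case~\textbf{B}) applies; and confirming that the $o(1)$ error in Theorem~\ref{thm:A} is uniform in $\widehat\vy$, which is what lets the supremum stay inside the bound.
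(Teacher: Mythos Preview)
Your proposal is correct and follows exactly the reduction the paper itself describes just before stating Corollaries~\ref{cor:B} and~\ref{cor:C}: append $\varphi_{d+1}(T)=T$ to $\bphi$, use the trivial domination $\sup_{\vy}|T_{\va,\bphi}(\vx,\vy;N)|\le\sup_{\widehat\vy}|T_{\va,\widehat\bphi}(\vx,\widehat\vy;N)|$, and invoke Theorem~\ref{thm:A} in ambient dimension $d+1$. Your computation $\sigma_k(\widehat\bphi)=\sigma_k(\bphi)+1$ and the resulting identification $\Gamma_{YL}(\widehat\bphi,k)=\Gamma_{NL}(\bphi,k)$ are exactly the bookkeeping the paper leaves implicit.
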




As yet another  application of Theorem~\ref{thm:A}  we derive the following bounds for the short sums. 
For $M\in \Z$, we consider Weyl sums over short intervals  
$$
S_d(\vu; M,N) = \sum_{n=M+1}^{M+N} \e(u_1n+\ldots+u_d n^d). 
$$

\begin{theorem}
\label{thm:weyl short}
For almost all $x_d\in  [0,1]$, one has 
$$
\sup_{(y_1, \ldots, y_{d-1})\in \T_{d-1}} \sup_{M \in \Z} \left |S_d(\vu; M,N)  \right | \le N^{1-1/(d+1) +o(1)}, \qquad N\to \infty, 
$$
where $\vu = (y_{1}, \ldots, y_{d-1}, x_d)$. 
\end{theorem}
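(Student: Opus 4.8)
The plan is to reduce the short-sum bound to the long-sum bound from Theorem~\ref{thm:A} by the standard completion technique, applied uniformly over the starting point $M$. First, observe that for $S_d(\vu; M, N)$ with $\vu = (y_1, \ldots, y_{d-1}, x_d)$, a shift of the summation index $n \mapsto n + M$ turns the monomial $n^d$ into $(n+M)^d = \sum_{i=0}^d \binom{d}{i} M^{d-i} n^i$. Hence $S_d(\vu; M, N) = \e(\xi) \sum_{n=1}^N \e(v_1 n + \cdots + v_{d-1} n^{d-1} + x_d n^d)$ for some real phase $\xi$ and some new coefficients $v_1, \ldots, v_{d-1}$ (depending on $M$, $x_d$ and the original $y_j$), while the leading coefficient $x_d$ is unchanged. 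Thus
\[
\sup_{(y_1, \ldots, y_{d-1}) \in \T_{d-1}} \sup_{M \in \Z} |S_d(\vu; M, N)| \le \sup_{(v_1, \ldots, v_{d-1}) \in \T_{d-1}} \left| \sum_{n=1}^N \e(v_1 n + \cdots + v_{d-1} n^{d-1} + x_d n^d) \right|.
\]
This is exactly the object controlled by Theorem~\ref{thm:A} with $\bphi = (T, T^2, \ldots, T^d)$, $k = 1$, $\vx = (x_d)$ and $\vy = (v_1, \ldots, v_{d-1})$: the leading coordinate $x_d$ is prescribed (and we want the bound for almost all $x_d$), while the lower coordinates play the role of $\vy$, over which we take a supremum.

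Next I would check the hypotheses of Theorem~\ref{thm:A} in this configuration. The Wronskian of $(T, T^2, \ldots, T^d)$ is a nonzero constant multiple of a power of $T$, hence does not vanish identically. The condition $\min_{k < j \le d} \deg \varphi_j = 1$ holds because with $k = 1$ the polynomials attached to $\vy$ are $\varphi_2 = T^2, \ldots, \varphi_d = T^d$ — wait, none of these is linear. So one must instead reindex: take $\bphi$ so that the linear polynomial $T$ is attached to $\vy$ and $T^d$ is attached to $\vx$; that is, apply Theorem~\ref{thm:A} with $\varphi_1 = T^d$ (attached to $\vx$, so $x_1 = x_d$) and $\varphi_2 = T^{?}, \ldots$ chosen so that $\varphi_d = T$ is attached to $\vy$. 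Concretely, order the exponents as $(d, 2, 3, \ldots, d-1, 1)$, so $k = 1$, $\sigma_k(\bphi) = 2 + 3 + \cdots + (d-1) + 1 = \tfrac{d(d-1)}{2} - (d-1) + 1 = \tfrac{(d-1)(d-2)}{2} + 1$. Then Theorem~\ref{thm:A} gives the exponent
\[
\Gamma_{YL}(\bphi, 1) = \frac12 + \frac{\sigma_1(\bphi)}{2 s(d)} = \frac12 + \frac{(d-1)(d-2)/2 + 1}{d(d+1)}.
\]
A short computation: $(d-1)(d-2)/2 + 1 = (d^2 - 3d + 4)/2$, so $\Gamma_{YL} = \frac12 + \frac{d^2 - 3d + 4}{2d(d+1)} = \frac{d(d+1) + d^2 - 3d + 4}{2d(d+1)} = \frac{2d^2 - 2d + 4}{2d(d+1)} = \frac{d^2 - d + 2}{d(d+1)}$. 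And $1 - \frac{1}{d+1} = \frac{d}{d+1} = \frac{d^2}{d(d+1)}$, so I would need $d^2 - d + 2 \le d^2$, i.e.\ $d \ge 2$ — which holds with equality... no, $-d + 2 \le 0$ iff $d \ge 2$, giving $\Gamma_{YL} \le 1 - 1/(d+1)$ for $d \ge 2$, with room to spare for $d \ge 3$. (For $d = 2$ one gets exactly $2/3 = 1 - 1/3$.) So the reindexed application of Theorem~\ref{thm:A} yields precisely the claimed exponent $1 - 1/(d+1)$, possibly with a better exponent that we simply weaken to the stated clean bound.

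The main obstacle — and the step requiring genuine care rather than routine algebra — is making the supremum over $M \in \Z$ harmless. The almost-all statement is in $x_d$ only, and after the shift the lower coefficients $v_1, \ldots, v_{d-1}$ depend on $M$; but since Theorem~\ref{thm:A} already delivers a bound that is \emph{uniform over all} $\vy \in \T_{d-1}$, the dependence of $(v_1, \ldots, v_{d-1})$ on $M$ is automatically absorbed: for each fixed $x_d$ in the full-measure set provided by Theorem~\ref{thm:A}, the supremum over $\T_{d-1}$ already dominates every choice arising from every $M$. One should also note that the weight sequence here is trivial, $a_n \equiv 1$, so the hypothesis $a_n = n^{o(1)}$ is satisfied, and the leading phase factor $\e(\xi)$ has modulus $1$ and drops out. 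Thus the only real content is the index-reordering bookkeeping and the verification that $\sigma_k(\bphi)$ is small enough for $d \ge 2$, after which the result follows immediately from Theorem~\ref{thm:A}.
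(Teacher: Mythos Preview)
Your approach is exactly the paper's: shift $n\mapsto n+M$, observe the leading coefficient $x_d$ survives, and apply Theorem~\ref{thm:A} with $k=1$, $\varphi_1(T)=T^d$, and the remaining monomials $T,\ldots,T^{d-1}$ attached to $\vy$ (so that $\min_{j>1}\deg\varphi_j=1$ holds); the supremum over $M$ is indeed absorbed by the supremum over $\vy\in\T_{d-1}$.

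There is, however, an arithmetic slip in your evaluation of $\sigma_1(\bphi)$. With $\varphi_1=T^d$ and $\{\varphi_2,\ldots,\varphi_d\}=\{T,T^2,\ldots,T^{d-1}\}$ one has
\[
\sigma_1(\bphi)=1+2+\cdots+(d-1)=\frac{d(d-1)}{2},
\]
not $(d-1)(d-2)/2+1$. Plugging the correct value into $\Gamma_{YL}(\bphi,1)=\tfrac12+\sigma_1(\bphi)/(2s(d))$ gives
\[
\Gamma_{YL}(\bphi,1)=\frac12+\frac{d(d-1)/2}{d(d+1)}=\frac12+\frac{d-1}{2(d+1)}=\frac{d}{d+1}=1-\frac{1}{d+1},
\]
so Theorem~\ref{thm:A} delivers \emph{exactly} the stated exponent, with no room to spare for $d\ge 3$. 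Your conclusion is unaffected, but the intermediate claim that the bound could be sharpened is an artefact of the miscomputed sum.
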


From Theorem~\ref{thm:weyl short} we immediately derive that for almost all $\vu\in \Tor$ one has  
$$
\sup_{M \in \Z} \left |S_d(\vu; M,N)  \right | \le N^{1-1/(d+1)  +o(1)}, \qquad N\to \infty.
$$

Note that using the bound~\eqref{eq:individual} 
 and a similar  observation about the leading  
coefficient of shifted polynomials,   one obtains a version of  Theorem~\ref{thm:weyl short}  with the exponent 
$$
1- 1/d(d-1)  
 >  1-1/(d+1), \qquad d \ge 3.
$$

\begin{remark}
\label{rem:compare}
The bounds of Theorem~\ref{thm:A} and Corollaries~\ref{cor:B} and~\ref{cor:C}
are typically stronger than that of  Theorem~\ref{thm:general}. However in the case when  $\min_{j=1, \ldots, d} \deg \varphi_j \ge 2$
but  $W(T;\widehat \bphi)= 0$  this is the only result at our disposal.  
\end{remark}

\subsection{Results for arbitrary orthogonal projections of $\vu$:  a new point of view}

We now consider other projections which seems to be a new scenario which has not been
studied in the literature prior to this work.  
 
We need to introduce some notation first. 

Let $\cG(d, k)$ denote the collections of all the $k$-dimensional  linear subspaces of $\R^d$. For $\cV\in \cG(d, k)$,  let $\pi_\cV:~\R^{d} \to \cV$ denote the orthogonal projection onto $\cV$.
For  $0<\alpha<1$, we consider the set
$$
\cE_{\va, \bphi,\alpha}=\{\vu\in \Tor:~|T_{\va, \bphi }( \vu; N)|\ge N^{\alpha} \text{ for infinity many }N\in \N\}.
$$
We also use $\lambda\(\cS\)$ to denote the  Lebesgue measure of $\cS \subseteq \Tor$ (and also for sets in 
other spaces). 

We are interested in the following apparently new  point of view:

\begin{question}
\label{quest:proj}  Given $\bphi \in \Z[T]^d$, for what $\alpha$ 
we have $\lambda (\pi_\cV (\cE_{\va, \bphi,\alpha}))=0$ for all $\cV\in \cG(d, k)$? 
\end{question}

We now see that Theorem~\ref{thm:general} implies that for $\bphi \in \Z[T]^d$ is as in Theorem~\ref{thm:general}
and $a_n=n^{o(1)}$,  for any   
$$
\alpha>\frac{1}{2}+\frac{2\sigma_k(\bphi)+d-k}{2d^2+4d-2k},
$$
we have 
$$
\lambda (\pi_{d, k} (\cE_{\va, \bphi,\alpha}))=0,
$$
where $\pi_{d, k}$ is the orthogonal projection of $\Tor$ onto $\T_k$, that is, 
\begin{equation} 
\label{eq:pidk}
\pi_{d, k}:~(u_1, \ldots, u_d) \to (u_1, \ldots, u_k).
\end{equation}

For the degree sequence  $\deg \varphi_1, \ldots, \deg \varphi_d$ we denote them as 
\begin{equation} 
\label{eq:degree}
r_1\le \ldots\le r_d,
\end{equation}
and define 
\begin{equation}
\label{eq:wsigma}
\widetilde{ \sigma}_k(\bphi)=\sum_{i=k+1}^d r_i.
\end{equation}

We remark that  the following  result is  similar to the result of Theorem~\ref{thm:general},  with the change of $\widetilde{ \sigma}_k(\bphi)$ only.

\begin{theorem}
\label{thm:B} 
Suppose that $\bphi \in \Z[T]^d$ is such that 
  the Wronskian  $W(T;\bphi)$ 
does not vanish identically and $\widetilde{ \sigma}_k(\bphi)<d(d+1)/2$, then for any $\cV\in \cG(d, k)$ one has
$$
\lambda (\pi_\cV (\cE_{\va, \bphi,\alpha}))=0
$$
provided that $\alpha> \widetilde{\Gamma}(\bphi, k)$
where 
$$
\widetilde{\Gamma}(\bphi, k)=\frac{1}{2} + \frac{2 \widetilde{\sigma}_k(\bphi)+d-k}{2d^2+4d-2k}.
$$
\end{theorem}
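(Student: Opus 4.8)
The plan is to reduce the statement about an arbitrary orthogonal projection $\pi_\cV$ to the already–established coordinate projection case of Theorem~\ref{thm:general}, by exploiting the freedom to relabel and recombine the polynomials $\varphi_1,\dots,\varphi_d$. First I would fix $\cV\in\cG(d,k)$ and observe that $\pi_\cV$ factors through a linear change of coordinates: choose an orthonormal basis of $\R^d$ adapted to $\cV$, so that in the new coordinates $\pi_\cV$ becomes the standard coordinate projection onto the first $k$ coordinates, i.e.\ $\pi_{d,k}$ as in~\eqref{eq:pidk}. Under an invertible linear substitution $\vu\mapsto A\vu$ applied to the coefficient vector, the exponential sum $T_{\va,\bphi}(\vu;N)=\sum_n a_n\e\(\sum_j u_j\varphi_j(n)\)$ transforms into a sum of the same shape but with the polynomials $\varphi_j$ replaced by suitable $\R$-linear combinations $\psi_i=\sum_j A_{ij}\varphi_j$. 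The key algebraic point is that the Wronskian $W(T;\bpsi)$ equals $\det(A)\cdot W(T;\bphi)$, hence is still not identically zero; this is exactly the invariance property needed so that Theorem~\ref{thm:general} still applies to the transformed family. The one genuine difficulty is that the transformed polynomials $\bpsi$ need not lie in $\Z[T]^d$ and need not have the ``nice'' individual degrees; but Theorem~\ref{thm:general} is robust under a rescaling of coefficients (the set of bad $\vx$ has measure zero regardless) and the degrees only enter through $\widetilde\sigma_k$, which I discuss next.

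The second, more substantive step is bookkeeping of degrees. After passing to coordinates adapted to $\cV$, the projection $\pi_\cV(\cE_{\va,\bphi,\alpha})$ has measure zero as soon as the corresponding ``tail'' quantity $\sigma_k(\bpsi)=\sum_{i=k+1}^d\deg\psi_i$ is small enough, and the resulting exponent is $\tfrac12+\tfrac{2\sigma_k(\bpsi)+d-k}{2d^2+4d-2k}$. But $\sigma_k(\bpsi)$ depends on which $d-k$ of the transformed polynomials get projected away, and a generic linear combination $\psi_i=\sum_j A_{ij}\varphi_j$ has degree equal to $\max_j\deg\varphi_j$. To get the sharpest bound we want the $d-k$ projected-away polynomials to carry the \emph{smallest possible} total degree. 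Here is where the ordering~\eqref{eq:degree} and the definition~\eqref{eq:wsigma} of $\widetilde\sigma_k(\bphi)=\sum_{i=k+1}^d r_i$ enter: by a suitable (generic) choice of the adapted basis we can arrange the complementary $(d-k)$-dimensional subspace so that $T_{\va,\bpsi}$ restricted to the projected-away coordinates is driven by linear combinations of the $d-k$ lowest-degree $\varphi_j$'s, giving $\sigma_k(\bpsi)=\widetilde\sigma_k(\bphi)$. More precisely I would argue: among the $d$ polynomials, pick the $d-k$ of smallest degrees $r_1\le\cdots\le r_{d-k}$; complete them to an $\R$-basis of the span of $\varphi_1,\dots,\varphi_d$ (viewed inside a polynomial space); and choose $\cV^\perp$ so that the projection corresponds exactly to reading off the first $d-k$ coordinates in this basis. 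The Wronskian non-vanishing guarantees linear independence so that this is possible, and it is precisely the hypothesis $\widetilde\sigma_k(\bphi)<d(d+1)/2=s(d)$ that keeps the exponent below the trivial bound $1$.

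Assembling the two steps: for the fixed $\cV$, after the coordinate change we have a family $\bpsi$ with non-vanishing Wronskian and $\sigma_k(\bpsi)=\widetilde\sigma_k(\bphi)<s(d)$, the weights still satisfy $a_n=n^{o(1)}$, and Theorem~\ref{thm:general} (in the formulation via $\cE_{\va,\bpsi,\alpha}$ and the coordinate projection $\pi_{d,k}$, as spelled out in the paragraph following Question~\ref{quest:proj}) yields $\lambda(\pi_{d,k}(\cE_{\va,\bpsi,\alpha}))=0$ whenever $\alpha>\tfrac12+\tfrac{2\widetilde\sigma_k(\bphi)+d-k}{2d^2+4d-2k}=\widetilde\Gamma(\bphi,k)$. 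Translating back through the linear isomorphism — which carries $\cE_{\va,\bphi,\alpha}$ to $\cE_{\va,\bpsi,\alpha}$ and $\pi_\cV$ to $\pi_{d,k}$, and maps Lebesgue-null sets to Lebesgue-null sets since it is a fixed linear bijection — gives $\lambda(\pi_\cV(\cE_{\va,\bphi,\alpha}))=0$, as required. I expect the only delicate point to be justifying the ``generic basis'' choice in step two rigorously, i.e.\ checking that one can always realize $\sigma_k(\bpsi)=\widetilde\sigma_k(\bphi)$ rather than something larger; this is where the Wronskian hypothesis is used in an essential way, and it may require a short lemma on when a sub-collection of $d-k$ of the $\varphi_j$ extends to a basis compatible with the prescribed projection.
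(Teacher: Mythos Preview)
Your reduction has a genuine gap in the degree-bookkeeping step. Once $\cV\in\cG(d,k)$ is fixed, the orthogonal complement $\cV^\perp$ is uniquely determined --- you cannot ``choose $\cV^\perp$ so that the projection corresponds to reading off the first $d-k$ coordinates'' in a basis of your liking. The only freedom is in selecting a \emph{basis} of the already-given $\cV^\perp$, and for a generic $\cV$ there is no reason such a basis can be taken to involve only the $d-k$ lowest-degree $\varphi_j$'s (that would force $\cV^\perp$ to be one particular coordinate subspace). There is also an internal inconsistency: if the projected-away $\psi_i$ really were combinations of the $d-k$ lowest-degree $\varphi_j$'s, their total degree would be $r_1+\cdots+r_{d-k}$, not $\widetilde\sigma_k(\bphi)=r_{k+1}+\cdots+r_d$. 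A repair along your lines is possible --- row-reduce an arbitrary basis of $\cV^\perp$ so that the rows have distinct leading indices $j_1<\cdots<j_{d-k}$ in $\{1,\ldots,d\}$, whence $\deg\psi_i\le r_{j_i}$ and $\sigma_k(\bpsi)\le\sum_i r_{j_i}\le r_{k+1}+\cdots+r_d=\widetilde\sigma_k(\bphi)$ --- but this is not what you argued, and you would still have to contend with $\bpsi\notin\Z[T]^d$, so Lemma~\ref{lem:Wooley} and hence Theorem~\ref{thm:general} do not apply as stated.

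The paper avoids all of this by taking a different, purely geometric route. It keeps the original $\bphi$ and the box-counting of Lemma~\ref{lem:counting} unchanged, and supplies a separate projection estimate (Lemma~\ref{lem:geometric}): for a box with side lengths $h_1\ge\cdots\ge h_d$, the orthogonal projection onto \emph{any} $\cV\in\cG(d,k)$ has $k$-dimensional measure $\ll\prod_{i=1}^k h_i$. Since the boxes in~\eqref{eq:zetaj} have side lengths $\zeta_j$ with $\zeta_j^{-1}\asymp N^{e_j+1+\eps-\alpha}$, discarding the $d-k$ \emph{smallest} side lengths corresponds precisely to discarding the $d-k$ \emph{largest} degrees, and this is exactly how $\widetilde\sigma_k(\bphi)$ enters Corollary~\ref{cor:general-p}, uniformly over all $\cV$. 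The Borel--Cantelli argument of Theorem~\ref{thm:general} then carries over verbatim with Corollary~\ref{cor:general-p} in place of Corollary~\ref{cor:Markov}.
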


We now consider Question~\ref{quest:proj} in  the classical case~\eqref{eq:classical} and the sums~\eqref{eq:Sd}. 
That is, we study the  following set
$$
\cE_{d, \alpha}=\{\vu \in \Tor:~|S_d(\vu; N)|\ge N^{\alpha} \text{ for infinity many } N\in \N\},
$$
which we define for  $0<\alpha<1$ and  integer $d\ge 2$. Note that in this setting we have  $\widetilde{\sigma}_k(\bphi)=(d+k+1)(d-k)/2$.

\begin{cor}
\label{cor:weyl-p}
For any $\cV\in \cG(d, k)$ one has 
$$
\lambda\(\pi_\cV \(\cE_{d, \alpha}\)\)=0
$$
provided that $\alpha>\widetilde{\Gamma}_{d,k} $
where
$$
\widetilde{\Gamma}_{d,k} = \frac{1}{2} + \frac{(d-k)(d+k+2)}{2d^2+4d-2k}.
$$
\end{cor}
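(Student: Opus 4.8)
\textbf{Proof proposal for Corollary~\ref{cor:weyl-p}.}
The plan is to specialise Theorem~\ref{thm:B} to the classical setting~\eqref{eq:classical}, where $\bphi = (T, T^2, \ldots, T^d)$ and $a_n = 1$ for all $n$. First I would check the hypotheses of Theorem~\ref{thm:B}. The Wronskian $W(T;\bphi)$ for the monomials $T, T^2, \ldots, T^d$ is a nonzero constant multiple of a power of $T$ (after the usual Vandermonde-type evaluation), hence does not vanish identically; and $a_n = 1 = n^{o(1)}$ is an admissible weight. So the only genuine arithmetic input to verify is the condition $\widetilde{\sigma}_k(\bphi) < d(d+1)/2$. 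Here the degree sequence is $r_i = i$ for $i = 1, \ldots, d$, so by~\eqref{eq:wsigma} we have $\widetilde{\sigma}_k(\bphi) = \sum_{i=k+1}^d i = (d-k)(d+k+1)/2$. An elementary comparison shows $(d-k)(d+k+1)/2 < d(d+1)/2$ for every $1 \le k \le d$ (equality fails since $k \ge 1$), so the hypothesis holds.

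Next I would simply substitute $\widetilde{\sigma}_k(\bphi) = (d-k)(d+k+1)/2$ into the exponent $\widetilde{\Gamma}(\bphi,k)$ from Theorem~\ref{thm:B}. We get
$$
\widetilde{\Gamma}(\bphi,k) = \frac12 + \frac{2 \cdot \frac{(d-k)(d+k+1)}{2} + (d-k)}{2d^2 + 4d - 2k} = \frac12 + \frac{(d-k)(d+k+1) + (d-k)}{2d^2+4d-2k} = \frac12 + \frac{(d-k)(d+k+2)}{2d^2+4d-2k},
$$
which is exactly $\widetilde{\Gamma}_{d,k}$ as defined in the statement. Since in the classical case $T_{\va,\bphi}(\vu;N) = S_d(\vu;N)$, the set $\cE_{\va,\bphi,\alpha}$ coincides with $\cE_{d,\alpha}$, and the conclusion $\lambda(\pi_\cV(\cE_{d,\alpha})) = 0$ for every $\cV \in \cG(d,k)$ and every $\alpha > \widetilde{\Gamma}_{d,k}$ follows immediately from Theorem~\ref{thm:B}.

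There is essentially no obstacle here: the corollary is a direct specialisation, and the only things to be careful about are the arithmetic of the two closed-form identities (the value of $\widetilde{\sigma}_k$ and the algebraic simplification of $\widetilde{\Gamma}$) and the trivial verifications that the Wronskian is nonvanishing and that the degree-sum inequality holds — all of which are routine. The substantive work has already been done in establishing Theorem~\ref{thm:B}.
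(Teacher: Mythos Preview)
Your proposal is correct and follows exactly the paper's own route: the corollary is stated as an immediate specialisation of Theorem~\ref{thm:B} to the classical system~\eqref{eq:classical}, with the paper noting just before the statement that $\widetilde{\sigma}_k(\bphi)=(d+k+1)(d-k)/2$ in this setting. Your verification of the Wronskian condition, the inequality $\widetilde{\sigma}_k(\bphi)<d(d+1)/2$, and the algebraic simplification yielding $\widetilde{\Gamma}_{d,k}$ are all exactly what is needed.
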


We remark that the orthogonal projection of sets is a fundamental topic in fractal geometry and geometric measure theory. Recall the classical {\it Marstrand--Mattila projection theorem\/}: Let $\cA \subseteq \R^{d}$, $d\geq2,$ be a Borel set with Hausdorff dimension $s$,  see~\cite[Chapter~5]{Mattila2015}  for more details and related definitions.  Then we have:

\begin{itemize}
\item  {\it Dimension part:} If $s\leq k$, then the orthogonal projection of $\cA$ onto almost all $k$-dimensional subspaces has Hausdorff dimension $s$.

\item {\it Measure part:} If $s>k$, then the orthogonal  projection of $\cA$ onto almost all $k$-dimensional subspaces has positive $k$-dimensional Lebesgue measure. 
\end{itemize}

From the Marstrand--Mattila projection theorem and Corollary~\ref{cor:weyl-p} we obtain the following results. For $\cA \subseteq \R^d$ we use $\dim \cA$ to denote the Hausdorff dimension of $\cA$.

\begin{cor}
\label{cor: Ed k}
Let   $k, d$ be two integers with $1\le k<d$ and $d\ge 2$. Then $\dim \cE_{d, \alpha}\le k$ 
for any  
$$
\alpha> \frac{1}{2}+\frac{(d-k)(d+k+2)}{2d^2+4d-2k}.
$$
\end{cor}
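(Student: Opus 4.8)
\textbf{Proof plan for Corollary~\ref{cor: Ed k}.}
The plan is to deduce this statement directly from Corollary~\ref{cor:weyl-p} together with the \emph{dimension part} of the Marstrand--Mattila projection theorem, arguing by contradiction. Fix integers $1\le k<d$ and a value
$$
\alpha> \frac{1}{2}+\frac{(d-k)(d+k+2)}{2d^2+4d-2k} = \widetilde{\Gamma}_{d,k}.
$$
Suppose, towards a contradiction, that $\dim \cE_{d,\alpha} > k$. Since $\cE_{d,\alpha}\subseteq \Tor \subseteq \R^d$ is a Borel set (it is a countable intersection over thresholds of sets defined by the continuous functions $\vu\mapsto |S_d(\vu;N)|$, hence Borel), we may pass to a Borel subset $\cA\subseteq \cE_{d,\alpha}$ with $k<\dim\cA=s\le d$; in fact it suffices to keep $\cA=\cE_{d,\alpha}$ itself. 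Because $s>k$, the \emph{measure part} of Marstrand--Mattila (or already the dimension part, which forces projections of dimension $>k$ to have positive $k$-dimensional measure in the borderline reading used here) tells us that for almost all $\cV\in\cG(d,k)$ the projection $\pi_\cV(\cE_{d,\alpha})$ has positive $k$-dimensional Lebesgue measure, i.e.\ $\lambda(\pi_\cV(\cE_{d,\alpha}))>0$ for a set of $\cV$ of positive measure in $\cG(d,k)$. In particular there exists at least one $\cV\in\cG(d,k)$ with $\lambda(\pi_\cV(\cE_{d,\alpha}))>0$.

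This, however, contradicts Corollary~\ref{cor:weyl-p}, which asserts that for the given range of $\alpha$ we have $\lambda(\pi_\cV(\cE_{d,\alpha}))=0$ for \emph{every} $\cV\in\cG(d,k)$. Hence the assumption $\dim\cE_{d,\alpha}>k$ is untenable, and $\dim\cE_{d,\alpha}\le k$, as claimed. One should note that Corollary~\ref{cor:weyl-p} is exactly the instance of Corollary~\ref{cor:weyl-p} applied to the classical tuple $\bphi=(T,\ldots,T^d)$ with weights $a_n=1$, for which $\widetilde\sigma_k(\bphi)=(d+k+1)(d-k)/2<d(d+1)/2$, so the hypothesis $\widetilde\sigma_k(\bphi)<d(d+1)/2$ of Theorem~\ref{thm:B} is satisfied and the threshold $\widetilde\Gamma(\bphi,k)$ specialises to $\widetilde\Gamma_{d,k}$; thus the input we need is genuinely available.

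The only genuinely delicate point is the logical direction in which Marstrand--Mattila is invoked: the theorem as stated gives conclusions about \emph{almost every} subspace from a dimension/measure hypothesis, whereas we want to run it contrapositively — "if the projection is null for \emph{every} $\cV$, then the dimension is at most $k$." This is legitimate precisely because "positive $k$-measure for a.e.\ $\cV$" in particular implies "positive $k$-measure for \emph{some} $\cV$", so a single null projection already suffices to refute $\dim>k$; no uniformity over $\cV$ beyond Corollary~\ref{cor:weyl-p} is needed, and Corollary~\ref{cor:weyl-p} provides exactly that uniformity (it holds for all $\cV$). A secondary, routine point is confirming Borel measurability of $\cE_{d,\alpha}$: writing
$$
\cE_{d,\alpha}=\bigcap_{M=1}^{\infty}\ \bigcup_{N\ge M}\ \{\vu\in\Tor:\ |S_d(\vu;N)|\ge N^{\alpha}\},
$$
each inner set is closed (preimage of a closed set under a continuous map), so $\cE_{d,\alpha}$ is an $F_{\sigma\delta}$ set, hence Borel, and Marstrand--Mattila applies. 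With these observations in place the proof is immediate.
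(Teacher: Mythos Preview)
Your argument is correct and matches the paper's approach exactly: the paper simply states that Corollary~\ref{cor: Ed k} follows from Corollary~\ref{cor:weyl-p} together with the Marstrand--Mattila projection theorem, and your contrapositive application of the \emph{measure part} (if $\dim\cE_{d,\alpha}>k$ then $\lambda(\pi_\cV(\cE_{d,\alpha}))>0$ for almost every $\cV$, contradicting Corollary~\ref{cor:weyl-p}) is precisely the intended one-line deduction. The only cosmetic wrinkle is that you open by naming the ``dimension part'' but then correctly invoke the measure part; the latter is indeed what is needed, and your Borel-measurability check for $\cE_{d,\alpha}$ is a welcome detail the paper leaves implicit.
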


In particular,  taking $k=d-1$ we obtain  
\begin{cor}
\label{cor: Ed d1}
For any integer $d\ge 2$ one has $\dim \cE_{d, \alpha}\le d-1$ for any 
$$
\alpha> \frac{1}{2}+\frac{2d+1}{2d^2+2d+2}.
$$ 
\end{cor}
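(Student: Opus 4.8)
The plan is to derive Corollary~\ref{cor: Ed d1} as the special case $k = d-1$ of Corollary~\ref{cor: Ed k}, so the proof is essentially a one-line substitution followed by an elementary algebraic simplification. First I would invoke Corollary~\ref{cor: Ed k} with $k = d-1$: since $1 \le d-1 < d$ holds for every integer $d \ge 2$, the hypotheses are satisfied, and we conclude $\dim \cE_{d, \alpha} \le d-1$ for any
$$
\alpha > \frac{1}{2} + \frac{(d-k)(d+k+2)}{2d^2 + 4d - 2k}
$$
with $k = d-1$.

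Next I would carry out the substitution in the fractional term. In the numerator, $d - k = d - (d-1) = 1$ and $d + k + 2 = d + (d-1) + 2 = 2d + 1$, so $(d-k)(d+k+2) = 2d+1$. In the denominator, $2d^2 + 4d - 2k = 2d^2 + 4d - 2(d-1) = 2d^2 + 2d + 2$. Hence the threshold becomes
$$
\frac{1}{2} + \frac{2d+1}{2d^2 + 2d + 2},
$$
which is exactly the exponent in the statement. Therefore $\dim \cE_{d,\alpha} \le d-1$ whenever $\alpha$ exceeds this value, completing the proof.

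There is essentially no obstacle here: the only thing to watch is that the edge case $k = d-1$ is admissible in Corollary~\ref{cor: Ed k}, which it is for all $d \ge 2$ (the constraint there is $1 \le k < d$, and $d-1 \ge 1$ precisely when $d \ge 2$). All the substantive work — the passage from the measure statement Corollary~\ref{cor:weyl-p} through the Marstrand--Mattila projection theorem to the dimension bound Corollary~\ref{cor: Ed k} — has already been done upstream, so this final corollary is purely a specialisation. I would present it in two or three sentences without further elaboration.
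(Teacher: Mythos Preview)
Your proposal is correct and matches the paper's own approach exactly: the paper introduces Corollary~\ref{cor: Ed d1} with the phrase ``In particular, taking $k=d-1$ we obtain'', so it too is just the specialisation $k=d-1$ of Corollary~\ref{cor: Ed k} with the same algebraic simplification you carry out.
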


We note that the authors~\cite{ChSh2}  showed that  for any $\alpha \in (1/2, 1)$ one has  
\begin{equation}
\label{eq:upperbound}
\dim \cE_{d, \alpha}\le u(d, \alpha)
\end{equation}
 with some explicit function $u(d, \alpha)<d$. Moreover the function 
 $$
 u(d, \alpha)\rightarrow 0 \quad \text{as} \quad \alpha\rightarrow 1.
 $$
However the exact comparison between  the bound $u(d, \alpha)$ and that of  Corollary~\ref{cor: Ed k} is not immediately 
 obvious.

We remark that the authors~\cite{ChSh} have obtained a  lower bound of the Hausdorff dimension of $\cE_{d, \alpha}$. Among other things, it is shown in~\cite{ChSh} that for any $d\ge 2$ and $\alpha\in (0,1)$ one has 
$$
\dim \cE_{d, \alpha}\ge \xi(d, \alpha)
$$
with some explicit function $\xi(d, \alpha)>0$. As a counterpart to~\eqref{eq:upperbound}, we remark 
that we expect $\dim \cE_{d, \alpha} = d$ for $\alpha\in (0,1/2)$, see also~\cite{ChSh, ChSh3}. On the other hand, we do not have any 
plausible conjecture about the exact behaviour of  $\dim \cE_{d, \alpha}$ for $\alpha\in [1/2, 1)$.

\begin{remark} 
In principle,  one can obtain various analogues
 of Theorem~\ref{thm:A} and  Corollaries~\ref{cor:B} and~\ref{cor:C} for the arbitrary projections. However they require imposing some additional
 (and rather cluttered)  restrictions 
 on linear combinations of components of $\bphi$.  We omit these similar but more involuted arguments for this setting. 
\end{remark}

\subsection{Uniform distribution modulo one} 
\label{sec:u.d.}
Let $\xi_n$, $n\in \N$,  be a sequence in $[0,1)$. The {\it discrepancy\/}  of this sequence at length $N$ is defined as 
\begin{equation}
\label{eq:Discr}
D_N = \sup_{0\le a<b\le 1} \left |  \#\{1\le n\le N:~\xi_n\in (a, b)\} -(b-a) N \right |.
\end{equation} 
We note that sometimes in the literature the scaled quantity $N^{-1}D_N $ is   called the discrepancy, but 
since our argument looks cleaner with the definition~\eqref{eq:Discr}, we adopt it here. 

For $\vx\in \T_k$, $\vy\in \T_{d-k}$ we consider the  sequence 
$$
\sum_{j=1}^{k} x_j \varphi_j(n) +\sum_{j=1}^{d-k} y_j \varphi_{k+j}(n), \qquad n\in \N,
$$ 
and for each $N$ we denote by  $D_{\bphi}(\vx, \vy; N)$ the corresponding discrepancy of its fractional parts.

Wooley~\cite[Theorem~1.4]{Wool3} has proved that ($d\ge 3$) for  almost all $\vx\in \T_k$ with $1\le k\le d-1$  
one has 
$$
\sup_{\vy\in \T_{d-k}}D_{\bphi}(\vx, \vy; N) \leq  N^{\gamma_\ast(\bphi, k)+o(1)},  \qquad N \to  \infty,
$$
where 
$$
\gamma_\ast(\bphi, k)=\frac{1}{2}+\frac{d-k+2\sigma_k(\bphi)+2}{2d^2+4d-2k+4}.
$$
We improve this bound as follows.

\begin{theorem}
\label{thm:general-D} Suppose that $\bphi \in \Z[T]^d$ is such that 
  the Wronskian  $W(T;\bphi)$ 
does not vanish identically and $\sigma_k(\bphi)<d(d+1)/2$, then  for almost all $\vx\in \T_k$  one has   
$$
\sup_{\vy\in \T_{d-k}}D_{\bphi}(\vx, \vy; N) \leq  N^{\gamma(\bphi, k)+o(1)} , \qquad   N \to \infty,
$$
where 
$$
\gamma(\bphi, k)=\frac{1}{2}+\frac{d-k+2\sigma_k(\bphi)+1}{2d^2+4d-2k+2}.
$$
\end{theorem}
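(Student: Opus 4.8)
The plan is to deduce the discrepancy bound from the exponential-sum bound of Theorem~\ref{thm:general} by the completion (Erd\H os--Tur\'an) method of Lemma~\ref{lem:control}. Writing $\xi_n=\sum_{j=1}^k x_j\varphi_j(n)+\sum_{j=1}^{d-k}y_j\varphi_{k+j}(n)$ and noting that $\e(h\xi_n)$ has phase $\sum_j (hx_j)\varphi_j(n)+\sum_j(hy_j)\varphi_{k+j}(n)$, completion gives, for every integer $H\ge 1$ and every $\vy$,
\[
D_{\bphi}(\vx,\vy;N)\ll \frac{N}{H}+\sum_{h=1}^{H}\frac1h\bigl|T_{\bphi}(h\vx,h\vy;N)\bigr|,
\]
whence $\sup_{\vy}D_{\bphi}(\vx,\vy;N)\ll N/H+\sum_{h\le H}h^{-1}\sup_{\vy}|T_{\bphi}(h\vx,h\vy;N)|$. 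The key structural observation is that the map $\vx\mapsto h\vx\bmod 1$ on $\T_k$ is measure preserving and that $h\vy\bmod 1$ sweeps $\T_{d-k}$ as $\vy$ does, so $\sup_{\vy}|T_{\bphi}(h\vx,h\vy;N)|=\sup_{\vy}|T_{\bphi}(h\vx,\vy;N)|$; thus the whole family $\{T_{\va,\bphi}(h\vx,h\vy;N)\}_h$ is, for the purposes of $\vx$-averages and of the supremum in $\vy$, a dilated copy of the single family controlled in the proof of Theorem~\ref{thm:general}.

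First I would recall the mean-value input that powers Theorem~\ref{thm:general}, namely a bound of the shape
\[
\int_{\T_k}\sup_{\vy\in\T_{d-k}}\bigl|T_{\va,\bphi}(\vx,\vy;M)\bigr|^{2s}\,d\vx\le M^{2s\Gamma(\bphi,k)+o(1)}
\]
for a suitable fixed $s$ (this is exactly how the exponent $\Gamma(\bphi,k)$ arises, via Chebyshev and Borel--Cantelli, and the hypothesis $\sigma_k(\bphi)<d(d+1)/2$ is what makes it nontrivial). Combining this with Jensen's inequality for the weights $1/h$ and the dilation invariance above yields $\int_{\T_k}\bigl(\sum_{h\le H}h^{-1}\sup_{\vy}|T_{\bphi}(h\vx,h\vy;M)|\bigr)^{2s}d\vx\ll (\log H)^{2s}M^{2s\Gamma(\bphi,k)+o(1)}$. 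I would then fix a dyadic scale $M$, restrict to integers $N\in[M,2M]$, apply Chebyshev to this inequality, and take a union bound over all such $N$ and over $h\le H$, using the crude continuity estimate $|T_{\bphi}(\vx,\vy;N)-T_{\bphi}(\vx,\vy;M)|\le (N-M)M^{o(1)}$ of Lemma~\ref{lem:con gen} to pass from a sufficiently fine net of lengths to all $N\in[M,2M]$; Borel--Cantelli over dyadic $M$ then produces, for almost all $\vx$, a bound $\sum_{h\le H}h^{-1}\sup_{\vy}|T_{\bphi}(h\vx,h\vy;N)|\le N^{\gamma+o(1)}$ valid for all large $N$, provided $\gamma$ is large enough that the $O(M)$-fold union-bound loss is beaten by the room in the mean-value estimate. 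A short optimization in $s$ and in the mesh size identifies the resulting threshold as exactly $\gamma(\bphi,k)=\tfrac12+\tfrac{d-k+2\sigma_k(\bphi)+1}{2d^2+4d-2k+2}$, i.e.\ the exponent $\Gamma(\bphi,k)$ degraded by the standard cost of a single completion step. Finally I would choose $H=\lceil N^{1-\gamma(\bphi,k)}\rceil$ to balance $N/H$ against this bound, which completes the estimate for $\sup_{\vy}D_{\bphi}(\vx,\vy;N)$.

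The main obstacle is precisely the uniformity bookkeeping in the previous paragraph: Theorem~\ref{thm:general} supplies, for each fixed $h$, a null set of ``bad'' $\vx$, whereas completion requires a single null set that works simultaneously for all $h\le H$ with $H$ a positive power of $N$ and for all lengths $N$, and it is the interaction of these two union bounds with the available slack in the Vinogradov mean value theorem that pins the exponent down; this is the step at which the present argument refines Wooley's passage from sums to discrepancy and yields $\gamma(\bphi,k)<\gamma_\ast(\bphi,k)$ (the inequality being equivalent to the assumed $\sigma_k(\bphi)<d(d+1)/2$). Two routine points are dispatched along the way: the supremum over the continuum $\vy\in\T_{d-k}$ is reduced, exactly as in the proof of Theorem~\ref{thm:general}, to a supremum over a polynomially fine net by the continuity Lemmas~\ref{lem:cont-gen} and~\ref{lem:cont-linear}—which is compatible with the completion step since $\sup_{\vy}$ is unaffected by the dilation $\vy\mapsto h\vy$—and the weights with $a_n=n^{o(1)}$ contribute only $M^{o(1)}$ factors throughout and so play no role.
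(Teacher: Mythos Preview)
Your overall strategy—apply Erd\H os--Tur\'an, use the dilation invariance $\sup_\vy|T_\bphi(h\vx,h\vy;N)|=\sup_\vy|T_\bphi(h\vx,\vy;N)|$ together with the measure-preservation of $\vx\mapsto h\vx$, and conclude by Borel--Cantelli—matches the paper's. But the implementation diverges at the central technical step and rests on an input the paper does not supply. You ``recall'' a moment bound
\[
\int_{\T_k}\sup_{\vy}|T_{\bphi}(\vx,\vy;M)|^{2s}\,d\vx\le M^{2s\Gamma(\bphi,k)+o(1)}
\]
and assert that this is how $\Gamma(\bphi,k)$ arises in Theorem~\ref{thm:general}. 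It is not: the paper never proves such an estimate. Theorem~\ref{thm:general} goes through the \emph{level-set} bound of Corollary~\ref{cor:Markov},
\[
\lambda\bigl(\{\vx:\sup_\vy|W_{\bphi}(\vx,\vy;N)|\ge N^\alpha\}\bigr)\le N^{s(d)(1-2\alpha)+\sigma_k(\bphi)+(d-k)(1-\alpha)+o(1)},
\]
followed directly by Borel--Cantelli. Your moment bound \emph{is} recoverable from this level-set estimate via the layer-cake formula, and precisely at $2s=2s(d)+d-k$; with that choice your net-in-$N$ argument does produce the threshold $\gamma(\bphi,k)$. So the route is salvageable, but you would have to carry out that derivation and fix the value of $s$—the vague ``optimisation in $s$'' is really the constraint that the moment exponent stays $2s\Gamma(\bphi,k)$, which fails once $2s>2s(d)+d-k$.

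The paper's argument is shorter and avoids Jensen, the moment bound, and the net in $N$ altogether. With $N_i=2^i$ and $G_i=\lfloor N_i^\eta\rfloor$ one sets $\cB_{i,g}=\{\vx:\sup_\vy|W_\bphi(g\vx,g\vy;N_i)|\ge N_i^\alpha\}$, uses Lemma~\ref{lem:invairant} to get $\lambda(\cB_{i,g})=\lambda(\cB_{i,1})$, and bounds $\lambda(\bigcup_{g\le G_i}\cB_{i,g})$ by $G_i$ times the right-hand side of Corollary~\ref{cor:Markov}. Borel--Cantelli then gives, for almost all $\vx$ and all large $i$, $\sup_\vy|W_\bphi(g\vx,g\vy;N_i)|\le N_i^\alpha$ for every $g\le G_i$. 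The whole purpose of working with $W$ (Lemma~\ref{lem:control}) is that this single dyadic bound controls $\sup_\vy|T_\bphi(g\vx,g\vy;N)|$ for \emph{all} $N\le N_i$—so no net in $N$ is needed. Erd\H os--Tur\'an with $G=G_i$ and $\eta=1-\alpha$ forces $\alpha>\gamma(\bphi,k)$. Two small corrections: the trivial estimate $|T(\cdot;N)-T(\cdot;M)|\le N-M$ has nothing to do with Lemma~\ref{lem:con gen} (that lemma concerns continuity in $\vu$, not in the length), and after your Jensen step there is no separate union over $h$ to perform.
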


For the classical choice of $\bphi$ as in~\eqref{eq:classical} we always have $\sigma_k(\bphi)<s(d)$, where $s(d)$ is given by~\eqref{eq:sq},   
and   elementary calculations show that  
$$
\gamma(\bphi, k)<\gamma_\ast(\bphi, k), \qquad k=1, \ldots, d.
$$ 
Thus, as before with Theorem~\ref{thm:general}, we see that  Theorem~\ref{thm:general-D}  gives a direct improvement and 
generalisation of the result of Wooley~\cite[Theorem~1.4]{Wool3}.  

 \begin{remark}
\label{rem:lin improve}  
It is natural to try to obtain analogues of the  bounds of  exponential sums of Theorem~\ref{thm:A} and Corollaries~\ref{cor:B} and~\ref{cor:C} for 
the discrepancy.  However our main tool, the {\it Erd\H{o}s--Tur\'{a}n inequality\/}, see Lemma~\ref{lem:ET} below, involves a growing with $N$
family of exponential sums of length $N$. So one needs some additional ideas to adjust our argument to this case. 
\end{remark}

From Theorem~\ref{thm:general-D}  we derive a bound on the discrepancy of real polynomials over short intervals.
More precisely, we now given an upper bound on $D_d(\vu;M, N)$  which  denotes the discrepancy  of the sequence  of fractional parts
$$
\{u_1n+\ldots+u_d n^d\}, \qquad  n=M+1, \ldots, M+N.
$$

\begin{theorem} 
\label{thm:discrep short}
For almost all $x_d\in  [0,1]$, one has
$$
\sup_{(y_1, \ldots, y_{d-1})\in \T_{d-1}} \sup_{M \in \Z} D_d(\vu; M,N)  \le N^{1-1/(d+2) +o(1)}, \qquad N\to \infty, 
$$
where $\vu = (y_{1}, \ldots, y_{d-1}, x_d)$.
\end{theorem}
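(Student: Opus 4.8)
\textbf{Proof plan for Theorem~\ref{thm:discrep short}.}

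The plan is to deduce the short-interval discrepancy bound from the short-interval exponential sum bound of Theorem~\ref{thm:weyl short} via the Erd\H{o}s--Tur\'an inequality. First I would recall the setup: for fixed $x_d$ and fixed $(y_1,\ldots,y_{d-1})$ and $M$, the sequence in question is $\xi_n = \{u_1 n + \ldots + u_d n^d\}$ for $n = M+1, \ldots, M+N$, and by the Erd\H{o}s--Tur\'an inequality (Lemma~\ref{lem:ET}), for any integer $H \ge 1$ one has
$$
D_d(\vu; M,N) \ll \frac{N}{H} + \sum_{h=1}^{H} \frac{1}{h} \left| \sum_{n=M+1}^{M+N} \e\(h(u_1 n + \ldots + u_d n^d)\)\right| = \frac{N}{H} + \sum_{h=1}^{H} \frac{1}{h} \left| S_d(h\vu; M, N)\right|.
$$
Thus I need, for almost all $x_d$, a uniform-in-$(y_1,\ldots,y_{d-1})$, uniform-in-$M$, and uniform-in-$h \le H$ bound on $|S_d(h\vu; M, N)|$.

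The key observation is that the leading coefficient of $h\vu$ is $h x_d$, so I want the conclusion of Theorem~\ref{thm:weyl short} to hold not just for $x_d$ but simultaneously for $hx_d$ for all $h \in \N$. This is a routine measure-theoretic point: the exceptional set in Theorem~\ref{thm:weyl short} has measure zero, and the preimage of a null set under multiplication by a fixed nonzero integer $h$ (mod~$1$) is null; taking a countable union over $h \in \N$ still gives a null set. Hence for almost all $x_d$, the bound
$$
\sup_{(y_1,\ldots,y_{d-1})\in\T_{d-1}} \sup_{M\in\Z} |S_d(h\vu; M,N)| \le N^{1-1/(d+1)+o(1)}
$$
holds for every $h\in\N$ (the $o(1)$ being uniform in $h$ after a standard diagonalisation over a sequence $N\to\infty$). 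Substituting into the Erd\H{o}s--Tur\'an bound gives
$$
D_d(\vu; M,N) \ll \frac{N}{H} + N^{1-1/(d+1)+o(1)} \sum_{h=1}^H \frac{1}{h} \ll \frac{N}{H} + N^{1-1/(d+1)+o(1)} \log H.
$$
Optimising by choosing $H = N^{1/(d+1)}$ (or a nearby power absorbed by the $o(1)$) balances the two terms: the first becomes $N^{1-1/(d+1)}$ and the second $N^{1-1/(d+1)+o(1)}$. This already yields exponent $1-1/(d+1)$, which is slightly stronger than the claimed $1-1/(d+2)$; choosing $H$ a bit larger, say $H = N^{\beta}$ with $\beta$ slightly bigger than $1/(d+1)$, one still gets an exponent $< 1 - 1/(d+2)$, so the stated bound follows comfortably with room to spare.

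The main obstacle is the uniformity bookkeeping: one must check that a single null exceptional set of $x_d$ works simultaneously for all dilates $hx_d$, all shifts $M$, all coefficient vectors $(y_1,\ldots,y_{d-1})$, and all scales $N$ in a fixed sequence tending to infinity, with the $o(1)$ term uniform across the dilation parameter $h \le H = H(N)$. Since $H(N) \to \infty$, one cannot simply take a countable union of "bad $N$" sets for each fixed $h$ independently; instead one fixes the null set $\cN$ of $x_d$ for which Theorem~\ref{thm:weyl short} fails for \emph{some} $h\in\N$ along \emph{some} subsequence, notes $\lambda(\cN)=0$, and then for $x_d \notin \cN$ runs the Erd\H{o}s--Tur\'an estimate with the genuinely uniform bound; the interplay between the $\log H$ loss and the gap between exponents $1/(d+1)$ and $1/(d+2)$ is exactly what absorbs any $N^{o(1)}$ slack. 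Everything else is the standard Erd\H{o}s--Tur\'an-to-discrepancy routine already used in~\cite{Wool3}.
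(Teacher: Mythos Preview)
Your approach differs from the paper's and contains a genuine gap: the claimed uniformity of the $o(1)$ in $h$ is not justified and, as the paper itself flags in Remark~\ref{rem:lin improve}, cannot be obtained by the argument you sketch.

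Concretely, outside the null set $\cN$ you describe, what you actually know is that for each fixed $h\in\N$ there is a constant $C(h,x_d,\varepsilon)$ (equivalently, a threshold $N_0(h,x_d,\varepsilon)$) such that $|S_d(h\vu;M,N)|\le C(h,x_d,\varepsilon)\,N^{1-1/(d+1)+\varepsilon}$ for all $N$. Nothing in the ``almost all'' conclusion of Theorem~\ref{thm:weyl short} controls how $C(h,x_d,\varepsilon)$ or $N_0(h,x_d,\varepsilon)$ grows with $h$. In the Erd\H{o}s--Tur\'an step you need the bound simultaneously for all $h\le H(N)$ with $H(N)\to\infty$, so the sum $\sum_{h\le H(N)}C(h,x_d,\varepsilon)/h$ may be arbitrarily large. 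No ``standard diagonalisation over a sequence $N\to\infty$'' repairs this: diagonalisation lets you thin out $N$, not control a family of constants indexed by a parameter $h$ that grows with $N$. The slack between exponents $1/(d+1)$ and $1/(d+2)$ is irrelevant here; the issue is not a weaker exponent but a potentially unbounded implicit constant.

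The paper avoids this by \emph{not} invoking Theorem~\ref{thm:weyl short} at all. Instead it shifts $n\mapsto n+M$ so that the leading coefficient stays $x_d$ while the lower coefficients become new free variables, reduces to an initial-segment discrepancy, and then applies Theorem~\ref{thm:general-D} with $k=1$, $\varphi_1(T)=T^d$. The crucial point is that in the proof of Theorem~\ref{thm:general-D} the Erd\H{o}s--Tur\'an parameter $G_i$ is inserted \emph{inside} the Borel--Cantelli estimate (via Lemma~\ref{lem:invairant} and the bound $\lambda(\widetilde{\cB_i})\le G_i\,N_i^{\ldots}$), so the uniformity over $g\le G_i$ is built in before one passes to ``almost all $x_d$''. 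This is exactly the mechanism your proposal lacks, and it comes at the cost of using the weaker exponent from Theorem~\ref{thm:general} rather than the self-improved exponent from Theorem~\ref{thm:A}; computing $\gamma(\bphi,1)$ with $\sigma_1(\bphi)=d(d-1)/2$ gives $1-1/(d+2)$, matching the statement.
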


From Theorem~\ref{thm:discrep short}  we obtain that for almost all $\vu\in \Tor$ one has   
$$
\sup_{M \in \Z}  D_d(\vu; M,N)  \le N^{1-1/(d+2)+o(1)}, \qquad N\to \infty.
$$

Finally, for $D_d(\vu; N)$ we claim that by combining~\cite[Theorem~5.13]{Harm} with some additional arguments,  
one can show that for  almost all $\vu\in \Tor$ with $d\ge 2$  one has the following stronger bound,
\begin{equation}
\label{eq:Du}
D_d(\vu; N)\le N^{1/2} (\log N)^{3/2 + o(1)}, \qquad N \to  \infty.
\end{equation}
We give a proof at Section~\ref{sec:comments}. Furthermore we conjecture that  this upper bound is the best possible except for a logarithm factor.   We   remark  that this is true for $d=2$ which follows by applying a result of Fedotov and Klopp~\cite[Theorem~0.1]{FK}  
and the  Koksma inequality~\cite[Theorem~5.4]{Harm}. However the conjecture is still open when $d\ge 3$. 

\section{Preliminaries} 
 
\subsection{Notation and conventions}

Throughout the paper, the notation $U = O(V)$, 
$U \ll V$ and $ V\gg U$  are equivalent to $|U|\leqslant c V$ for some positive constant $c$, 
which throughout the paper may depend on the degree $d$ and occasionally on the small real positive 
parameter $\varepsilon$. 

For any quantity $V> 1$ we write $U = V^{o(1)}$ (as $V \to \infty$) to indicate a function of $V$ which 
satisfies $|U| \le V^\eps$ for any $\eps> 0$, provided $V$ is large enough. One additional advantage 
of using $V^{o(1)}$ is that it absorbs $\log V$ and other similar quantities without changing  the whole 
expression.

 We use $\# \cS$ to denote the cardinality of a finite set $\cS$.

 We always identify $\Tor$ with half-open unit cube $[0, 1)^d$, in particular we
 naturally associate the Euclidean norm  $\|\vx\|$ with points $\vx \in \Tor$. 
 
We say that some property holds for almost all $\vx \in [0,1)^k$ if it holds for a set 
 $\cX \subseteq [0,1)^k$ of $k$-dimensional  Lebesgue measure  $\lambda(\cX) = 1$.

We always assume that  $\bphi \in \Z[T]^d$  consists of polynomials $\varphi_j$ of 
degrees
\begin{equation}
\label{eq:deg phi}
\deg \varphi_j = e_j, \qquad j =1, \ldots, d.
\end{equation}

\subsection{Generalised mean value theorems}

For the classical case of the Weyl sums $S_d(\vu; N)$  as in~\eqref{eq:Sd}, the {\it Parseval identity\/}   gives
$$
\int_{\Tor} |S_d(\vu; N)|^{2} d \vu= N.
$$
Furthermore, we have  the  Vinogradov mean value theorem, in the optimal  form~\eqref{eq:MVT}.

We use the following a general form due to Wooley~\cite[Theorem~1.1]{Wool5}, which extends the bound~\eqref{eq:MVT}
to the sums $T_{\va, \bphi}( \vu; N)$.

We also recall that for functions  $\bpsi = \(\psi_1, \ldots, \psi_d\) \in \Z[T]^d$ their  Wronskian $W(T;\bpsi) $   is 
defined in~\eqref{eq:Wronsk}.  

\begin{lemma}   
\label{lem:Wooley}
For any  a family $\bphi \in \Z[T]^d$  of $d$  polynomials 
such that  the Wronskian  $W(T;\bphi)$ 
does not vanish identically,  any sequence of complex  weights $\vec{a} = (a_n)_{n=1}^\infty$, and 
any integer $N\ge 1$,   we have the upper bound  
$$
\int_{\Tor} |T_{\va, \bphi}( \vu; N)|^{2\sigma } d\vu\le N^{o(1)} \(\sum_{n=1}^{N}|a_n|^2\)^{\sigma}
$$
for any real positive $\sigma \le s(d)$, where $s(d)$ is given by~\eqref{eq:sq}. 
\end{lemma}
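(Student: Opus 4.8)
The plan is to derive the inequality from the unweighted Vinogradov mean value theorem for the system $\bphi$, in a form that is stable under restricting the range of summation. First I would reduce to the critical exponent. Since $\int_{\Tor}d\vu=1$, Hölder's inequality with exponents $s(d)/\sigma$ and its conjugate gives
$$
\int_{\Tor}\l|T_{\va,\bphi}(\vu;N)\r|^{2\sigma}\,d\vu\le\(\int_{\Tor}\l|T_{\va,\bphi}(\vu;N)\r|^{2s(d)}\,d\vu\)^{\sigma/s(d)},
$$
so it suffices to treat $\sigma=s(d)$, where $s(d)$ and $2s(d)=d(d+1)$ are integers. Expanding the $2s(d)$-th power and integrating term by term, the integral equals $\sum_{\vk}\l|c_{\vk}(\va)\r|^2$, where $c_{\vk}(\va)$ is the sum of $\prod_{i=1}^{s(d)}a_{n_i}$ over all $\vec n=(n_1,\dots,n_{s(d)})\in\{1,\dots,N\}^{s(d)}$ with $\sum_{i}\varphi_j(n_i)=k_j$ for $1\le j\le d$; in other words it is a weighted count of solutions of this system.

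Next I would strip the weights. From $\l|c_{\vk}(\va)\r|\le c_{\vk}(|\va|)$, where $|\va|=(|a_n|)_n$, and Parseval one gets $\int_{\Tor}\l|T_{\va,\bphi}\r|^{2s(d)}\,d\vu\le\int_{\Tor}\l|T_{|\va|,\bphi}\r|^{2s(d)}\,d\vu$, so we may assume $a_n\ge0$. Partitioning $\{1,\dots,N\}$ by the dyadic size of $a_n$ into $O(\log N)$ classes $\cC_\ell=\{n:\,A2^{-\ell-1}<a_n\le A2^{-\ell}\}$ with $A=\max_n a_n$, together with a tail $\{n:\,a_n\le AN^{-2}\}$ whose contribution to the integral is at most $A^{2s(d)}\le\(\sum_n a_n^2\)^{s(d)}$, and applying the triangle inequality in $L^{2s(d)}(\Tor)$, it is enough to bound the integral when $a_n=\rho\,\mathbf{1}_{\cC}(n)$ for some $\cC\subseteq\{1,\dots,N\}$ and $\rho>0$. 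For such a weight $\int_{\Tor}\l|T_{\va,\bphi}\r|^{2s(d)}d\vu\le\rho^{2s(d)}\int_{\Tor}\Bigl|\sum_{n\in\cC}\e(u_1\varphi_1(n)+\dots+u_d\varphi_d(n))\Bigr|^{2s(d)}d\vu$, while $\(\sum_n a_n^2\)^{s(d)}\asymp\rho^{2s(d)}|\cC|^{s(d)}$, so the whole lemma comes down to the single unweighted estimate
$$
\int_{\Tor}\Bigl|\sum_{n\in\cC}\e(u_1\varphi_1(n)+\dots+u_d\varphi_d(n))\Bigr|^{2s(d)}d\vu\le N^{o(1)}\,|\cC|^{s(d)},\qquad\cC\subseteq\{1,\dots,N\}.
$$

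This estimate — equivalently, that the number of solutions of the $\bphi$-system with all $2s(d)$ variables in $\cC$ is $N^{o(1)}|\cC|^{s(d)}$, uniformly in $\cC$ — is the entire content, and amounts to $\ell^2$-decoupling at the critical exponent for the curve $t\mapsto(\varphi_1(t),\dots,\varphi_d(t))$, or equivalently to running Wooley's efficient-congruencing iteration for the system $\bphi$. It is here, and only here, that the Wronskian hypothesis is used: its non-vanishing is precisely the non-degeneracy condition that makes this curve behave like the standard moment curve $t\mapsto(t,t^2,\dots,t^d)$ (for which the estimate is the main conjecture in Vinogradov's mean value theorem), and it controls the ``conditioning'' steps of the congruencing argument. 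The uniformity over subsets $\cC$ is automatic in both approaches, since the decoupling constant attached to the scale $1/N$ is $N^{o(1)}$ irrespective of which of the $N$ caps carry nonzero mass. This is Wooley's theorem, which I would simply quote at this point.

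The one genuinely hard step is therefore that unweighted subset estimate, and the point worth stressing is that it cannot be extracted from the classical mean value theorem over $\{1,\dots,N\}$ by soft manipulations. Applying Cauchy--Schwarz to the Fourier coefficients $c_{\vk}$, or the arithmetic--geometric mean inequality to the products $\prod_i a_{n_i}\prod_i a_{m_i}$, each loses a factor of order $N^{s(d)/2}$, because $\max_{\vk}\#\{\vec n\in\{1,\dots,N\}^{s(d)}:\ \sum_i\varphi_j(n_i)=k_j\ (1\le j\le d)\}$ genuinely grows like a fixed positive power of $N$; the cancellation encoded in the weighted (equivalently, sparsely supported) count has to be produced inside the decoupling/congruencing argument itself.
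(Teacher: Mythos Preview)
Your reduction is correct, but in the paper this lemma is not proved at all: it is simply quoted as~\cite[Theorem~1.1]{Wool5}, and Wooley's statement there is already formulated with arbitrary complex weights $a_n$. So no passage from the weighted inequality to an unweighted subset inequality is required.

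That said, your argument is a correct and worthwhile observation in its own right: the dyadic decomposition of the weights shows that the weighted estimate and the unweighted subset estimate
$$
\int_{\Tor}\Bigl|\sum_{n\in\cC}\e\bigl(u_1\varphi_1(n)+\dots+u_d\varphi_d(n)\bigr)\Bigr|^{2s(d)}d\vu\le N^{o(1)}\,|\cC|^{s(d)},\qquad\cC\subseteq\{1,\dots,N\},
$$
are equivalent up to $N^{o(1)}$ losses (the trivial direction being to take $a_n=\mathbf{1}_{\cC}(n)$). Your closing paragraph is also accurate and worth emphasising: the subset (equivalently, weighted) version genuinely cannot be extracted from the classical full-range mean value over $\{1,\dots,N\}$ by soft manipulations, and must be produced inside the decoupling or efficient-congruencing machinery itself. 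This is precisely why Wooley states his theorem with weights from the outset.
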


\subsection{The completion technique}

We remark that the completion technique has many applications in analytic number theory. We show the following version for the later application.

\begin{lemma}\label{lem:control} 
For $\vu\in \Tor$ and $1\le M\le N$ we have 
$$
T_{\va, \bphi}( \vu; M) \ll W_{\va, \bphi}( \vu; N),
$$
where 
\begin{align*}
W_{\va, \bphi}&( \vu; N)\\
&=  \sum_{h=-N}^{N} \frac{1}{|h|+1} \left| \sum_{n=1}^{N} a_n  \e\(h n/N+u_1 \varphi_1(n)+\ldots + u_d\varphi_d(n) \) \right|.
\end{align*}
\end{lemma}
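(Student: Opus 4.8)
The statement is a standard ``completion'' inequality: one bounds an incomplete trigonometric polynomial of length $M\le N$ by a weighted average of complete ones of length $N$, the price being a factor that is absorbed by the logarithmic weight $\frac{1}{|h|+1}$. The plan is to realise the indicator of the interval $\{1,\ldots,M\}$ inside $\{1,\ldots,N\}$ as a Fourier expansion on $\Z/N\Z$. Concretely, write
\[
\mathbf{1}_{[1,M]}(n) = \sum_{h=0}^{N-1} c_h\, \e(hn/N), \qquad 1\le n\le N,
\]
where $c_h = \frac{1}{N}\sum_{m=1}^{M} \e(-hm/N)$ are the discrete Fourier coefficients. Inserting this into $T_{\va,\bphi}(\vu;M) = \sum_{n=1}^{M} a_n \e(u_1\varphi_1(n)+\ldots+u_d\varphi_d(n))$ and swapping the order of summation gives
\[
T_{\va,\bphi}(\vu;M) = \sum_{h=0}^{N-1} c_h \sum_{n=1}^{N} a_n\, \e\(hn/N + u_1\varphi_1(n)+\ldots+u_d\varphi_d(n)\),
\]
so that $|T_{\va,\bphi}(\vu;M)| \le \sum_{h=0}^{N-1} |c_h| \bigl| \sum_{n=1}^{N} a_n \e(hn/N + \ldots) \bigr|$.

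\textbf{Key steps.} First I would establish the elementary bound $|c_h| \ll \frac{1}{|h|_N + 1}$, where $|h|_N$ denotes the distance from $h$ to the nearest multiple of $N$ (equivalently, after reindexing $h$ over a symmetric range $-N/2 < h \le N/2$, one has $|c_h|\ll \frac{1}{|h|+1}$). This is the classical geometric-series estimate: $|c_h| = \frac{1}{N}\bigl|\frac{\e(-hM/N)-1}{\e(-h/N)-1}\bigr| \le \frac{1}{N}\cdot\frac{2}{|\e(h/N)-1|} \ll \frac{1}{N}\cdot\frac{N}{|h|_N} = \frac{1}{|h|_N}$ for $h\not\equiv 0$, while $|c_0| = M/N \le 1$. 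Second, I would reindex the sum over $h\in\{0,\ldots,N-1\}$ to the symmetric range $h\in\{-N,\ldots,N\}$ (using periodicity mod $N$ of both $c_h$ and $\e(hn/N)$, and noting the extra terms only help), matching exactly the shape of $W_{\va,\bphi}(\vu;N)$. Combining the two gives
\[
|T_{\va,\bphi}(\vu;M)| \ll \sum_{h=-N}^{N} \frac{1}{|h|+1} \left| \sum_{n=1}^{N} a_n\, \e\(hn/N + u_1\varphi_1(n)+\ldots+u_d\varphi_d(n)\)\right| = W_{\va,\bphi}(\vu;N),
\]
as claimed.

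\textbf{Main obstacle.} There is no serious obstacle here — the only point requiring a little care is the bookkeeping around the index range: making sure that passing from the ``natural'' Fourier range $0\le h\le N-1$ to the symmetric range $-N\le h\le N$ is done correctly so that every term $|\sum_n a_n\e(hn/N+\ldots)|$ appearing in $W_{\va,\bphi}(\vu;N)$ dominates (up to the absolute constant) the corresponding contribution, and that the weight $\frac{1}{|h|+1}$ in the target genuinely dominates $|c_h|$ for the reindexed $h$. One should also note that the bound is completely uniform in $\vu$, in $\va$, and in the polynomials $\bphi$ (the polynomial phase is merely a passive multiplier throughout), which is exactly what is needed for the later applications.
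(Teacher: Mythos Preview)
Your proposal is correct and follows essentially the same approach as the paper: the paper likewise expresses the indicator of $\{1,\ldots,M\}$ via orthogonality on $\Z/N\Z$, swaps summation, bounds the resulting coefficients $\frac{1}{N}\sum_{k=1}^M \e(-hk/N)$ by $\frac{1}{\min\{h,N+1-h\}}$ using the geometric series, and then reindexes to the symmetric range $-N\le h\le N$. The only cosmetic difference is that the paper sums $h$ over $1,\ldots,N$ rather than $0,\ldots,N-1$, but this is immaterial.
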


\begin{proof} For $\vu\in \Tor$ and $n\in \N$ denote
$$
f(n)=u_1 \varphi_1(n)+\ldots + u_d\varphi_d(n).
$$

Observe that by the orthogonality 
$$
\frac{1}{N}\sum_{h=1}^{N}\sum_{k=1}^{M} \e\left(h(n-k)/N\right)=
  \begin{cases}
   1 & n=1, \ldots, M, \\
  0 & \text{otherwise}.
  \end{cases}
$$
We also note that for $1 \le h,M \le N$ we have
$$
  \sum_{k=1}^{M}\e\left(hk/N\right )  \ll \frac{N}{\min\{h, N+1 - h\}},
$$
see~\cite[Equation~(8.6)]{IwKow}.  
It follows that   
\begin{align*}
T_{\va, \bphi}(\vu; M)
&=\sum_{n=1}^{N}a_n\e(f(n)) \frac{1}{N}\sum_{h=1}^{N}\sum_{k=1}^{M} \e\left(h(n-k)/N\right)\\
&=\frac{1}{N} \sum_{h=1}^{N}  \sum_{k=1}^{M}\e\left(-hk/N\right ) \sum_{n=1}^{N} a_n\e\left (hn/N+f(n) \right) \\
&\ll \sum_{h=1}^{N} \frac{1}{\min\{h, N+1 - h\}} \left |\sum_{n=1}^{N}a_n \e\left (hn/N+f(n) \right) \right |\\
&\ll  \sum_{h=-N}^{N} \frac{1}{|h|+1} \left |\sum_{n=1}^{N}a_n \e\(hn/N + f(n)\) \right |,
\end{align*}
which finishes the proof.
\end{proof}

For $\vx\in [0,1)^k$, $\vy\in [0,1)^{d-k}$, by Lemma~\ref{lem:control} we also have 
$$
T_{\va, \bphi}( \vx, \vy; N)\ll W_{\va, \bphi}( \vx, \vy; N),
$$
where 
\begin{align*}
& W_{\va, \bphi}( \vx, \vy; N) \\
&  \quad =  \sum_{h=-N}^{N} \frac{1}{|h|+1} \left| \sum_{n=1}^{N} a_n  \e\(h n/N + \sum_{j=1}^k x_j \varphi_j(n)+ \sum_{j=1}^{d-k}y_j\varphi_{k+j}(n) \) \right|.
\end{align*}

Note that for any $N$ there exists a sequence $b_N(n)$ such that 
$$
b_N(n) \ll \log N, \qquad n=1, \ldots, N, 
$$ 
and $W_{\va, \bphi}( \vu; N)$ can be written as 
\begin{equation}
\label{eq:W}
W_{\va, \bphi}( \vu; N)=\sum_{n=1}^N a_n b_N(n)  \e(u_1 \varphi_1(n)+\ldots + u_d\varphi_d(n)).
\end{equation} 
Indeed, since each inner sums in $W_{\va, \bphi}( \vu; N)$ depends only on $h$
(for a fixed $N$)  we clearly can write
$$
W_{\va, \bphi}( \vu; N)= \sum_{h=-N}^{N} \frac{\vartheta_h}{|h|+1}   \sum_{n=1}^{N} a_n  \e\(h n/N+u_1 \varphi_1(n)+\ldots + u_d\varphi_d(n) \) 
$$                           
for some complex $\vartheta_h$ on the unit circle. Hence we can take
$$
b_N(n) =  \sum_{h=-N}^{N} \frac{\vartheta_h}{|h|+1}  \e(h n/N) \ll \log N 
$$ 
in~\eqref{eq:W}. 
Combining~\eqref{eq:W}  with Lemma~\ref{lem:Wooley} we obtain the following.
\begin{cor}   
\label{cor:MVT-W}
Let $\bphi \in \Z[T]^d$  
such that  the Wronskian  $W(T;\bphi)$ 
does not vanish identically and  $a_n=n^{o(1)}$, then  we have   
$$
\int_{\Tor} |W_{\va, \bphi}( \vu; N)|^{2 s(d)} d\vu\le N^{s(d)+o(1)}.
$$
\end{cor}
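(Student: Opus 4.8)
The plan is to recognise $W_{\va,\bphi}(\vu;N)$ as an ordinary weighted sum of the shape~\eqref{eq:Tphi} with a modified weight sequence, and then to quote Lemma~\ref{lem:Wooley} essentially verbatim. First I would invoke the representation~\eqref{eq:W}, which already does the main bookkeeping: it writes
$$
W_{\va,\bphi}(\vu;N)=\sum_{n=1}^{N} a_n b_N(n)\,\e\(u_1\varphi_1(n)+\ldots+u_d\varphi_d(n)\),\qquad b_N(n)\ll\log N,
$$
so that, setting $a_n'=a_nb_N(n)$ for $1\le n\le N$, we have $W_{\va,\bphi}(\vu;N)=T_{\va',\bphi}(\vu;N)$ in the notation of~\eqref{eq:Tphi}, with $\va'=(a_n')_{n=1}^{N}$ (a finite sequence, which we may extend by zeros).

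Next I would estimate the new weights. Since $a_n=n^{o(1)}$ and $b_N(n)\ll\log N$, for every $n\le N$ we get $|a_n'|\le N^{o(1)}$, whence $\sum_{n=1}^{N}|a_n'|^2\le N^{1+o(1)}$. Now applying Lemma~\ref{lem:Wooley} to the family $\bphi$ (whose Wronskian does not vanish identically by hypothesis), to the weight sequence $\va'$, and with $\sigma=s(d)$, I would obtain
$$
\int_{\Tor}|W_{\va,\bphi}(\vu;N)|^{2s(d)}\,d\vu=\int_{\Tor}|T_{\va',\bphi}(\vu;N)|^{2s(d)}\,d\vu\le N^{o(1)}\Bigl(\sum_{n=1}^{N}|a_n'|^2\Bigr)^{s(d)}\le N^{o(1)}\cdot N^{(1+o(1))s(d)}=N^{s(d)+o(1)},
$$
which is exactly the asserted bound.

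There is no real obstacle here; the argument is a one-line reduction to Lemma~\ref{lem:Wooley}. The only subtlety worth a remark is that the auxiliary coefficients $b_N(n)$ depend on $N$, so $\va'$ is an $N$-dependent weight sequence; but since Lemma~\ref{lem:Wooley} is stated for each fixed $N\ge 1$ and for arbitrary complex weights, this is harmless, and the $\log N$ losses coming from $b_N(n)$ are absorbed into the $N^{o(1)}$ factors.
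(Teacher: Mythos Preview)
Your proposal is correct and follows exactly the approach the paper takes: the paper derives the corollary in a single sentence by combining the representation~\eqref{eq:W} of $W_{\va,\bphi}$ as $T_{\va',\bphi}$ with weights $a_n'=a_nb_N(n)\ll n^{o(1)}\log N$ and then invoking Lemma~\ref{lem:Wooley} with $\sigma=s(d)$. Your remark about the $N$-dependence of the auxiliary weights being harmless (since Lemma~\ref{lem:Wooley} applies for each fixed $N$ to arbitrary complex weights) is a useful clarification that the paper leaves implicit.
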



\subsection{Continuity of exponential sums} 

We start with the following general statement which could be of independent interest.

\begin{lemma}  
\label{lem:con gen} Let integer $N \ge 1$ and a vector $\vu=(u_1, \ldots, u_d) \in \Tor$ be such that 
for any $1\le M \le N$ we have  
$$
T_{\va, \bphi}( \vu; M)\ll    M^{\rho} N^{o(1)}
$$
as $N\to \infty$, for some real   $\rho\ge 0$.  Then for any positive  $\omega = O(1)$  and  $\vv=(v_1, \ldots, v_d) \in \Tor$ 
with 
$$
u_i \le v_i< u_i+ \omega N^{-e_i} 
$$
 if $\varphi_i(n)>0$  for all large enough n, and
$$
u_i -\omega N^{-e_i} < v_i \le u_i,
$$
  if $\varphi_i(n)<0$ for all large enough $ n$,  
we obtain  
$$
T_{\va, \bphi}( \vv; N)- T_{\va, \bphi}( \vu; N) \ll   \omega N^{\rho+o(1)}, 
$$
where the implied constant is absolute.  
\end{lemma}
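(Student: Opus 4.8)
The plan is to estimate the difference $T_{\va,\bphi}(\vv;N) - T_{\va,\bphi}(\vu;N)$ term by term using a telescoping/Abel-summation argument, exploiting the hypothesised power-saving bound on the \emph{partial} sums $T_{\va,\bphi}(\vu;M)$ for all $1 \le M \le N$. First I would write the difference as
$$
T_{\va,\bphi}(\vv;N) - T_{\va,\bphi}(\vu;N) = \sum_{n=1}^{N} a_n \e\(f_\vu(n)\)\(\e\(g(n)\)-1\),
$$
where $f_\vu(n) = \sum_i u_i \varphi_i(n)$ and $g(n) = \sum_i (v_i - u_i)\varphi_i(n)$ is the "phase drift". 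The sign conditions on $v_i - u_i$ (matching the eventual sign of $\varphi_i$) are there precisely to control $g(n)$: each summand $(v_i-u_i)\varphi_i(n)$ has absolute value at most $\omega N^{-e_i}|\varphi_i(n)| \ll \omega N^{-e_i} n^{e_i} \le \omega$ for $n \le N$, so $|g(n)| \ll \omega$ and hence $|\e(g(n))-1| \ll \omega$ uniformly in $n \le N$. A crude bound would then only give $O(\omega N^{1+o(1)})$, which is too weak; the point of the sign conditions is that $g(n)$ is \emph{monotone} (each term grows monotonically in $n$ for large $n$), so that $\e(g(n))-1$ has controlled variation and Abel summation against the partial sums becomes efficient.

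The key step is partial summation. Writing $S(M) = \sum_{n=1}^{M} a_n \e(f_\vu(n)) = T_{\va,\bphi}(\vu;M)$ and $c_n = \e(g(n)) - 1$, we have
$$
\sum_{n=1}^{N} a_n \e(f_\vu(n)) c_n = S(N) c_N - \sum_{M=1}^{N-1} S(M)\(c_{M+1} - c_M\).
$$
The hypothesis gives $|S(M)| \ll M^{\rho} N^{o(1)} \le N^{\rho + o(1)}$ for every $M \le N$. For the boundary term, $|S(N)c_N| \ll N^{\rho+o(1)} \cdot \omega$. For the sum, I would bound
$$
\sum_{M=1}^{N-1} |S(M)|\,|c_{M+1}-c_M| \ll N^{\rho+o(1)} \sum_{M=1}^{N-1} |c_{M+1}-c_M|,
$$
and the total variation $\sum_M |c_{M+1}-c_M|$ must be shown to be $O(\omega)$. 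Since $c_n = \e(g(n))-1$ and $|g(n)| \ll \omega = O(1)$, the map $t \mapsto \e(t)-1$ is Lipschitz on the relevant bounded range, so $|c_{M+1}-c_M| \ll |g(M+1)-g(M)|$; and because each $\varphi_i(n)$ is eventually monotone with the sign condition forcing $(v_i-u_i)(\varphi_i(M+1)-\varphi_i(M))$ to be eventually of constant sign, the sum $\sum_M |g(M+1)-g(M)|$ telescopes (up to a bounded number of initial terms) to $\ll |g(N)| + O(1) \ll \omega + O(1)$. Combining, the whole expression is $\ll \omega N^{\rho + o(1)}$, absorbing the $O(1)$ additive terms and the bounded initial corrections into the $N^{o(1)}$ factor.

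The main obstacle I anticipate is handling the "for all large enough $n$" qualifier cleanly: the monotonicity and sign of $\varphi_i(n)$ and of $\varphi_i(M+1)-\varphi_i(M)$ are only guaranteed for $n \ge n_0$ for some constant $n_0$ depending on $\bphi$. The contribution of the finitely many terms $n < n_0$ to the original difference is trivially $O(n_0 \cdot \omega) = O(\omega)$ (since each $|a_n| = n^{o(1)}$ and $|c_n| \ll \omega$), and similarly the total variation over $M < n_0$ contributes $O(\omega)$; so these are harmless, but one must be careful to state this and to note that different $\varphi_i$ may have different sign behaviour, so the telescoping is applied coordinatewise to $g(n) = \sum_i (v_i-u_i)\varphi_i(n)$ by splitting into the individual monomial-like pieces before bounding the variation. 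Once this bookkeeping is in place, everything reduces to the partial-summation estimate above.
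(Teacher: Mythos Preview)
Your approach is correct and in fact more direct than the paper's. Both arguments hinge on Abel summation against the partial sums $S(M)=T_{\va,\bphi}(\vu;M)$ together with the eventual monotonicity of the phase drift $g(n)=\sum_i (v_i-u_i)\varphi_i(n)$. The paper, however, first Taylor-expands $\e(g(n))=\sum_{k\ge 0}(2\pi i\, g(n))^k/k!$, applies Abel summation to each $\Sigma_k=\sum_n a_n\e(f_\vu(n))g(n)^k$ separately (using that $g(M)^k$ is eventually monotone), bounds $\Sigma_k\ll (d\omega)^k N^{\rho+o(1)}$, and then sums the series to get $\exp(d\omega)-1\ll\omega$. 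Your route bypasses this entirely: you Abel-sum with the weight $c_n=\e(g(n))-1$ directly and control its total variation via the Lipschitz bound $|c_{M+1}-c_M|\ll |g(M+1)-g(M)|$, which telescopes by monotonicity. This is cleaner and avoids the $k$-indexed detour.

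One point to tighten: your line ``telescopes \ldots\ to $\ll |g(N)|+O(1)\ll \omega+O(1)$'' followed by ``absorbing the $O(1)$ additive terms \ldots\ into the $N^{o(1)}$ factor'' is not right as written, since an additive $O(1)$ in the total variation would produce an error $N^{\rho+o(1)}$ rather than $\omega N^{\rho+o(1)}$. The correct observation---which you do make in the next paragraph---is that the finitely many initial terms $M<n_0$ each satisfy $|g(M+1)-g(M)|\le \sum_i |v_i-u_i|\,|\varphi_i(M+1)-\varphi_i(M)|\ll \omega\sum_i N^{-e_i}\ll\omega$, so the initial contribution to the variation is $O(\omega)$, not $O(1)$; likewise $|g(M_0)|\ll\omega$. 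With that correction the total variation is $\ll\omega$ and your bound $\omega N^{\rho+o(1)}$ follows cleanly.
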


\begin{proof}
We first remark that the condition $\varphi_i(n)>0$  for all 
sufficiently large $n$ is equivalent to that  the polynomial $\varphi_i(n)$ is eventually an increasing function, 
which is  used below when we apply the partial sum formula.

Furthermore we remark that the  choice of $\vv=(v_1, \ldots, v_d)$ is to guarantee   the 
``non-negativity condition"
\begin{equation}
\label{eq:non-negative}
(v_i -u_i) \varphi_i(n)\ge 0, \quad i=1, \ldots, d,
\end{equation}
for all large enough $n$. 

Let  $\delta_i =  v_i-u_i$,  $i =1, \ldots, d$. 
For each $n\in \N$ we have 
\begin{align*}
&\e\(v_1 \varphi_1(n)+\ldots + v_d\varphi_d(n) \)\\
&\qquad  =\e\(u_1 \varphi_1(n)+\ldots + u_d\varphi_d(n) \)\e(\delta_1 \varphi_1(n)+\ldots + \delta_d \varphi_d(n))\\
&\qquad =\e\(u_1 \varphi_1(n)+\ldots + u_d\varphi_d(n) \) \\
& \qquad \qquad \qquad\quad \times \sum_{k=0}^{\infty} \frac{(2\pi i (\delta_1 \varphi_1(n)+\ldots + \delta_d \varphi_d(n))^{k}}{k!}.
\end{align*}

It follows that 
\begin{equation} \label{eq:diff}
\begin{split}
T_{\va, \bphi}( \vv; N)&- T_{\va, \bphi}( \vu; N)\\
&   = \sum_{k=1}^{\infty} \sum_{n=1}^{N} a_n \e\(u_1 \varphi_1(n)+\ldots + u_d\varphi_d(n) \) \\
& \qquad \qquad \qquad 
\times \frac{(2\pi i  (\delta_1\varphi_1(n)+\ldots + \delta_d \varphi_d(n))^{k}}{k!}.
\end{split} 
\end{equation}
For each $k\in \N$ we now turn to the estimate 
$$
\Sigma_k =\sum_{n=1}^{N} a_n\e\(\delta_1\varphi_1(n)+\ldots + \delta_d \varphi_d(n)\)  \xi_{n}^{k}, 
$$
where $\xi_{n}=\delta_1\varphi_1(n)+\ldots + \delta_d \varphi_d(n)$.   Applying partial sum formula we derive 
$$
\Sigma_k=  \Sigma_{k,1} + \Sigma_{k,2}
$$
where 
$$
\Sigma_{k,1} = T_{\va, \bphi}( \vu; N)\ \xi_{N}^k  \mand  \Sigma_{k,2}
 = \sum_{M=1}^{N-1} T_{\va, \bphi}( \vu; M)\(\xi_{M}^k -\xi_{M+1}^k\).
$$
By our assumption,  we obtain 
\begin{equation} \label{eq:sigma1}
\Sigma_{k,1}\ll  N^{\rho+o(1)}  (\delta_1N^{e_1}+\ldots + \delta_dN^{e_d})^k \le (d \omega)^k  N^{\rho+o(1)}.
\end{equation}
Observe that   there exists a constant $M_0$ (which depends on $\bphi$ only) 
 such that  the sequence $\xi_{M}$ is monotonically non-decreasing for all $M\ge M_0$.  It follows that 
\begin{equation}\label{eq:sigma2}
\begin{split}
 \Sigma_{k,2}   &\ll  \sum_{M=1}^{N-1}   M^{\rho}N^{o(1)} \left| \xi_{M}^k -\xi_{M+1}^k\right|\\
 &\ll  N^{\rho+o(1)} \left(\sum_{M=1}^{M_0} | \xi_{M}^k -\xi_{M+1}^k| + \sum_{M=M_0+1}^{N-1} (\xi_{M+1}^k -\xi_{M}^k )  \right)\\
 &\ll N^{\rho+o(1)} (M_0 (d \omega)^k+ (d \omega)^k)\\
 &\ll N^{\rho+o(1)}  (d \omega)^k.
\end{split}
\end{equation}

We see from~\eqref{eq:sigma1} and~\eqref{eq:sigma2} that 
$$
 \Sigma_k \ll    (d \omega)^k  N^{\rho+o(1)},
$$
which together with~\eqref{eq:diff}  yields
\begin{align*}
T_{\va, \bphi}( \vv; N)- T_{\va, \bphi}( \vu; N) & \ll N^{\rho+o(1)}   \sum_{k=1}^{\infty}  \frac{ (d \omega)^k}{k!}\\
& = N^{\rho+o(1)} \(\exp(d\omega) -1\).
\end{align*} 
Since $|\exp(\omega) -1|\ll \omega$ for  $\omega = O(1)$,   the desired result follows. 
\end{proof}

We remark that if $a_n = n^{o(1)}$ we can always apply Lemma~\ref{lem:con gen} with $\rho =1$, 
which we actually do in Lemma~\ref{lem:cont-gen}  below. 
On the other hand, we can use some $0<\rho<1$ for some special cases, see Lemma~\ref{lem:cont-linear}  below.
Furthermore, for applications of Lemma~\ref{lem:con gen} to Lemmas~\ref{lem:cont-gen} and~\ref{lem:cont-linear},  the value $\omega$ 
is quite small, in particular,    $\omega=o(1)$.  

For $\vu\in \R^d$ and $\bzeta=(\zeta_1, \ldots, \zeta_d)$ with  $\zeta_j>0$, $j=1, \ldots, d$, we define the  $d$-dimensional box centred at $\vu$ and
with the side lengths $2\bzeta$ by  
$$
\cR(\vu, \bzeta)=[u_1-\zeta_1, u_1+\zeta_1)\times \ldots \times [u_d-\zeta_d, u_d+\zeta_d).
$$

We have the following analogues of Wooley~\cite[Lemma 2.1]{Wool3}.

\begin{lemma} 
\label{lem:cont-gen} 
Suppose that  $\bphi \in \Z[T]^d$ and  $e_1, \ldots e_d$ are as~\eqref{eq:deg phi}.  
Let $0<\alpha<1$ and let $\varepsilon>0$ be  sufficiently small. 
Suppose that 
$|W_{\va, \bphi}(\vu; N)|\ge N^{\alpha}$ for  some $\vu\in \Tor$,  then for 
$$
0<\zeta_j\le N^{\alpha-e_j-1-\eps},  \qquad j =1, \ldots, d, 
$$ 
there is a set $\cR^*(\vu, \bzeta)\subseteq \cR(\vu, \bzeta)$ with  
$$
\lambda(\cR^*(\vu, \bzeta))\gg \lambda(\cR(\vu, \bzeta)),
$$
such that 
$$
|W_{\va, \bphi}( \vv; N)|\ge  N^{\alpha}/2
$$
holds for  any $\vv\in \cR^*(\vu, \bzeta)$  
provided  that $N$ is large enough. 
\end{lemma}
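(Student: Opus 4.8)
The plan is to derive this from the continuity estimate of Lemma~\ref{lem:con gen}. It is important here \emph{not} to work with the representation~\eqref{eq:W} of $W_{\va,\bphi}(\vu;N)$ as a single weighted sum, since the weights $b_N(n)$ in~\eqref{eq:W} depend on $\vu$ and would change as $\vu$ is perturbed. Instead I keep $W_{\va,\bphi}$ in the form
$$
W_{\va,\bphi}(\vz;N)=\sum_{h=-N}^{N}\frac{1}{|h|+1}\,|I_h(\vz)|,\qquad I_h(\vz)=\sum_{n=1}^{N}a_n\e(hn/N+z_1\varphi_1(n)+\ldots+z_d\varphi_d(n)),
$$
and estimate $|I_h(\vv)-I_h(\vu)|$ for each $h$ separately. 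For a fixed $h$ the sum $I_h(\vz)$ equals $T_{\va,\bpsi}((\vz,h/N);N)$ with $\bpsi=(\varphi_1,\ldots,\varphi_d,T)\in\Z[T]^{d+1}$, and since $a_n=n^{o(1)}$ one has the trivial bound $|T_{\va,\bpsi}((\vz,h/N);M)|\le\sum_{n\le M}|a_n|\ll MN^{o(1)}$ for all $1\le M\le N$; hence the hypothesis of Lemma~\ref{lem:con gen} is met at every point with $\rho=1$ (that lemma needs no Wronskian condition on $\bpsi$).

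Each $\varphi_j$ is a nonconstant polynomial, hence eventually of constant sign; let $\cR^*(\vu,\bzeta)$ be the orthant of $\cR(\vu,\bzeta)$ consisting of the $\vv$ with $v_j\ge u_j$ when $\varphi_j(n)>0$ for all large $n$ and $v_j\le u_j$ when $\varphi_j(n)<0$ for all large $n$. This is a single sub-box, so $\lambda(\cR^*(\vu,\bzeta))=2^{-d}\lambda(\cR(\vu,\bzeta))\gg\lambda(\cR(\vu,\bzeta))$. Now put $\omega=N^{\alpha-1-\eps}$, which is positive and, since $\alpha<1$, is $o(1)$ and in particular $O(1)$. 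For $\vv\in\cR^*(\vu,\bzeta)$ the displacements obey $|v_j-u_j|\le\zeta_j\le N^{\alpha-e_j-1-\eps}=\omega N^{-e_j}$ with the sign prescribed by Lemma~\ref{lem:con gen}, so applying that lemma in dimension $d+1$ (keeping the last coordinate $h/N$ fixed) with $\rho=1$ gives
$$
I_h(\vv)-I_h(\vu)\ll\omega N^{1+o(1)},
$$
where the implied constant and the quantity $N^{o(1)}$ are uniform in $h$, the latter arising only from $\sum_{n\le M}|a_n|$.

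Summing over $h$ and using $\sum_{h=-N}^{N}(|h|+1)^{-1}\ll\log N=N^{o(1)}$ then yields
$$
|W_{\va,\bphi}(\vv;N)-W_{\va,\bphi}(\vu;N)|\le\sum_{h=-N}^{N}\frac{|I_h(\vv)-I_h(\vu)|}{|h|+1}\ll\omega N^{1+o(1)}=N^{\alpha-\eps+o(1)}.
$$
Since $\eps>0$ is fixed, the exponent $\alpha-\eps+o(1)$ is $<\alpha$ for all sufficiently large $N$, so the right-hand side is $<N^{\alpha}/2$; together with the hypothesis $|W_{\va,\bphi}(\vu;N)|\ge N^{\alpha}$ and the triangle inequality this gives $|W_{\va,\bphi}(\vv;N)|\ge N^{\alpha}/2$ for every $\vv\in\cR^*(\vu,\bzeta)$, which is the assertion. (If $\vu$ lies near the boundary of $[0,1)^d$, one reads $\cR(\vu,\bzeta)$ modulo $1$ in each coordinate, which is harmless since all the sums are $1$-periodic in each variable.)

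I do not expect a genuine obstacle here: the one step that must be done with care is to pass to the completed sums $I_h$ \emph{before} invoking continuity, so as to sidestep the $\vu$-dependence of the weights in~\eqref{eq:W}; after that, the trivial bound makes Lemma~\ref{lem:con gen} applicable with $\rho=1$ at every point, and the rest is the orthant restriction (needed to land inside the one-sided boxes of Lemma~\ref{lem:con gen}) together with routine $\eps$-bookkeeping.
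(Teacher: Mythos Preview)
Your proof is correct and follows essentially the same route as the paper: restrict to the orthant of $\cR(\vu,\bzeta)$ satisfying the sign condition~\eqref{eq:non-negative}, apply Lemma~\ref{lem:con gen} with $\rho=1$ and $\omega=N^{\alpha-1-\eps}$ to each inner sum $I_h$ separately, then sum over $h$. The paper phrases the application of Lemma~\ref{lem:con gen} as replacing the weights $a_n$ by $a_n\e(hn/N)$ (as made explicit in the proof of Lemma~\ref{lem:cont-linear}) rather than your device of adjoining an extra fixed coordinate, but this is the same computation; your explicit remark about why one must not use the representation~\eqref{eq:W} is a nice clarification that the paper leaves implicit.
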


\begin{proof}  Let   $\cR^*(\vu, \bzeta)$ be the set of vectors $\vv\in \cR(\vu, \bzeta)$  which satsify  the ``non-negativity condition"~\eqref{eq:non-negative}. 
By Lemma~\ref{lem:con gen}, applied with $\rho = 1$ and $\omega = N^{\alpha-1-\eps}$,  for $\vv \in \cR^*(\vu, \bzeta)$ 
we have 
\begin{align*}
 \sum_{n=1}^{N} a_n  & \e\(hn/N + u_1 \varphi_1(n)+\ldots + u_d\varphi_d(n) \) \\
&  -   \sum_{n=1}^{N} a_n  \e\(h n/N + v_1 \varphi_1(n)+\ldots + v_d\varphi_d(n) \) \ll N^{\alpha - \varepsilon}.
\end{align*} 
The result  follows from the definition of $W_{\va, \bphi}(\vv; N)$ in Lemma~\ref{lem:control}.  
\end{proof}

\begin{lemma} 
\label{lem:cont-linear} 
Let  $\bphi \in \Z[T]^d$ be such that 
\begin{equation}
\label{eq:linear}
\min_{k<j\le d} \deg \varphi_j =1.
\end{equation}
Let $0 <\alpha<t\le 1$ 
and let $\varepsilon>0$ be  sufficiently small.  
Suppose that  $|W_{\va, \bphi}(\vu; N)|\ge N^{\alpha}$ for  some $\vu=(\vx, \vy)\in \Tor$ and  
\begin{equation} 
\label{eq:uniform-y}
\sup_{\vy\in \T_{d-k}} |T_{\va, \bphi}(\vx, \vy; M)|\le CM^t, \quad \forall 
\,  M\le N,
\end{equation}  
for some  constant $C$, then 
 for 
$$
0<\zeta_j\le N^{\alpha-e_j-t-\eps},  \qquad j =1, \ldots, d, 
$$ 
there is a set $\cR^*(\vu, \bzeta)\subseteq \cR(\vu, \bzeta)$ with  
$$
\lambda(\cR^*(\vu, \bzeta))\gg \lambda(\cR(\vu, \bzeta)),
$$
such that 
$$
|W_{\va, \bphi}( \vv; N)|\ge  N^{\alpha}/2
$$
holds for  any $\vv\in \cR^*(\vu, \bzeta)$  
provided  that $N$ is large enough. 
\end{lemma}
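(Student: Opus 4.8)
The plan is to repeat the proof of Lemma~\ref{lem:cont-gen} essentially verbatim, with the single improvement that each application of Lemma~\ref{lem:con gen} is made with the exponent $\rho=t<1$ in place of the trivial $\rho=1$. To legitimise $\rho=t$ for the completed inner sums one must first absorb the completion frequency $hn/N$ into the coefficient of a linear member of $\bphi$.

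\textbf{Absorbing the completion frequency.} By~\eqref{eq:linear} choose an index $k<j_0\le d$, write $j_0=k+j_0'$, and put $\varphi_{j_0}(T)=aT+c$ with $a\in\Z\setminus\{0\}$. Since $\varphi_{j_0}(n)\in\Z$, for every integer $h$ and every $1\le M\le N$ one has
$$
\sum_{n=1}^{M} a_n\e\(\frac{hn}{N} + \sum_{j=1}^k x_j\varphi_j(n)+\sum_{j=1}^{d-k} y_j\varphi_{k+j}(n)\) = \e\(-\frac{hc}{aN}\)\,T_{\va,\bphi}\(\vx,\vy^{(h)};M\),
$$
where $\vy^{(h)}\in\T_{d-k}$ agrees with $\vy$ except that its $j_0'$-th entry equals $y_{j_0'}+h/(aN)$ taken modulo one. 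Hence, by~\eqref{eq:uniform-y}, the left-hand side is bounded by $CM^t$ for all $1\le M\le N$, uniformly in $h$; this is the only point at which it matters that the linear polynomial is attached to $\vy$ rather than to $\vx$ (a shift of an $\vx$-coefficient would move us to a different point $\vx$, which is not covered by the supremum over $\vy$ in~\eqref{eq:uniform-y}).

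\textbf{Continuity, term by term.} Fix $h$ with $|h|\le N$ and apply Lemma~\ref{lem:con gen} to the inner sum $\sum_{n}a_n\e(hn/N+\sum_i v_i\varphi_i(n))$, regarded (via the auxiliary monomial $T$) as a sum of the type treated by that lemma in dimension $d+1$ but with the first coordinate $h/N$ held fixed; the required partial-sum bound with $\rho=t$ at the base point is exactly the estimate of the previous paragraph. Take $\omega=N^{\alpha-t-\eps}$, which is $o(1)$ since $\alpha<t$. Then for $\vv$ lying in the one-sided box determined by the eventual signs of $\varphi_1,\dots,\varphi_d$ (these signs, hence the admissible directions, are independent of $h$) with $|v_i-u_i|\le\omega N^{-e_i}=N^{\alpha-e_i-t-\eps}$, Lemma~\ref{lem:con gen} gives
$$
\sum_{n=1}^{N} a_n\e\(\frac{hn}{N} + \sum_{i=1}^d v_i\varphi_i(n)\) - \sum_{n=1}^{N} a_n\e\(\frac{hn}{N} + \sum_{i=1}^d u_i\varphi_i(n)\)\ll \omega N^{t+o(1)} = N^{\alpha-\eps+o(1)},
$$
with an \emph{absolute} implied constant.

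\textbf{Conclusion and main obstacle.} Let $\cR^*(\vu,\bzeta)$ be the set of $\vv\in\cR(\vu,\bzeta)$ satisfying the non-negativity condition~\eqref{eq:non-negative}; being a product of half-intervals, it has $\lambda(\cR^*(\vu,\bzeta))\ge 2^{-d}\lambda(\cR(\vu,\bzeta))\gg\lambda(\cR(\vu,\bzeta))$, and for $\vv\in\cR^*(\vu,\bzeta)$ the constraint $\zeta_j\le N^{\alpha-e_j-t-\eps}$ is precisely what the previous step requires, simultaneously for every $h$. Summing the term-by-term estimate against the weights $1/(|h|+1)$ over $-N\le h\le N$ and using $\sum_{|h|\le N}(|h|+1)^{-1}\ll\log N$, the definition of $W_{\va,\bphi}$ from Lemma~\ref{lem:control} yields $|W_{\va,\bphi}(\vv;N)-W_{\va,\bphi}(\vu;N)|\ll N^{\alpha-\eps+o(1)}$, which is at most $N^\alpha/2$ once $N$ is large enough; together with $|W_{\va,\bphi}(\vu;N)|\ge N^\alpha$ this gives $|W_{\va,\bphi}(\vv;N)|\ge N^\alpha/2$. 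I expect the only real obstacle to be the absorption step: one must verify that folding $hn/N$ into a linear coefficient turns every completed inner sum into a genuine $T_{\va,\bphi}(\vx,\vy';M)$ covered by the uniform-in-$\vy$ hypothesis, uniformly over the $2N+1$ values of $h$ and compatibly with a single choice of one-sided perturbation box; everything else is a line-by-line adaptation of the proof of Lemma~\ref{lem:cont-gen}.
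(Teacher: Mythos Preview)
Your proof is correct and follows essentially the same route as the paper's: absorb the completion frequency $hn/N$ into the coefficient of the linear polynomial attached to $\vy$, invoke the uniform-in-$\vy$ hypothesis~\eqref{eq:uniform-y} to obtain the partial-sum bound $CM^t$ uniformly in $h$, apply Lemma~\ref{lem:con gen} with $\rho=t$ and $\omega=N^{\alpha-t-\eps}$ to each inner sum, and then sum over $h$ against $1/(|h|+1)$. The only cosmetic difference is that the paper phrases the application of Lemma~\ref{lem:con gen} as ``replace $a_n$ by $a_n\e(hn/N)$'' rather than your ``dimension $d+1$ with the $h/N$ coordinate held fixed''; these are equivalent and the paper's phrasing is marginally cleaner.
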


\begin{proof}   
From~\eqref{eq:linear}, without loss of generality, we assume that $\deg \varphi_d=1$ and hence 
$$
\varphi_d(n)=\varrho_1 n+\varrho_2
$$
for some real numbers $\varrho_1, \varrho_2$ with $\varrho_1\neq 0$. For any integer $h$  we write
\begin{equation}\label{eq:moving}
\begin{aligned}
 \e&\(hn/N+ u_1 \varphi_1(n)+\ldots + u_d\varphi_d(n) \)\\
 &=\e\( u_1 \varphi_1(n)+\ldots + (u_d\varrho_1+h/N) n+\varrho_2 u_d \).
\end{aligned}
\end{equation}
For any $1\le M\le N$, by~\eqref{eq:uniform-y}, with the vector of coefficients  
$$(u_1, u_2, \ldots,  u_{d-1}, u_d \varrho_1+h/N), $$
  we obtain 
$$
\left|\sum_{n=1}^{M} a_n   \e\( u_1 \varphi_1(n)+\ldots u_{d-1}\varphi_{d-1}(n)+ (u_d\varrho_1+h/N) n \) \right |\le CM^{t}.
$$
Combining this with~\eqref{eq:moving}  we derive  
$$
\left|\sum_{n=1}^{M} a_n   \e\(hn/N+ u_1 \varphi_1(n)+\ldots + u_d\varphi_d(n) \) \right| \le CM^t.
$$
By Lemma~\ref{lem:con gen}, applied with the coefficients $a_n\e(hn/N)$ instead of $a_n$, $\rho = t$ and $\omega = N^{\alpha-t-\eps}$, we have 
\begin{align*}
 \sum_{n=1}^{N} &a_n  \e\(hn/N+ u_1 \varphi_1(n)+\ldots + u_d\varphi_d(n) \) \\
&  \quad -   \sum_{n=1}^{N} a_n  \e\(hn/N + v_1 \varphi_1(n)+\ldots + v_d\varphi_d(n) \) \\
& \qquad\qquad \qquad\qquad\qquad \qquad \qquad\qquad \quad  \ll \omega N^{t} \ll N^{\alpha - \varepsilon}.
\end{align*} 
By the definition of $W_{\va, \bphi}(\vv; N)$ in Lemma~\ref{lem:control}  we obtain 
$$
|W_{\va, \bphi}( \vu; N)-W_{\va, \bphi}( \vv; N)|\ll N^{\alpha-\eps} \log N,
$$ 
the result now follows for all large enough $N$.
\end{proof}

We note that a similar concept of continuity of Weyl sums has also played a major role
in a different point of view on the distribution of Weyl sums~\cite{Brud,BD}.

\subsection{Distribution of large values of exponential sums}
\label{subsec:distribution}

We adapt the arguments of~\cite[Lemma~2.2]{Wool3} to our setting. 

First we show the following useful box counting result. We note that any better bound of the  exponent of $N$ 
 immediately yields an improvement of  our results.

Let $0<\alpha<1$ and let  $\eps$  be  sufficiently small. For each $j=1, \ldots, d$ let 
\begin{equation}
\label{eq:zetaj}
\zeta_j=1/ \rf{N^{e_j+1+\eps-\alpha}},
\end{equation}
where  $e_1, \ldots e_d$ are as~\eqref{eq:deg phi}.   

We divide $\Tor$ into 
$$
U = \(\prod_{j=1}^d \zeta_j\) ^{-1}
$$ 
boxes of the form
$$
[n_1\zeta_1, (n_1+1)\zeta_1)\times \ldots \times [n_d\zeta_d, (n_d+1)\zeta_d),
$$
where $n_j=1, \ldots, 1/\zeta_j$ for each $j=1, \ldots, d$. Let $\fR$ be the collection of these boxes, and 
\begin{equation}
\label{eq:set R}
\widetilde \fR=\{\cR \in \fR:~\exists\, \vu\in \cR \text{ with } |W_{\va, \bphi}( \vu; N)|\ge N^{\alpha}\}.
\end{equation}

\begin{lemma}
\label{lem:counting}
In the above notation,  we have 
$$
\# \widetilde \fR
 \le U N^{s(d)(1-2\alpha)+o(1)}.
$$
\end{lemma}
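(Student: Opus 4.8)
The plan is to bound the number of ``heavy'' boxes $\cR \in \widetilde\fR$ by converting the single-box-witness condition in~\eqref{eq:set R} into a genuine lower bound on $|W_{\va,\bphi}(\vv;N)|$ on a positive-measure portion of each box, and then integrate. First I would apply Lemma~\ref{lem:cont-gen} with the present choice of $\bzeta$: note that $\zeta_j = 1/\lceil N^{e_j+1+\eps-\alpha}\rceil \le N^{\alpha-e_j-1-\eps}$, which is exactly the hypothesis of that lemma. So for each $\cR \in \widetilde\fR$, picking a witness $\vu \in \cR$ with $|W_{\va,\bphi}(\vu;N)| \ge N^\alpha$, Lemma~\ref{lem:cont-gen} produces a subset $\cR^*(\vu,\bzeta) \subseteq \cR(\vu,\bzeta)$ of measure $\gg \lambda(\cR(\vu,\bzeta))$ on which $|W_{\va,\bphi}(\vv;N)| \ge N^\alpha/2$. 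A small bookkeeping point: $\cR(\vu,\bzeta)$ is a box of side lengths $2\bzeta$ centred at $\vu$, so it is not literally one of the grid boxes in $\fR$, but it is covered by a bounded number $O(1)$ of neighbouring grid boxes; hence on $\cR$ together with its $O(1)$ neighbours there is a set of measure $\gg \lambda(\cR) = \prod_j \zeta_j = 1/U$ on which $|W_{\va,\bphi}|\ge N^\alpha/2$.

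Next I would use this to lower-bound a moment integral. Let $\fG \subseteq \Tor$ be the union over all $\cR \in \widetilde\fR$ of these sets of large values; by the covering remark, each grid box is used $O(1)$ times, so $\lambda(\fG) \gg (\#\widetilde\fR)/U$. On $\fG$ we have $|W_{\va,\bphi}(\vv;N)| \ge N^\alpha/2$, hence
$$
\int_{\Tor} |W_{\va,\bphi}(\vu;N)|^{2s(d)} \, d\vu \;\ge\; \int_{\fG} |W_{\va,\bphi}(\vu;N)|^{2s(d)} \, d\vu \;\gg\; \lambda(\fG)\, N^{2\alpha s(d)} \;\gg\; \frac{\#\widetilde\fR}{U}\, N^{2\alpha s(d)}.
$$
On the other hand, Corollary~\ref{cor:MVT-W} gives
$$
\int_{\Tor} |W_{\va,\bphi}(\vu;N)|^{2s(d)} \, d\vu \;\le\; N^{s(d)+o(1)}.
$$
Combining the two displays and rearranging yields
$$
\#\widetilde\fR \;\ll\; U\, N^{s(d) + o(1)}\, N^{-2\alpha s(d)} \;=\; U\, N^{s(d)(1-2\alpha) + o(1)},
$$
which is the claimed bound.

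The only genuinely delicate point is the geometric bookkeeping in the overlap argument — making sure that when we replace each witness box $\cR$ by the continuity box $\cR(\vu,\bzeta)$ and then re-cover the latter by grid boxes, no grid box gets charged more than $O(1)$ times, so that $\lambda(\fG)$ is genuinely comparable to $(\#\widetilde\fR)\prod_j\zeta_j$ and not smaller. This is routine (each point of $\Tor$ lies in $O(1)$ boxes $\cR(\vu,\bzeta)$ centred in distinct grid boxes, since the $\cR(\vu,\bzeta)$ have comparable dimensions to the grid boxes), but it has to be stated carefully; everything else is a direct application of Lemma~\ref{lem:cont-gen} and Corollary~\ref{cor:MVT-W}. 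One should also note that $\alpha<1$ and $\eps$ small keep all the exponents in~\eqref{eq:zetaj} positive so that $\fR$ is a legitimate finite partition, and that the factor $N^{o(1)}$ comfortably absorbs the implied constants and the $O(1)$ covering multiplicities.
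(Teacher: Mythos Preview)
Your proof is correct and follows essentially the same route as the paper: apply Lemma~\ref{lem:cont-gen} to propagate the large-value witness to a set of measure $\gg \prod_j \zeta_j$, then compare the resulting lower bound on $\int_{\Tor}|W_{\va,\bphi}|^{2s(d)}$ with the upper bound from Corollary~\ref{cor:MVT-W}. In fact you are more careful than the paper on the one geometric point: the paper simply writes ``there is a set $\cR^\ast\subseteq\cR$'' and uses disjointness of grid boxes, whereas Lemma~\ref{lem:cont-gen} only guarantees $\cR^\ast(\vu,\bzeta)\subseteq\cR(\vu,\bzeta)$, which need not lie inside the grid box $\cR$; your bounded-overlap argument (each point of $\Tor$ lies in at most $3^d$ boxes $\cR(\vu,\bzeta)$ with $\vu$ in distinct grid cells) is the clean way to close this.
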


\begin{proof}
Let $\cR\in \fR$.  
By Lemma~\ref{lem:cont-gen}  if $|W_{\va, \bphi}( \vu; N)|\ge N^{\alpha}$ for some $\vu\in \cR$, 
then there is a set $\cR^*\subseteq \cR$  with   
$$
\lambda(\cR^*)\gg \lambda(\cR).
$$
such that  for any $\vv\in \cR^*$ we have $|W_{\va, \bphi}(\vv; N)|\ge N^{\alpha}/2$. 
Combining this with Corollary~\ref{cor:MVT-W}  we have   
$$
N^{ 2s(d) \alpha} \# \widetilde \fR\prod_{j=1}^{d}\zeta_j \ll \int_{\Tor} |W_{\va, \bphi}( \vu; N)|^{2s(d)}\, d\vu\le N^{s(d)+o(1)},
$$
which yields  the desired bound.
\end{proof}

Note that the above bound of $\# \widetilde \fR$ is nontrivial when $1/2<\alpha<1$.

\begin{cor}
\label{cor:Markov}
Let $0<\alpha<1$.  Then 
\begin{align*}
\lambda  (\{ \vx\in \T_k:~ \exists\, \vy \in \T_{d-k}  &\text{ with } |W_{\va, \bphi}(\vx, \vy; N)|\ge N^{\alpha}  \}   ) \\
 & \le N^{s(d)-2\alpha s(d)+\sigma_k(\bphi)+(d-k)(1-\alpha)+o(1)}.
\end{align*}
\end{cor}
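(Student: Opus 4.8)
The plan is to decompose the target set in $\T_k$ into the ``bad'' boxes from Lemma~\ref{lem:counting} and count how many product boxes in the $\vx$-coordinates can contain a point lying under a bad box of $\fR$. First I would fix $\alpha \in (0,1)$ and the parameters $\zeta_1, \ldots, \zeta_d$ as in~\eqref{eq:zetaj}, so that $\zeta_j^{-1} \asymp N^{e_j+1+\eps-\alpha}$, and set $\widetilde\fR$ as in~\eqref{eq:set R}. Observe that if $\vx \in \T_k$ is such that $|W_{\va, \bphi}(\vx, \vy; N)| \ge N^\alpha$ for some $\vy \in \T_{d-k}$, then $\vu = (\vx, \vy) \in \Tor$ lies in some box $\cR \in \widetilde\fR$. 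Hence the set in question is contained in the projection onto the first $k$ coordinates of $\bigcup_{\cR \in \widetilde\fR} \cR$, which is a union of $k$-dimensional boxes each of volume $\prod_{j=1}^{k}\zeta_j$. The number of distinct such $k$-dimensional boxes arising this way is at most $\#\widetilde\fR$, but that overcounts badly; the point is that projection collapses the last $d-k$ coordinates, so several boxes of $\fR$ over the same $\vx$-box are counted once. To be safe and simple I would just bound the measure of the projection by $\#\widetilde\fR \cdot \prod_{j=1}^k \zeta_j$, i.e.\ not even exploit the collapsing, and check that this already gives the claimed exponent.

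Carrying this out: by Lemma~\ref{lem:counting} we have $\#\widetilde\fR \le U N^{s(d)(1-2\alpha)+o(1)}$ with $U = \(\prod_{j=1}^d \zeta_j\)^{-1}$. Therefore
\begin{align*}
\lambda\(\pi_{d,k}\(\bigcup_{\cR\in\widetilde\fR}\cR\)\)
&\le \#\widetilde\fR \cdot \prod_{j=1}^k \zeta_j
\le U N^{s(d)(1-2\alpha)+o(1)} \prod_{j=1}^k \zeta_j \\
&= \(\prod_{j=1}^d \zeta_j\)^{-1}\(\prod_{j=1}^k \zeta_j\) N^{s(d)(1-2\alpha)+o(1)}
= \(\prod_{j=k+1}^d \zeta_j\)^{-1} N^{s(d)(1-2\alpha)+o(1)}.
\end{align*}
Now $\(\prod_{j=k+1}^d \zeta_j\)^{-1} \asymp \prod_{j=k+1}^d N^{e_j+1+\eps-\alpha} = N^{\sigma_k(\bphi) + (d-k)(1-\alpha) + (d-k)\eps}$, using $\sum_{j=k+1}^d e_j = \sigma_k(\bphi)$ from~\eqref{eq:sigmak} (recalling $e_j = \deg\varphi_j$). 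Plugging this in, the exponent becomes
$$
s(d)(1-2\alpha) + \sigma_k(\bphi) + (d-k)(1-\alpha) + o(1) = s(d) - 2\alpha s(d) + \sigma_k(\bphi) + (d-k)(1-\alpha) + o(1),
$$
since $\eps$ can be absorbed into the $o(1)$ as $\eps \to 0$ and $N \to \infty$. This is exactly the claimed bound, because the set appearing in the statement is contained in $\pi_{d,k}\(\bigcup_{\cR\in\widetilde\fR}\cR\)$.

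The only subtlety — and the step I would be most careful with — is the translation between ``$\exists\,\vy$ with $|W_{\va,\bphi}(\vx,\vy;N)| \ge N^\alpha$'' and membership of $(\vx,\vy)$ in a bad box: one must make sure the grid $\fR$ on $\Tor$ restricts to a grid on $\T_k$ compatibly, so that the projection of a union of $\fR$-boxes is genuinely a union of $\prod_{j\le k}\zeta_j$-boxes with no boundary-overlap issues; since the $\zeta_j$ are chosen so that $1/\zeta_j$ is an integer, this is clean. Everything else is the bookkeeping of exponents above, and no additional ideas beyond Lemma~\ref{lem:counting} and Corollary~\ref{cor:MVT-W} (already invoked inside Lemma~\ref{lem:counting}) are needed.
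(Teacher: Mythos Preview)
Your proposal is correct and follows essentially the same route as the paper: define $\fU = \bigcup_{\cR\in\widetilde\fR}\cR$, note that the target set is contained in $\pi_{d,k}(\fU)$, bound $\lambda(\pi_{d,k}(\fU)) \le \#\widetilde\fR \prod_{j=1}^k \zeta_j$, and then apply Lemma~\ref{lem:counting} together with the choice of $\bzeta$ in~\eqref{eq:zetaj}, letting $\eps\to 0$. Your write-up is simply a more explicit version of the paper's short argument, with the exponent bookkeeping spelled out.
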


\begin{proof}  We fix some sufficiently small   $\eps>0$ and define  the set 
$$
\fU = \bigcup_{\cR\in \widetilde \fR} \cR. 
$$
Observe that 
$$
\{ \vx\in \T_k:~\exists\, \vy \in \T_{d-k}  \text{ with }  |W_{\va, \bphi}(\vx, \vy; N)|\ge N^{\alpha}  \} 
 \subseteq \pi_{d, k} \( \fU \).
$$
Clearly we have 
$$
\lambda \left ( \pi_{d, k}   \(\fU \) \right ) \le \# \widetilde \fR \prod_{j=1}^{k} \zeta_j.
$$
By Lemma~\ref{lem:counting} and the choice of $\bzeta$ in~\eqref{eq:zetaj}, since $\eps$ is arbitrary, we now obtain the desired result. 
\end{proof}

Applying Lemma~\ref{lem:cont-linear}, in analogy with  Corollary~\ref{cor:Markov}, we obtain the  following.

\begin{cor}
\label{cor:box-linear}
Let  $\bphi \in \Z[T]^d$ be such that  
$$
\min_{k<j\le d} \deg \varphi_j =1.
$$
Let $\Omega\subseteq \T_k$ with $\lambda(\Omega)>0$ and let $0<\alpha<t\le 1$.  Suppose that  there exits a positive constant  $C$ such that  for all $\vx\in \Omega$  we have 
$$
\sup_{\vy\in \T_{d-k}} |T_{\va, \bphi}(\vx, \vy; N)| \le C N^{t},  \qquad \forall \, N\in \N.
$$ 
Let 
\begin{equation}
\label{eq:Bi}
\cB_{\Omega, N} = \left\{ \vx\in \Omega: ~ \exists\, \vy \in \T_{d-k} \text{ with } |W_{\va, \bphi}( \vx, \vy; N_{i})|\ge N^{\alpha}  \right\},
\end{equation}
then we obtain
$$
\lambda(\cB_{\Omega, N})\le  N^{s(d)-2\alpha s(d)+\sigma_k(\bphi)+(d-k)(t-\alpha) +o(1)}. 
$$
\end{cor}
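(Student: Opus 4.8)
The plan is to mirror the proof of Corollary~\ref{cor:Markov}, replacing the input continuity lemma (Lemma~\ref{lem:cont-gen}) by the stronger Lemma~\ref{lem:cont-linear}, which under the linear-polynomial hypothesis and the uniform bound $\sup_{\vy}|T_{\va,\bphi}(\vx,\vy;M)|\le CM^t$ allows us to enlarge the admissible box side lengths from $N^{\alpha-e_j-1-\eps}$ to $N^{\alpha-e_j-t-\eps}$. So first I would fix a sufficiently small $\eps>0$ and, instead of~\eqref{eq:zetaj}, set $\zeta_j = 1/\lceil N^{e_j+t+\eps-\alpha}\rceil$ for $j=1,\ldots,d$, and partition $\Tor$ into the corresponding $U' = \(\prod_{j=1}^d \zeta_j\)^{-1}$ boxes, with $\fR'$ the collection of these boxes and $\widetilde\fR'$ the sub-collection meeting $\{\vu:|W_{\va,\bphi}(\vu;N)|\ge N^\alpha\}$ (with the extra constraint that $\vu$ be of the form $(\vx,\vy)$ with $\vx\in\Omega$, so as to be able to invoke the hypothesis of the corollary).

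Next I would re-run the box-counting argument of Lemma~\ref{lem:counting} in this new scale. For each box $\cR\in\widetilde\fR'$, pick $\vu=(\vx,\vy)\in\cR$ with $\vx\in\Omega$ and $|W_{\va,\bphi}(\vu;N)|\ge N^\alpha$; since $\vx\in\Omega$ the hypothesis $\sup_{\vy}|T_{\va,\bphi}(\vx,\vy;M)|\le CN^t$ holds for all $M$, so Lemma~\ref{lem:cont-linear} applies and produces $\cR^*\subseteq\cR$ with $\lambda(\cR^*)\gg\lambda(\cR)$ on which $|W_{\va,\bphi}(\vv;N)|\ge N^\alpha/2$. Summing over $\widetilde\fR'$ and applying Corollary~\ref{cor:MVT-W} gives
$$
N^{2s(d)\alpha}\,\#\widetilde\fR'\prod_{j=1}^d\zeta_j \ll \int_{\Tor}|W_{\va,\bphi}(\vu;N)|^{2s(d)}\,d\vu \le N^{s(d)+o(1)},
$$
hence $\#\widetilde\fR' \le U'\,N^{s(d)(1-2\alpha)+o(1)}$, the exact analogue of Lemma~\ref{lem:counting} in the new scale.

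Then I would project, exactly as in Corollary~\ref{cor:Markov}: $\cB_{\Omega,N}\subseteq\pi_{d,k}\(\bigcup_{\cR\in\widetilde\fR'}\cR\)$, so $\lambda(\cB_{\Omega,N})\le \#\widetilde\fR'\prod_{j=1}^k\zeta_j$. Plugging in the bound on $\#\widetilde\fR'$, cancelling the $\prod_{j=1}^d\zeta_j^{-1}=U'$ factor, and using $\prod_{j=k+1}^d\zeta_j^{-1} = N^{\sum_{j=k+1}^d(e_j+t+\eps-\alpha)+O(1)} = N^{\sigma_k(\bphi)+(d-k)(t-\alpha)+o(1)}$ (recall $\sigma_k(\bphi)=\sum_{j=k+1}^d\deg\varphi_j=\sum_{j=k+1}^d e_j$), we get
$$
\lambda(\cB_{\Omega,N}) \le N^{s(d)(1-2\alpha)+\sigma_k(\bphi)+(d-k)(t-\alpha)+o(1)} = N^{s(d)-2\alpha s(d)+\sigma_k(\bphi)+(d-k)(t-\alpha)+o(1)},
$$
letting $\eps\to0$ since it was arbitrary. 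The only genuine point requiring care — the ``main obstacle'', though it is minor — is making sure the hypotheses of Lemma~\ref{lem:cont-linear} are legitimately available for the chosen representative $\vu\in\cR$: the lemma requires $0<\alpha<t\le1$ (given) and the uniform-in-$\vy$ polynomial bound~\eqref{eq:uniform-y} for all $M\le N$, which is exactly why we restrict the boxes of $\widetilde\fR'$ to have their $\vx$-projection meet $\Omega$; everything else is a transcription of the proof of Lemma~\ref{lem:counting} and Corollary~\ref{cor:Markov} with $1$ replaced by $t$ in the exponents of the $\zeta_j$. (I note the definition~\eqref{eq:Bi} in the statement writes $N_i$ for what should be $N$; I would silently read it as $N$.)
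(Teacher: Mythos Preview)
Your proposal is correct and follows essentially the same approach as the paper: both redefine the box side lengths as $\zeta_j^* = 1/\lceil N^{e_j+t+\eps-\alpha}\rceil$, restrict the relevant box collection to those meeting $\Omega\times\T_{d-k}$ so that Lemma~\ref{lem:cont-linear} applies, re-run the counting argument of Lemma~\ref{lem:counting} with Lemma~\ref{lem:cont-linear} in place of Lemma~\ref{lem:cont-gen}, and then project as in Corollary~\ref{cor:Markov}. Your write-up is in fact more detailed than the paper's (which merely sketches these steps), and you correctly flag the $N_i$/$N$ typo in the statement.
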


\begin{proof} We fix some sufficiently small   $\eps>0$ and define
$$
  \zeta_j^*=1/ \rf{N^{j+t +\eps-\alpha}} \mand
 U^* =\( \prod_{j=1}^d \  \zeta_j^*\)^{-1},
$$ 
and we  divide $\Tor$ into $ U^*$ rectangles in a natural way. Let $\fR^{*}$ be the collection of these rectangles.

We also define an analogue of $\widetilde \fR$~\eqref{eq:set R} as 
$$
\widetilde {\fR}^{*}=\{\cR \in \fR:~\exists\, \vu\in \cR\text{ with }  \pi_{d, k} (\vu)\in \Omega \  \& \
|W_{d}( \vu; N)|\ge N^{\alpha}\}.
$$

Using Lemma~\ref{lem:cont-linear}  instead of Lemma~\ref{lem:cont-gen}  in the 
proof  Lemma~\ref{lem:counting} we obtain 
$$
\# \widetilde {\fR}^*\ll N^{s(d)-2s(d)\alpha} U^*.
$$
Applying the similar argument as in Corollary~\ref{cor:Markov} we obtain the result.
\end{proof}

\subsection{Orthogonal  projections of boxes}

We start with the following general result  which is perhaps well known.

\begin{lemma}
\label{lem:geometric}
Let $\cR\subseteq \R^d$ be a box  with the side lengths $h_1\ge \ldots \ge h_d$. Then  
for all $\cV \in \cG(d, k)$ we have  
$$
\lambda(\pi_{\cV}(\cR))\ll \prod_{i=1}^k h_i, 
$$
where the implied constant depends on $d$ and $k$ only. 
\end{lemma}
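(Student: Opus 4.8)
The plan is to bound the Lebesgue measure of the projection of the box $\cR$ onto an arbitrary $k$-dimensional subspace $\cV$ by first reducing to the case of a \emph{cube} and then handling the cube by an elementary geometric argument. First I would write $\cR = \vu + \prod_{i=1}^d [-h_i/2, h_i/2)$ (translation does not affect the measure of the projection, so I may assume $\cR$ is centred at the origin). The key observation is that $\cR$ is contained in the box $\cR' = \prod_{i=1}^d [-h_1/2, h_1/2)$ only in the first $k$ coordinate directions — that is, I want to ``inflate'' the largest $k$ sides but keep control of the rest. More precisely, $\cR$ is contained in $\cQ \times \cR''$ where $\cQ = \prod_{i=1}^k [-h_1/2, h_1/2) \subseteq \R^k$ (a $k$-cube of side $h_1$, embedded in the span of the first $k$ standard basis vectors) and $\cR'' = \prod_{i=k+1}^d [-h_i/2, h_i/2)$ lives in the complementary $(d-k)$-dimensional coordinate subspace.

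The next step is to control $\lambda(\pi_\cV(\cR))$ by $\lambda(\pi_\cV(\cQ \times \cR''))$. Since projections are linear and $\pi_\cV(A+B) \subseteq \pi_\cV(A) + \pi_\cV(B)$ for sets $A, B$, I have $\pi_\cV(\cQ \times \cR'') \subseteq \pi_\cV(\cQ') + \pi_\cV(\cR''')$ where $\cQ'$ is $\cQ$ embedded in $\R^d$ (zero in the last $d-k$ coords) and $\cR'''$ is $\cR''$ embedded in $\R^d$ (zero in the first $k$ coords). Now $\pi_\cV(\cR''')$ is the projection of a box of dimension at most $d-k$ onto the $k$-dimensional space $\cV$, hence it is contained in a ball of radius $O(h_{k+1}) = O(h_1)$ inside $\cV$, and similarly $\pi_\cV(\cQ')$ sits inside a ball of radius $O(h_1)$. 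Therefore the whole projection is contained in a $k$-dimensional ball of radius $O(h_1)$, giving $\lambda(\pi_\cV(\cR)) \ll h_1^k$. This is the \emph{right} bound only when all $h_i$ are comparable; to get the sharp $\prod_{i=1}^k h_i$ I need to be more careful, so the argument above is only a warm-up.

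For the sharp bound I would instead argue as follows. The projection $\pi_\cV(\cR)$ is the image of the box $\cR$ under the linear map $\pi_\cV$, and $\cR$ is the image of the unit cube $[0,1)^d$ under the diagonal map $D = \mathrm{diag}(h_1, \ldots, h_d)$ (up to translation). So $\pi_\cV(\cR) = \pi_\cV \circ D([0,1)^d)$, the image of the unit cube under the composite linear map $A = \pi_\cV \circ D : \R^d \to \cV \cong \R^k$. The $k$-dimensional volume of the image of the unit cube under a linear map $A : \R^d \to \R^k$ equals $\sqrt{\det(A A^{\mathsf T})}$, and by the Cauchy--Binet formula this is $\bigl(\sum_{|I|=k} (\det A_I)^2\bigr)^{1/2}$ where $A_I$ is the $k \times k$ submatrix of $A$ with columns indexed by $I \subseteq \{1,\ldots,d\}$. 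Each column $j$ of $A$ is $h_j \pi_\cV(e_j)$, so $\det A_I = \bigl(\prod_{j \in I} h_j\bigr) \det\bigl((\pi_\cV(e_j))_{j \in I}\bigr)$, and the latter determinant is at most $1$ in absolute value since the $\pi_\cV(e_j)$ are projections of orthonormal vectors (so each has norm $\le 1$ and Hadamard's inequality applies). Hence $|\det A_I| \le \prod_{j \in I} h_j \le \prod_{i=1}^k h_i$ because the $h_i$ are decreasing, and summing over the $\binom{d}{k}$ choices of $I$ gives $\lambda(\pi_\cV(\cR)) \le \binom{d}{k}^{1/2} \prod_{i=1}^k h_i$, which is the claim with implied constant depending only on $d$ and $k$.

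\textbf{Main obstacle.} The only genuinely delicate point is justifying that the $k$-volume of a linear image of the unit cube is $\sqrt{\det(AA^{\mathsf T})}$ and then applying Cauchy--Binet cleanly; once that is in place, the decreasing-rearrangement hypothesis $h_1 \ge \cdots \ge h_d$ makes the bound $|\det A_I| \le \prod_{i=1}^k h_i$ immediate. An alternative, entirely elementary route that avoids Cauchy--Binet: choose an orthonormal basis of $\cV$, extend to an orthonormal basis of $\R^d$, and note that $\pi_\cV(\cR)$, as a subset of $\cV$, is contained in the image of $\cR$ under a map that drops $d-k$ coordinates in a rotated frame; bounding each of the $k$ retained coordinates by its oscillation over $\cR$, which is $\le \sum_j h_j |\langle e_j, f\rangle|$ for the relevant unit vector $f$, and then estimating the resulting zonotope volume, recovers the same bound up to constants. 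I would present the Cauchy--Binet version as it is cleanest.
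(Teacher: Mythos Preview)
Your linear-algebra route is genuinely different from the paper's, and modulo one slip it works cleanly.  The slip is in the claimed identity: for a linear map $A:\R^d\to\R^k$ with $d>k$, the $k$-volume of $A([0,1)^d)$ is \emph{not} $\sqrt{\det(AA^{\mathsf T})}$.  Take $d=2$, $k=1$, $A=(1,1)$: the image is $[0,2]$, of length $2$, while $\sqrt{\det(AA^{\mathsf T})}=\sqrt{2}$.  In general $\sqrt{\det(AA^{\mathsf T})}=\bigl(\sum_{|I|=k}(\det A_I)^2\bigr)^{1/2}$ \emph{underestimates} the zonotope volume, so bounding it does not bound $\lambda(\pi_\cV(\cR))$ from above.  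The correct formula for the volume of the zonotope $A([0,1]^d)=\sum_{j=1}^d[0,Ae_j]$ is
\[
\lambda\bigl(A([0,1]^d)\bigr)=\sum_{|I|=k}\lvert\det A_I\rvert,
\]
and with this in hand your argument goes through verbatim: by Hadamard and the ordering of the $h_i$, each $\lvert\det A_I\rvert\le\prod_{j\in I}h_j\le\prod_{i=1}^k h_i$, so $\lambda(\pi_\cV(\cR))\le\binom{d}{k}\prod_{i=1}^k h_i$ (with constant $\binom{d}{k}$ rather than the $\binom{d}{k}^{1/2}$ you wrote).

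For comparison, the paper's proof is entirely elementary and avoids any volume formula.  It observes that $\cR$ is contained in its $k$-dimensional face $\cR_k=[0,h_1)\times\cdots\times[0,h_k)\times\{0\}^{d-k}$ thickened by a ball of radius $O(h_{k+1})$, then covers $\cR_k$ by roughly $\prod_{j=1}^k h_j/h_{k+1}$ balls of radius $O(h_{k+1})$.  Since the projection of a ball is a ball of the same radius regardless of $\cV$, summing the projected volumes gives $O(h_{k+1}^k)\cdot\prod_{j=1}^k h_j/h_{k+1}=O\bigl(\prod_{j=1}^k h_j\bigr)$.  Your (corrected) argument is shorter and yields an explicit constant, at the cost of invoking the zonotope volume formula; the paper's covering argument needs nothing beyond the fact that orthogonal projections do not increase distances.
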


\begin{proof} The idea is to cover a box by balls,  and use that the size of the orthogonal  projections of any given ball  does not  depend on the \
choice of $\cV \in \cG(d, k)$. 

More precisely, without loss of  generality we can assume that 
$$
\cR=[0,h_1)\times \ldots \times  [0, h_d).
$$
Let
$$
\cR_k= [0,h_1)\times \ldots \times  [0, h_k)\times \{0\} \times \ldots \times \{0\}
$$
be a subset of $\cR$. Since for any $\vx\in \cR$ there exists $\vy\in \cR_k$ such that 
$$
 \|\vx-\vy\|\le \left (\sum_{j=k+1}^{d} h_j^{2}\right )^{1/2}\le d h_{k+1},
$$
we obtain 
\begin{equation}
\label{eq:covering1}
\cR \subseteq \cR_k + \cB\({\bf 0}, d h_{k+1}\), 
\end{equation}
where $\cB({\bf 0}, d h_{k+1})$ is the ball of $\R^d$  centred at ${\bf 0}$ and of radius $d h_{k+1}$ and for $\cA, \cB \subseteq \R^d$, as usual, we define: 
$$
\cA+\cB=\{\va+\vb:~\va\in \cA,\ \vb \in \cB\}.
$$

Now we intend to cover $\cR_k$ by a family of balls of $\R^d$ such that each of these balls has the radius roughly $ h_{k+1}$.

For each $1\le j\le k$ we have 
$$
[0, h_j)\subseteq \bigcup_{q=0}^{Q_j} \cI_{j ,q}, 
$$
where $ \cI_{j ,q} =
[qh_{k+1}, (q+1)h_{k+1})
$ and 
\begin{equation}
\label{eq:number}
Q_j=\rf{h_j/h_{k+1}}.
\end{equation}
Then 
$$
\cR_k \subseteq \bigcup_{0\le q_1\le Q_1, \ldots, 0\le q_k \le Q_k} \cI_{1, q_1} \times \ldots \times \cI_{k, q_k}\times \underbrace{\{0\}\times \ldots \times \{0\}}_{d-k~\text{times}}.
$$
Observe that for each choice on integers $q_1, \ldots, q_k$ with 
$$
0\le q_1\le Q_1, \ldots, 0\le q_k \le Q_k,
$$ 
there exists a ball $\cB_{q_1, \ldots, q_k}$ of $\R^d$ of radius $d h_{k+1}$ such that 
$$
\cI_{1, q_1} \times \ldots \times \cI_{k, q_k}\times \{0\}\times \ldots \times \{0\} \subseteq \cB_{q_1, \ldots, q_k}.
$$
Denote the collection of these balls by
$$
\fB=\{\cB_{q_1, \ldots, q_k}:~0\le q_1\le Q_1, \ldots, 0\le q_k \le Q_k\}.
$$
It follows that 
\begin{equation}
\label{eq:covering2}
\cR_k \subseteq \bigcup_{\cB\in \fB} \cB.
\end{equation}
Since the radius of each  ball $\cB\in \fB$  is $d h_{k+1}$, we have  
$$
\cB+\cB({\bf 0}, d h_{k+1})\subseteq 2\cB, 
$$
where $2 \cB(\vx, r)=\cB(\vx, 2r)$. 
Together with~\eqref{eq:covering1} and~\eqref{eq:covering2} we obtain 
$$
\cR\subseteq \cR_k + \cB\({\bf 0}, d h_{k+1}\) \subseteq  \bigcup_{\cB\in \fB} 2 \cB.
$$

It follows that for any $\cV\in \cG(d, k)$ we have 
$$
\pi_{\cV}(\cR)\subseteq \bigcup_{\cB\in \fB}  \pi_{\cV} (2 \cB).
$$
Since for each ball $\cB\in \fB$ the projection $\pi_{\cV} (2 \cB)$ is a ball of the $k$-dimensional  subspace $\cV$ with radius $2d r_{k+1}$, one has
$$
\lambda \left (\pi_{\cV} (2 \cB)  \right ) \ll h_{k+1}^{k}.
$$
Combining this with~\eqref{eq:number}, we obtain  
$$
\lambda (\pi_{\cV}(\cR))\ll h_{k+1}^{k}\,\prod_{j=1}^k Q_j \ll \prod_{i=1}^{k}h_i, 
$$  
which gives the result. 
\end{proof}

\subsection{Orthogonal projections and large values of exponential sums}  

We now   provide a basic tool for the proof of Theorem~\ref{thm:B}.
Applying Lemma~\ref{lem:counting} and Lemma~\ref{lem:geometric}  we obtain the following 
analogue of Corollary~\ref{cor:Markov}.

\begin{cor}
\label{cor:general-p}
Let $0<\alpha<1$.  
For  any $\cV\in \cG(d, k)$ we have 
\begin{align*}
\lambda  (\{ \vx\in \cV:~ \exists\, \vu \in \Tor  \text{ with }   \pi_\cV(\vu) &= \vx  \  \& \  W_{\va, \bphi}( \vu; N)|\ge N^{\alpha}  \}   ) \\
 & \le N^{s(d)-2\alpha s(d)+\widetilde{\sigma}_k(\bphi)+(d-k)(1-\alpha) +o(1)},
\end{align*}
as $N \to \infty$, where $\widetilde{\sigma}_k(\bphi)$ is given by~\eqref{eq:wsigma}.
\end{cor}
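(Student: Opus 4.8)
The plan is to combine the box-counting bound of Lemma~\ref{lem:counting} with the geometric estimate on projections of boxes from Lemma~\ref{lem:geometric}, mimicking the proof of Corollary~\ref{cor:Markov} but replacing the coordinate projection $\pi_{d,k}$ by an arbitrary orthogonal projection $\pi_\cV$. First I would fix a sufficiently small $\eps>0$ and set up the partition of $\Tor$ into $U$ boxes of side lengths $\zeta_1,\ldots,\zeta_d$ as in~\eqref{eq:zetaj}, namely $\zeta_j = 1/\lceil N^{e_j+1+\eps-\alpha}\rceil$, together with the subfamily $\widetilde\fR$ of~\eqref{eq:set R} consisting of those boxes containing a point $\vu$ with $|W_{\va,\bphi}(\vu;N)|\ge N^\alpha$. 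Any $\vx\in\cV$ lying in the set whose measure we want to bound is then the image under $\pi_\cV$ of some point in $\fU = \bigcup_{\cR\in\widetilde\fR}\cR$, so
$$
\lambda\(\{\vx\in\cV:\ \exists\,\vu\in\Tor,\ \pi_\cV(\vu)=\vx,\ |W_{\va,\bphi}(\vu;N)|\ge N^\alpha\}\) \le \lambda(\pi_\cV(\fU)) \le \sum_{\cR\in\widetilde\fR}\lambda(\pi_\cV(\cR)).
$$

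Next I would estimate each term $\lambda(\pi_\cV(\cR))$ using Lemma~\ref{lem:geometric}: a box $\cR\in\widetilde\fR$ has side lengths $\zeta_1,\ldots,\zeta_d$, which after reordering by the degree sequence~\eqref{eq:degree} are comparable to $N^{\alpha-r_i-1-\eps}$ (the largest sides corresponding to the smallest degrees). Lemma~\ref{lem:geometric} gives $\lambda(\pi_\cV(\cR))\ll \prod_{i=1}^k (\text{$k$ largest side lengths}) = \prod_{i=1}^k \zeta_{(i)}$, where $\zeta_{(1)}\ge\cdots\ge\zeta_{(d)}$ is the decreasing rearrangement; since $\zeta_j$ decreases as $e_j$ increases, the $k$ largest sides correspond to the $k$ smallest degrees $r_1,\ldots,r_k$, so this product is $N^{k\alpha - (r_1+\cdots+r_k) - k - k\eps + o(1)}$. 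Crucially this bound is uniform over $\cV\in\cG(d,k)$, with an implied constant depending only on $d$ and $k$. Combining with the cardinality bound $\#\widetilde\fR\le U N^{s(d)(1-2\alpha)+o(1)}$ from Lemma~\ref{lem:counting} and $U = (\prod_{j=1}^d\zeta_j)^{-1} = N^{\sum_j(r_j+1) - d\alpha + o(1)}$, I would multiply out: the total is
$$
U N^{s(d)(1-2\alpha)+o(1)} \cdot N^{k\alpha - (r_1+\cdots+r_k) - k + o(1)} = N^{s(d)(1-2\alpha) + \widetilde\sigma_k(\bphi) + (d-k)(1-\alpha) + o(1)},
$$
using $\sum_{j=1}^d(r_j+1) - d\alpha - (r_1+\cdots+r_k) - k + k\alpha = \widetilde\sigma_k(\bphi) + (d-k) - (d-k)\alpha = \widetilde\sigma_k(\bphi) + (d-k)(1-\alpha)$, where $\widetilde\sigma_k(\bphi) = \sum_{i=k+1}^d r_i$ as in~\eqref{eq:wsigma}. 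Since $\eps$ was arbitrary this yields the claimed exponent $s(d)-2\alpha s(d)+\widetilde\sigma_k(\bphi)+(d-k)(1-\alpha)+o(1)$.

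The main obstacle, and the only real point of care, is bookkeeping the decreasing rearrangement of the side lengths correctly: one must verify that the $k$ \emph{largest} sides of a box in $\widetilde\fR$ are precisely those indexed by the $k$ polynomials of \emph{smallest} degree, which is why $\widetilde\sigma_k(\bphi)$ (defined via the sorted degrees~\eqref{eq:degree}) appears here rather than $\sigma_k(\bphi)$. This is exactly the role of Lemma~\ref{lem:geometric}, whose conclusion $\lambda(\pi_\cV(\cR))\ll\prod_{i=1}^k h_i$ for side lengths $h_1\ge\cdots\ge h_d$ is what makes the argument work uniformly in $\cV$; the rest is the same completion-plus-mean-value packaging already used for Corollary~\ref{cor:Markov}. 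I would also note explicitly that all implied constants depend only on $d$ and $k$ (not on $\cV$), which is what allows the conclusion to hold for \emph{all} $\cV\in\cG(d,k)$ rather than almost all.
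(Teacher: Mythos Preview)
Your proposal is correct and follows essentially the same approach as the paper: both combine the box-counting bound of Lemma~\ref{lem:counting} with Lemma~\ref{lem:geometric} applied to the boxes of $\widetilde\fR$, using that the $k$ largest side lengths correspond to the $k$ smallest degrees $r_1,\ldots,r_k$. The paper organises the arithmetic as $\#\widetilde\fR\prod_{i=1}^k\widetilde\zeta_i \le N^{s(d)-2\alpha s(d)+o(1)}\prod_{j=k+1}^d\widetilde\zeta_j^{-1}$ rather than expanding $U$ explicitly, but this is the same computation.
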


\begin{proof} We fix some sufficiently small   $\eps>0$. 
 We use the same notation as in Section~\ref{subsec:distribution}, including  the choice of $\zeta_j$,  $j=1, \ldots, d$
in~\eqref{eq:zetaj}.
For $\cR\in \mathfrak{R}$ with the side lengths $\zeta_1, \ldots, \zeta_d$ we denote them as 
$$
\widetilde{\zeta}_1\ge \ldots \ge \widetilde{\zeta}_d.
$$ 
 For $j=1, \ldots, d$  by~\eqref{eq:degree} we obtain  
\begin{equation}
\label{eq:order}
\widetilde{ \zeta_j}=1/ \rf{N^{r_j+1+\eps-\alpha}}.
\end{equation}
We also define the set 
$$
\fU = \bigcup_{\cR\in \widetilde \fR} \cR. 
$$
Observe that 
$$
\{ \vx\in \cV:~\exists\, \vu \in \Tor \text{ with } \pi_\cV(\vu) = \vx   \ \& \   |W_{\va, \bphi}( \vu; N)|\ge N^{\alpha}  \} \subseteq \pi_{\cV}\(\fU \).
$$ 
Combining this 
with Lemma~\ref{lem:counting}, Lemma~\ref{lem:geometric} and~\eqref{eq:order} we obtain
\begin{align*}
\lambda\( \pi_{\cV}\(\fU \) \) \le \# \widetilde \fR \prod_{i=1}^{k} \widetilde{\zeta}_i &\le N^{s(d)-2\alpha s(d)+o(1)} \prod_{j=k+1}^{d}\widetilde{\zeta}_j ^{-1}\\
&\qquad \le N^{s(d)-2\alpha s(d)+o(1)} \prod_{j=k+1}^{d} N^{r_j+1+\eps-\alpha}.
\end{align*}
By the definition  of $\widetilde{\sigma}_k(\bphi)$ and since $\eps$ is arbitrary, we obtain the desired bound.
\end{proof}

\section{Proofs of exponential sum bounds}

\subsection{Proof of Theorem~\ref{thm:general}}

We  fix some  $\alpha > 1/2 $  and set
$$
N_i =    2^i, \qquad i =1, 2, \ldots.
$$

We now consider the set 
$$
\cB_{i} = \left\{ \vx\in \T_k:~ \exists\, \vy \in \T_{d-k} \text{ with } |W_{\va, \bphi}( \vx, \vy; N_{i})|\ge N_i^{\alpha}  \right\}. 
$$
By Corollary~\ref{cor:Markov}  we have  
\begin{equation}
\label{eq:lambdaBi}
\lambda\(\cB_i\)   \le N_i^{s(d)-2\alpha s(d)+\sigma_k(\bphi)+(d-k)(1-\alpha) +o(1)}. 
\end{equation} 
We ask that  the parameters satisfy the following condition 
\begin{equation}
\label{eq:condition-general}
s(d)-2\alpha s(d)+\sigma_k(\bphi)+(d-k)(1-\alpha)<0.
\end{equation}
Combining~\eqref{eq:condition-general}  with  the {\it Borel--Cantelli lemma\/}, we obtain that  
$$
\lambda \left(\bigcap_{q=1}^{\infty}\bigcup_{i=q}^{\infty} \cB_i \right)=0.
$$
Since
\begin{align*}
\{ \vx\in \T_k:~\exists\, \vy \in \T_{d-k} & \text{ with } |W_{\va, \bphi}(\vx, \vy; N_i)|\ge N_i^{\alpha}  \\
&\text{ for infinite many } i\in \N \} \subseteq \bigcap_{q=1}^{\infty}\bigcup_{i=q}^{\infty} \cB_i,
\end{align*}
we conclude that for almost all $\vx\in \T_k$ there exists $i_{\vx}$ such that for any $i\ge i_{\vx}$  one has  
\begin{equation}
\label{eq:y}
\sup_{\vy\in \T_{d-k}}|W_{\va, \bphi}(\vx, \vy; N_i)|\le N_i^{\alpha}.
\end{equation}

We fix this $\vx$ in the following arguments. For any $N\ge N_{i_{\vx}}$ there exists $i$ such that 
$$
N_{i-1}\le N< N_{i}.
$$
By Lemma~\ref{lem:control} and~\eqref{eq:y} we have 
\begin{align*}
\sup_{\vy\in \T_{d-k}}|T_{\va, \bphi}(\vx, \vy; N)|  \ll \sup_{\vy\in \T_{d-k}} |W_{\va, \bphi}(\vx, \vy; N_i)| \ll N^{\alpha}.
\end{align*}

Note that the condition~\eqref{eq:condition-general} 
can be written as
$$
\alpha>\frac{s(d)+\sigma_k(\bphi)+d-k}{2s(d)+d-k},
$$
which gives the desired bound.

\subsection{Proof of Theorem~\ref{thm:A}} 

Recall  that  
$$
\Gamma_{YL}(\bphi, k)=\frac{1}{2}+\frac{\sigma_k(\bphi)}{2s(d)}.
$$
Applying a  similar chain of arguments  as the proof of Theorem~\ref{thm:general},  from Corollary~\ref{cor:box-linear}  we derive the following ``self-improving'' property of Weyl sums. 

\begin{lemma}
\label{lem:self}
Let  $\bphi \in \Z[T]^d$ be such that 
$$
\min_{k<j\le d} \deg \varphi_j =1.
$$
Let $\Omega\subseteq \T_k$ with $\lambda(\Omega)>0$ and let $0<t\le 1$.  Suppose that  there exits a positive constant  $C$ such that  for all $\vx\in \Omega$  we have 
$$
\sup_{\vy\in \T_{d-k}} |T_{\va, \bphi}(\vx, \vy; N)| \le C N^{t},  \qquad \forall \, N\in \N.
$$
Then for almost all $\vx\in \Omega$ and for any $\varepsilon>0$ there exists a positive constant $C(\vx, \varepsilon)$ such that 
$$
\sup_{\vy\in \T_{d-k}} |T_{\va, \bphi}(\vx, \vy; N)| \le  C(\vx, \varepsilon)N^{f(t)+\varepsilon},  \qquad \forall \, N\in \N,
$$
where 
$$
f(t)=\frac{s(d)+\sigma_k(\bphi)+(d-k)t}{2s(d)+d-k}.
$$
\end{lemma}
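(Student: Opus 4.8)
The plan is to mimic the proof of Theorem~\ref{thm:general}, but using the refined continuity input of Lemma~\ref{lem:cont-linear} (and its consequence Corollary~\ref{cor:box-linear}) in place of the cruder Lemma~\ref{lem:cont-gen} / Corollary~\ref{cor:Markov}. The point is that the hypothesis $\sup_{\vy}|T_{\va,\bphi}(\vx,\vy;N)|\le CN^{t}$ on all of $\Omega$ feeds into Corollary~\ref{cor:box-linear} and gives a gain in the exponent of the measure estimate: $(d-k)(t-\alpha)$ instead of $(d-k)(1-\alpha)$.

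First I would fix $\alpha>1/2$ to be chosen later, set $N_i=2^i$, and consider
$$
\cB_i=\cB_{\Omega,N_i}=\left\{\vx\in\Omega:~\exists\,\vy\in\T_{d-k}\text{ with }|W_{\va,\bphi}(\vx,\vy;N_i)|\ge N_i^{\alpha}\right\}.
$$
By Corollary~\ref{cor:box-linear} (whose hypotheses are exactly the standing assumptions of the lemma),
$$
\lambda(\cB_i)\le N_i^{\,s(d)-2\alpha s(d)+\sigma_k(\bphi)+(d-k)(t-\alpha)+o(1)}.
$$
I would then impose the condition
$$
s(d)-2\alpha s(d)+\sigma_k(\bphi)+(d-k)(t-\alpha)<0,
$$
which, solving for $\alpha$, is equivalent to
$$
\alpha>\frac{s(d)+\sigma_k(\bphi)+(d-k)t}{2s(d)+d-k}=f(t).
$$
Under this condition $\sum_i\lambda(\cB_i)<\infty$, so by the Borel--Cantelli lemma $\lambda\left(\bigcap_{q\ge1}\bigcup_{i\ge q}\cB_i\right)=0$; hence for almost all $\vx\in\Omega$ there is $i_\vx$ with $\sup_{\vy}|W_{\va,\bphi}(\vx,\vy;N_i)|\le N_i^{\alpha}$ for all $i\ge i_\vx$. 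For general $N$, pick $i$ with $N_{i-1}\le N<N_i$ and apply Lemma~\ref{lem:control} (in the $(\vx,\vy)$ form) to get $\sup_\vy|T_{\va,\bphi}(\vx,\vy;N)|\ll\sup_\vy|W_{\va,\bphi}(\vx,\vy;N_i)|\ll N_i^{\alpha}\ll N^{\alpha}$. Since $\alpha$ can be taken arbitrarily close to $f(t)$ (along a countable sequence of $\alpha$'s decreasing to $f(t)$, intersecting the full-measure sets), and since the bound trivially holds for the finitely many $N<N_{i_\vx}$ after adjusting the constant $C(\vx,\varepsilon)$, we get $\sup_\vy|T_{\va,\bphi}(\vx,\vy;N)|\le C(\vx,\varepsilon)N^{f(t)+\varepsilon}$ for all $N$, as required.

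The main subtlety I would watch is the logical dependence of the input hypothesis on $\Omega$: Corollary~\ref{cor:box-linear} requires the uniform bound $CN^{t}$ to hold for \emph{all} $\vx\in\Omega$ (with a single constant $C$), and this is precisely what is assumed, so there is no circularity — the ``self-improving'' step is really a measure-theoretic upgrade from a deterministic polynomial bound on $\Omega$ to an almost-everywhere-on-$\Omega$ bound with a better exponent. The only other points requiring a little care are the standard passage from the dyadic sequence $N_i$ to arbitrary $N$ (handled by Lemma~\ref{lem:control}, exactly as in the proof of Theorem~\ref{thm:general}) and the countable intersection over $\alpha\downarrow f(t)$ needed to replace ``$>f(t)$'' by ``$f(t)+\varepsilon$ for every $\varepsilon$''. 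None of these is a genuine obstacle; the real content has already been packaged into Corollary~\ref{cor:box-linear}, so this proof is essentially a bookkeeping argument.
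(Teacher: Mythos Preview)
Your proof is correct and follows essentially the same route as the paper's: fix $N_i=2^i$, apply Corollary~\ref{cor:box-linear} to bound $\lambda(\cB_{\Omega,N_i})$, impose $\alpha>f(t)$ for summability, then Borel--Cantelli and Lemma~\ref{lem:control} finish the job exactly as in the proof of Theorem~\ref{thm:general}. One small point: Corollary~\ref{cor:box-linear} carries the hypothesis $0<\alpha<t$, so you should take $f(t)<\alpha<t$ rather than merely $\alpha>1/2$; this is harmless since in the nontrivial range $f(t)<t$, and when $f(t)\ge t$ the conclusion is weaker than the hypothesis and holds trivially.
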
 

\begin{proof}
We  fix some  $0<\alpha <t$  and set
$$
N_i =    2^i, \qquad i =1, 2, \ldots.
$$
For each $i$ denote  
$$
\cB_{\Omega, N_i} = \left\{ \vx\in \Omega:~ \exists\, \vy \in \T_{d-k} \text{ with } |W_{\va, \bphi}( \vx, \vy; N_{i})|\ge N_i^{\alpha}  \right\}.
$$
Corollary~\ref{cor:box-linear} gives 
$$
\lambda(\cB_{\Omega, N_i})\le  N_i^{s(d)-2\alpha s(d)+\sigma_k(\bphi)+(d-k)(t-\alpha) +o(1)}. 
$$   
Similarly to the proof of Theorem~\ref{thm:general}, we ask the parameters satisfy the condition 
$$
s(d)-2\alpha s(d)+\sigma_k(\bphi)+(d-k)(t-\alpha)<0,
$$
which is 
$$
\alpha>\frac{s(d)+\sigma_k(\bphi)+(d-k)t}{2s(d)+d-k}.
$$
Thus we finishes the proof.
\end{proof}

We remark that in Lemma~\ref{lem:self} if $t>\Gamma_{YL}( \bphi, k)$ then $f(t)<t$, this is reason 
why we call it  a ``self-improving'' type result.

We now immediately  derive from Lemma~\ref{lem:self} the following ``self-improving'' property  underlying our bounds. 
Compared to Lemma~\ref{lem:self} it allows us to have  some level of non-uniformity in 
our assumption.   

\begin{cor}\label{cor:self}
Let  $\bphi \in \Z[T]^d$ be such that 
$$
\min_{k<j\le d} \deg \varphi_j =1.
$$
Let $\Omega\subseteq \T_k$ with $\lambda(\Omega)>0$ and let $0<t\le 1$. Suppose that  for almost all $\vx\in \Omega$ there exits a positive constant  $C(\vx)$ such that 
$$
\sup_{\vy\in \T_{d-k}} |T_{\va, \bphi}(\vx, \vy; N)| \le C(\vx) N^{t},  \qquad \forall \, N\in \N.
$$  
Then for almost all $\vx\in \Omega$ and for any $\varepsilon>0$ there exists a positive constant $C(\vx, \varepsilon)$ such that 
$$
\sup_{\vy\in \T_{d-k}} |T_{\va, \bphi}(\vx, \vy; N)| \le  C(\vx, \varepsilon)N^{f(t)+\varepsilon},  \qquad \forall \, N\in \N,
$$
where 
$$
f(t)=\frac{s(d)+\sigma_k(\bphi)+(d-k)t}{2s(d)+d-k}.
$$
\end{cor}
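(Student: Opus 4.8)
The plan is to deduce Corollary~\ref{cor:self} from Lemma~\ref{lem:self} by a standard countable exhaustion of the set $\Omega$ into pieces on which the hypothesis becomes genuinely uniform. The point is that in the hypothesis of the corollary the constant $C(\vx)$ is allowed to depend on $\vx$, whereas Lemma~\ref{lem:self} requires a single constant $C$ valid for all $\vx$ in the relevant set. This kind of non-uniformity is removed by a pigeonhole/Lusin-type decomposition.

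Concretely, I would proceed as follows. Let $\Omega_0\subseteq\Omega$ be the full-measure subset on which the hypothesis holds, i.e. for every $\vx\in\Omega_0$ there is $C(\vx)$ with $\sup_{\vy\in\T_{d-k}}|T_{\va,\bphi}(\vx,\vy;N)|\le C(\vx)N^{t}$ for all $N\in\N$. For each integer $m\ge 1$ set
$$
\Omega_m=\Bigl\{\vx\in\Omega_0:~\sup_{\vy\in\T_{d-k}}|T_{\va,\bphi}(\vx,\vy;N)|\le m\,N^{t}\ \text{ for all }N\in\N\Bigr\}.
$$
Then $\Omega_m$ is increasing in $m$ and $\bigcup_{m\ge 1}\Omega_m=\Omega_0$, so $\lambda\bigl(\bigcup_m\Omega_m\bigr)=\lambda(\Omega)$. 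Fix $m$ with $\lambda(\Omega_m)>0$; then $\Omega_m$ satisfies the hypothesis of Lemma~\ref{lem:self} with the uniform constant $C=m$. Applying Lemma~\ref{lem:self} to $\Omega_m$ we conclude that for almost all $\vx\in\Omega_m$ and every $\varepsilon>0$ there is $C(\vx,\varepsilon)$ with $\sup_{\vy\in\T_{d-k}}|T_{\va,\bphi}(\vx,\vy;N)|\le C(\vx,\varepsilon)N^{f(t)+\varepsilon}$ for all $N$, where $f(t)$ is as in the statement. Taking a countable union over all $m\ge 1$ (discarding the countably many measure-zero exceptional sets, whose union is still null), we obtain the conclusion for almost all $\vx\in\bigcup_m\Omega_m$, hence for almost all $\vx\in\Omega$.

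One subtlety worth addressing explicitly: the quantity $f(t)$ in Lemma~\ref{lem:self} does not depend on the constant $C$ in the hypothesis, only on $t$, $d$, $k$ and $\sigma_k(\bphi)$; this is exactly what makes the decomposition argument work, since the output exponent is the same for every piece $\Omega_m$. (If $f$ had depended on $C$ one could not simply take the union.) Also note that the sets $\Omega_m$ are measurable: for fixed $N$ and fixed $\vy$ the map $\vx\mapsto T_{\va,\bphi}(\vx,\vy;N)$ is continuous, and taking the supremum over $\vy$ in the compact set $\T_{d-k}$ (which may be replaced by a countable dense subset, again by continuity) preserves measurability, so $\Omega_m$ is a countable intersection over $N$ of measurable sets.

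There is essentially no hard step here — this is a routine measure-theoretic bootstrapping. The only thing one must be a little careful about is the bookkeeping of the null exceptional sets: Lemma~\ref{lem:self} is applied once for each $m$, producing a null set $Z_m\subseteq\Omega_m$ outside which the improved bound holds; the union $\bigcup_{m\ge 1}Z_m$ is null, and on $\bigl(\bigcup_m\Omega_m\bigr)\setminus\bigcup_m Z_m$, which still has full measure in $\Omega$, the desired estimate holds with the $\vx$-dependent constant $C(\vx,\varepsilon)$. This completes the proof.
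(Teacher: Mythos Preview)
Your proof is correct and follows exactly the same approach as the paper: decompose $\Omega$ into countably many pieces $\Omega_m$ on which the constant is uniform (the paper takes $C=q$ on $\Omega_q$), apply Lemma~\ref{lem:self} to each piece, and observe that the resulting exponent $f(t)$ is independent of the piece. Your version is in fact more careful than the paper's about measurability of the $\Omega_m$ and the bookkeeping of the exceptional null sets.
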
 

\begin{proof}
We take a decomposition $\Omega=\bigcup_{q=0}^{\infty} \Omega_{q}$ such that $\lambda(\Omega_0)=0$ and for each $q\ge 1$  
the sums are uniformly bounded by $qN^{t}$,   that is, for any $\vx\in \Omega_q$ we have 
$$
\sup_{\vy\in \T_{d-k}} |T_{\va, \bphi}(\vx, \vy; N)| \le q N^{t},  \qquad \forall \, N\in \N.
$$
Applying Lemma~\ref{lem:self} for each $\Omega_q$, $q\ge 1$,  we obtain the desired result.
\end{proof}

Now we turn to the proof of Theorem~\ref{thm:A}. Denote 
$$
f(x)=\frac{s(d)+\sigma_k(\bphi)+(d-k)x}{2s(d)+d-k}.
$$
Firstly Theorem~\ref{thm:general} claims that for almost all $\vx\in \T_k $  and for any $\eps>0$ there exists a constant $C(\vx, \eps)$ such that 
$$
\sup_{\vy\in \T_{d-k}} |T_{\va, \bphi}(\vx, \vy; N)| \le C(\vx) N^{f(1)+\eps},  \qquad \forall \, N\in \N.
$$
Applying Corollary~\ref{cor:self} repeatedly, we obtain the following sequence 
$$
1\rightarrow f(1)\rightarrow f(f(1))\rightarrow f(f(f(1))) \rightarrow\ldots\, .  
$$
Since the function $t \mapsto f(t)$ is strictly monotonically decreasing for
$$
t > \frac{1}{2}+\frac{\sigma_k(\bphi)}{2s(d)} = \Gamma_{YL}(\bphi,k), 
$$
and  
$$
f\(  \Gamma_{YL}(\bphi,k) \) =  \Gamma_{YL}(\bphi,k),
$$
by an arbitrary choice small enough $\eps>0$ at each steps, we finish the proof.

\subsection{Proof of Theorem~\ref{thm:weyl short}} 
For $M\in \Z$, recall that  Weyl sums over short intervals  are defined as follows 
$$
S_d(\vu; M,N) = \sum_{n=M+1}^{M+N} \e(u_1n+\ldots+u_d n^d). 
$$
 We write 
$$
S_d(\vu; M,N)=\sum_{n=1}^N \e(u_1(n+M)+\ldots+u_d(n+M)^d),
$$
and observe that in the  polynomial identity  
\begin{equation}
\label{eq:shift}
\begin{split}
u_1(T+M)+\ldots & +u_d (T+M)^d \\
& = v_0+v_1T+\ldots+v_{d-1}T^{d-1}+u_d T^d \in \R[T], 
\end{split}
\end{equation}
where for $j=0, 1, \ldots, d-1$, each $v_j$, depends only on $u_1, \ldots, u_{d}$ and $M$. 

Hence Theorem~\ref{thm:A}, applied $k=1$, $\varphi_1(T)=T^d$  
and thus with $\sigma_{1} (\bphi) = d(d-1)/2$,  
yields the desired estimate on $S_d(\vu; M,N)$.

\subsection{Proof of Theorem~\ref{thm:B}}
As we have claimed, Theorem~\ref{thm:B} now follows by 
applying  Corollary~\ref{cor:general-p}  instead of Corollary~\ref{cor:Markov}  and using similar arguments as in the proofs of 
Theorem~\ref{thm:general}. We omit these very similar arguments here.

\section{Proof of discrepancy bounds}

\subsection{Preliminaries}

We start with recalling  the classical {\it Erd\H{o}s--Tur\'{a}n inequality\/} (see, for instance,~\cite[Theorem~1.21]{DrTi}).

\begin{lemma}
\label{lem:ET}
Let $\xi_n$, $n\in \N$,  be a sequence in $[0,1)$. Then for the discrepancy $D_N$ given by~\eqref{eq:Discr} and any $G\in \N$, we have  
$$
D_N \le 3 \left( \frac{N}{G+1} + \sum_{g=1}^{G}\frac{1}{g} \left| \sum_{n=1}^{N} \e(g \xi_n) \right | \right).
$$
\end{lemma}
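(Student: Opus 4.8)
\textbf{Proof proposal for the Erd\H{o}s--Tur\'{a}n inequality (Lemma~\ref{lem:ET}).}

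The plan is to follow the classical argument via approximation of the indicator of an interval by trigonometric polynomials (the Beurling--Selberg majorant/minorant), which is the standard route and is precisely the version recorded in~\cite[Theorem~1.21]{DrTi}. First I would fix an interval $(a,b)\subseteq [0,1)$ and write the counting discrepancy as
$$
\#\{1\le n\le N:~\xi_n\in (a,b)\} - (b-a)N = \sum_{n=1}^N \chi_{(a,b)}(\xi_n) - (b-a)N,
$$
where $\chi_{(a,b)}$ is extended $1$-periodically. The key input is the existence, for each $G\in\N$, of trigonometric polynomials $V^\pm(x) = \sum_{|g|\le G} c^\pm_g \e(gx)$ of degree at most $G$ with $V^-(x)\le \chi_{(a,b)}(x)\le V^+(x)$ for all $x$, with $c^\pm_0 = (b-a) \pm 1/(G+1)$, and with $|c^\pm_g|\le 1/g + 1/(G+1)$ (more crudely $\ll 1/g$ up to the zeroth term) for $1\le |g|\le G$. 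I would cite this construction (Vaaler / Beurling--Selberg, as in~\cite{DrTi} or~\cite{Montgomery}) rather than reproduce it.

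Next I would insert the majorant:
$$
\sum_{n=1}^N \chi_{(a,b)}(\xi_n) \le \sum_{n=1}^N V^+(\xi_n) = \sum_{|g|\le G} c^+_g \sum_{n=1}^N \e(g\xi_n) = c^+_0 N + \sum_{1\le |g|\le G} c^+_g \sum_{n=1}^N \e(g\xi_n).
$$
Subtracting $(b-a)N$ and using $c^+_0 N - (b-a)N = N/(G+1)$ gives an upper bound of the form $N/(G+1) + \sum_{1\le|g|\le G}|c^+_g|\,|\sum_n \e(g\xi_n)|$. Pairing $g$ with $-g$, noting $|\sum_n \e(-g\xi_n)| = |\sum_n \e(g\xi_n)|$, and using $|c^+_g|\le 1/|g| + 1/(G+1)$ together with $\sum_{1\le g\le G} 1/(G+1)\le 1$, this collapses to something bounded by $2\bigl(N/(G+1) + \sum_{g=1}^G \frac1g |\sum_n \e(g\xi_n)|\bigr)$, with constants that can be absorbed into the $3$ in the statement. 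The analogous lower bound comes from $V^-$ in the same way, and combining the two one-sided estimates yields the bound on $D_N = \sup_{a<b}|\cdots|$, since the coefficient bounds on $V^\pm$ are uniform in $a,b$.

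The only genuinely delicate point — the ``main obstacle'' — is the construction of the one-sided trigonometric approximants $V^\pm$ with the sharp control $|c^\pm_g|\le 1/|g| + 1/(G+1)$ on the Fourier coefficients and the exact value $c^\pm_0 = (b-a)\pm 1/(G+1)$; without this sharpness one loses the clean shape of the bound. Since this is entirely classical and is exactly what~\cite[Theorem~1.21]{DrTi} provides, I would simply quote it: the lemma as stated is literally~\cite[Theorem~1.21]{DrTi}, so the ``proof'' reduces to a citation, and the paragraphs above merely indicate how the inequality is obtained should a self-contained argument be wanted.
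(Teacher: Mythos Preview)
Your proposal is correct and matches the paper's treatment exactly: the paper does not prove this lemma but simply cites it as the classical Erd\H{o}s--Tur\'{a}n inequality~\cite[Theorem~1.21]{DrTi}. Your sketch of the Beurling--Selberg/Vaaler argument is the standard route behind that citation and is accurate.
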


We also use the following trivial property of the  Lebesgue measure, see~\cite[Section~3]{Wool3}
for a short proof. 

\begin{lemma}
\label{lem:invairant}
Let $\cA\subseteq \Tor$ and $g\in \N$, then 
$$
\lambda  (\{\vx\in \Tor:~(g\vx \pmod 1) 
 \in \cA\} )=\lambda(\cA).
$$
\end{lemma}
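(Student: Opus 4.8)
The plan is to exploit that the self-map $\phi_g\colon \Tor \to \Tor$, $\vx \mapsto g\vx \pmod 1$, is a $g^d$-to-one affine covering of the torus, so that the preimage $\phi_g^{-1}(\cA)$ of any measurable set $\cA$ splits into $g^d$ scaled translates of $\cA$, each of Lebesgue measure $g^{-d}\lambda(\cA)$. Summing these $g^d$ contributions returns $\lambda(\cA)$, which is exactly the quantity on the left-hand side of the asserted identity, since that set is precisely $\phi_g^{-1}(\cA)$.

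Concretely, I would identify $\Tor$ with $[0,1)^d$ and partition it into the $g^d$ half-open subcubes
\[
\cQ_{\vk} = \prod_{j=1}^{d} \left[ \tfrac{k_j}{g}, \tfrac{k_j+1}{g} \right), \qquad \vk = (k_1, \ldots, k_d) \in \{0, 1, \ldots, g-1\}^d .
\]
For $\vx \in \cQ_{\vk}$ one has $k_j \le g x_j < k_j + 1$ for each $j$, so on $\cQ_{\vk}$ the reduction $g\vx \pmod 1$ coincides with the affine bijection $\vx \mapsto g\vx - \vk$ of $\cQ_{\vk}$ onto $[0,1)^d$, whose inverse is $\vy \mapsto (\vy + \vk)/g$. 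Hence, for any measurable $\cA \subseteq \Tor$,
\[
\{\vx \in \Tor :~ (g\vx \pmod 1) \in \cA\} \;=\; \bigcup_{\vk \in \{0, \ldots, g-1\}^d} \tfrac{1}{g}\bigl(\cA + \vk\bigr),
\]
a union that is disjoint because the $\vk$-th term lies inside $\cQ_{\vk}$.

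It then remains only to add up the measures. By translation invariance of Lebesgue measure in $\R^d$ together with the scaling law $\lambda(c\,\cS) = c^d \lambda(\cS)$, each term satisfies $\lambda\bigl(\tfrac1g(\cA+\vk)\bigr) = g^{-d}\lambda(\cA+\vk) = g^{-d}\lambda(\cA)$, so that
\[
\lambda\bigl(\{\vx \in \Tor :~ (g\vx \pmod 1) \in \cA\}\bigr) \;=\; g^{d}\cdot g^{-d}\,\lambda(\cA) \;=\; \lambda(\cA),
\]
which is the claim.

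The only delicate point --- and the closest thing here to an obstacle --- is the torus bookkeeping: one must verify that the pieces $\tfrac1g(\cA+\vk)$ genuinely tile $\phi_g^{-1}(\cA)$ without overlap, which is where the choice of \emph{half-open} cubes $\cQ_{\vk}$ (matching the convention $\Tor = [0,1)^d$) enters. A slicker alternative avoiding the explicit tiling would be to note that $\phi_g$ is a surjective continuous endomorphism of the compact abelian group $\Tor$, so the pushforward $(\phi_g)_*\lambda$ is a translation-invariant Borel probability measure on $\Tor$ and hence equals the normalised Haar measure $\lambda$ by its uniqueness; since $(\phi_g)_*\lambda(\cA) = \lambda(\phi_g^{-1}(\cA))$ is exactly the left-hand side of the lemma, this again yields the identity. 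I expect to keep the elementary tiling argument in the text.
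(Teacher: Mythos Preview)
Your argument is correct: the tiling of $\Tor=[0,1)^d$ by the half-open subcubes $\cQ_{\vk}$, together with translation invariance and the scaling law for Lebesgue measure, gives the identity cleanly, and your alternative via uniqueness of Haar measure is equally valid.

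As for comparison, the paper does not actually supply a proof of this lemma; it merely records the statement and refers the reader to~\cite[Section~3]{Wool3} for a short argument. So there is nothing to compare against in the present paper, and your self-contained proof is a genuine addition rather than a reworking. One minor remark: the lemma as stated says ``let $\cA\subseteq\Tor$'' without the word \emph{measurable}, whereas you (reasonably) insert it; if you want the identity literally for all subsets you can run the same tiling argument with Lebesgue outer measure, since outer measure is also translation invariant, scales as $c^d$, and is countably subadditive (with equality here because the pieces sit in disjoint $\cQ_{\vk}$ and outer measure restricted to a cube is a metric outer measure). Either way the conclusion stands.
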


\subsection{Proof of Theorem~\ref{thm:general-D}} As in Section~\ref{subsec:distribution}, if $\va=\ve=(1)_{n=1}^{\infty}$,  we just write  
$$
W_{\bphi}( \vx, \vy; N)=W_{\ve,\bphi}( \vx, \vy; N).
$$

Let $N_i=2^i$, $i\in \N$ and  let 
$G_i=\fl{N_i^{\eta}}$ 
for some $\eta>0$ to  be chosen later.  For each $g=1, \ldots, G_i$ let 
$$
\cB_{i, g}= \left\{ \vx\in \T_k:~ \exists\, \vy \in \T_{d-k} \text{ with } |W_{\bphi}( g\vx, g\vy; N_{i})|\ge N_i^{\alpha}  \right\},
$$ 
and 
$$
\widetilde{\cB_i}=\bigcup_{g=1}^{G_i} \cB_{i, g}.
$$
Observe that 
$$
\cB_{i, g}=\{\vx\in \T_k:~(g\vx \pmod 1) \in \cB_i\}, 
$$ 
where the notation $\cB_i$ is  given by~\eqref{eq:Bi} in  the case $\va=\ve$. 
 By Lemma~\ref{lem:invairant} and the inequality~\eqref{eq:lambdaBi} we conclude that 
$$
\lambda(\widetilde{\cB_i})\le G_i N_i^{s(d)-2\alpha s(d)+\sigma_k(\bphi)+(d-k)(1-\alpha)+o(1)}. 
$$
We ask that  the fixed $\alpha$ and $\eta$ satisfy the following condition 
\begin{equation}
\label{eq:con1}
\eta+s(d)-2\alpha s(d)+\sigma_k(\bphi)+(d-k)(1-\alpha)<0.
\end{equation}
Combining this with  the Borel--Cantelli lemma, and choosing a small enough $\eps$, we obtain that 
$$
\lambda \left(\bigcap_{q=1}^{\infty}\bigcup_{i=q}^{\infty} \widetilde{\cB_i}\right)=0.
$$
It follows that  for almost all $\vx\in \T_k$ there exists $i_{\vx}$ such that for any $i\ge i_{\vx}$  and 
any $g=1, \ldots,  G_i$, one has 
$$
\sup_{\vy\in \T_{d-k}}|W_{ \bphi}( g\vx, g\vy; N_i)|\le N_i^{\alpha}.
$$
Combining with Lemma~\ref{lem:control} we obtain that  for any $N\ge N_{i_{\vx}}$ there exists $i\in \N$ such that 
$$
N_{i-1}\le N < N_i,
$$
and  for any $g=1, \ldots,  G_i$, one has 
$$
\sup_{\vy\in \T_{d-k}}|T_{ \bphi}( g\vx, g\vy; N)|\ll \sup_{\vy\in \T_{d-k}}|W_{\bphi}( g\vx, g\vy; N_i)| \ll N^{\alpha}.
$$
Applying  Lemma~\ref{lem:ET}  for  $N$ and $G=G_i$ we conclude that  
$$
\sup_{\vy\in \T_{d-k}} D_{\bphi}(\vx, \vy; N)\ll N /G_i + N^{\alpha} \log G_i
\ll N^{1-\eta} + N^{\alpha} \log  N. 
$$
Let  $\eta=1-\alpha$. The condition~\eqref{eq:con1} can be written as
$$
\alpha>\frac{\sigma_k(\bphi)+s(d)+d-k+1}{2s(d)+d-k+1},
$$
which finishes  the proof.

\subsection{Proof of Theorem~\ref{thm:discrep short}}
Recall that  $D_d(\vu;M, N)$  is  the discrepancy  of the sequence  of fractional parts
$$
\{u_1n+\ldots+u_d n^d\}, \qquad  n=M+1, \ldots, M+N.
$$
Clearly this sequence is same as  
$$
\{u_1(n+M)+\ldots+u_d (n+M)^d\}, \qquad  n=1, \ldots, N, 
$$
and thus as  before, see~\eqref{eq:shift}, we see that this sequence is the same as   
$$
\{v_0+v_1n+\ldots+v_{d-1}n^{d-1}+u_d n^d\}, \qquad  n=1, \ldots, N, 
$$
where for $j=0, 1, \ldots, d-1$, each $v_j$, depends only on $u_1, \ldots, u_{d}$ and $M$. 
Furthermore let $\vu^*=(v_1, \ldots, v_{d-1}, u_d)$   and 
$$
D_d(\vu; N)=D_d(\vu; 0, N).
$$
Then we have 
$$
D_d(\vu^*; N)\ll D_d(\vu; M, N)\ll D_d(\vu^*; N),
$$
where the implied constant is  absolute. 
This can be showing by combining the above arguments and the following ``translation invariance" of the discrepancy. More precisely, let $\xi$ be a constant and $\xi_n$ be a sequence of real number.  Let  $D_{\xi,N}$ be the discrepancy of the fractional parts
$$
\{\xi+\xi_n\}, \qquad  n=1, \ldots, N.
$$ 
Thus $D_N = D_{0,N}$.
From the definition of discrepancy~\eqref{eq:Discr} we derive 
$$
D_N\ll D_{\xi,N}\ll D_N.
$$

Easy calculations, show that Theorem~\ref{thm:general-D} with $k=1$, $\varphi_1(T)=T^d$  
and thus with $\sigma_{1} (\bphi) = d(d-1)/2$, implies the result.

\section{Comments}
\label{sec:comments}

\subsection{Discrepancy of polynomials}  
First of all we give a proof for~\eqref{eq:Du}, that is, that  for almost all $\vu\in \Tor$ one has 
$$
D_d(\vu; N)\le N^{1/2} (\log N)^{3/2 + o(1)}, \qquad N \to  \infty.
$$

This is based on~\cite[Theorem~5.13]{Harm} (see Proposition~\ref{prop:Harm} below) and  the following 
general statement which is perhaps well-known but the authors have not been able to find it in the literature. 

\begin{prop} 
\label{pro:pro}
Let $\cU \subseteq \Tor$, be  a set of positive Lebesgue measure $\lambda(\cU) > 0$. Then
 there is a vector $\vv_0 \in \Tor$ and a set  real numbers $\cW  \subseteq [0, \sqrt{d}]$  of positive Lebesgue measure
  $\lambda(\cW) > 0$,  such that for every $w \in \cW$ we have 
$ w \vv_0 \in \cU$. 
\end{prop}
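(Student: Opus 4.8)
The plan is to express the condition ``$w\vv_0 \in \cU$ for many $w$'' in terms of a radial decomposition of $\Tor$, identified with $[0,1)^d$, and to argue by Fubini that some radial line must meet $\cU$ in a set of positive one-dimensional measure. First I would recall that every point $\vu \in [0,1)^d$ with $\vu \neq \0$ can be written uniquely as $\vu = w \vv$ where $\vv$ lies on the ``outer shell''
$$
\Sigma = \{\vv \in [0,1)^d : \|\vv\|_\infty = 1 \text{ in the limiting sense, i.e. } \max_j v_j \text{ attains the boundary}\},
$$
and $w \in (0, 1]$; more precisely, I would parametrise the half-open cube by $\vu \mapsto (\vv, w)$ where $\vv = \vu / \max_j u_j$ (so $\vv$ ranges over the part of the boundary of the cube facing away from the origin) and $w = \max_j u_j \in [0,1)$. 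Since $\|\vv\| \le \sqrt d$ for $\vv$ in the cube, for each fixed direction $\vv$ the ray $\{w\vv : 0 \le w\}$ meets the cube for $w$ in an interval contained in $[0, \sqrt d]$, which is why the target set $\cW$ can be taken inside $[0,\sqrt d]$.

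The key computation is the change of variables: writing $\lambda$ for Lebesgue measure on $[0,1)^d$, one has, for any measurable $\cU$,
$$
\lambda(\cU) = \int_{\Sigma} \left( \int_{\{w : w\vv \in [0,1)^d\}} \mathbf 1_{\cU}(w\vv)\, \rho(\vv) \, w^{d-1}\, dw \right) d\sigma(\vv),
$$
where $d\sigma$ is the natural surface measure on the relevant piece of $\partial([0,1]^d)$ and $\rho$ is a Jacobian factor that is bounded and bounded away from $0$ on $\Sigma$ (all of this is just polar-type coordinates adapted to the cube; one can also avoid surface measure entirely by integrating over the face $\{v_{j_0} = 1\}$ for each coordinate $j_0$ separately and summing). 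Since $\lambda(\cU) > 0$, the integrand cannot vanish for $\sigma$-almost every $\vv$, so there is a direction $\vv_0$ (which after rescaling we may assume lies in $\Tor$) for which
$$
\int_{\{w \in [0,\sqrt d]\, :\, w\vv_0 \in [0,1)^d\}} \mathbf 1_{\cU}(w\vv_0)\, w^{d-1}\, dw > 0.
$$
Because the weight $w^{d-1}$ is locally bounded, this forces the set $\cW = \{w \in [0,\sqrt d] : w \vv_0 \in \cU\}$ to have positive one-dimensional Lebesgue measure, which is exactly the assertion.

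The main obstacle — really the only delicate point — is setting up the coordinate system cleanly on the \emph{half-open} cube $[0,1)^d$ so that the map $(\vv, w) \mapsto w\vv$ is a bijection onto $[0,1)^d \setminus \{\0\}$ with a well-behaved (bounded, nonvanishing a.e.) Jacobian, and checking measurability of all the slices so that Fubini applies. I would handle this by the face-by-face decomposition mentioned above: for each $j_0 \in \{1,\dots,d\}$ let $F_{j_0} = \{\vv \in [0,1]^d : v_{j_0} = 1,\ v_j \le 1\ \forall j\}$, note that $[0,1)^d\setminus\{\0\}$ is covered (up to a null set) by $\{w\vv : \vv \in F_{j_0},\ 0 \le w < 1\}$ over $j_0$, apply the standard smooth change of variables on each piece where the map is a diffeomorphism, and conclude as above. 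The Jacobian on each face is $w^{d-1}$ times a factor depending only on $\vv$ that is continuous and positive, so no analytic subtlety arises. Once a good $\vv_0$ is found, rescaling it into $\Tor$ (it already lies in $[0,1]^d$) and noting $\cW \subseteq [0,\sqrt d]$ completes the proof.
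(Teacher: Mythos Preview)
Your proposal is correct and follows essentially the same approach as the paper: both arguments write $\lambda(\cU)$ via a radial (``polar'') change of variables and then apply Fubini to find a direction along which the indicator of $\cU$ has positive one-dimensional integral. The only difference is cosmetic: the paper parametrises directions by the unit sphere $\mathbb{S}^{d-1}$ and invokes the standard polar-coordinates formula (citing Evans--Gariepy), obtaining
\[
\lambda(\cU)=\int_{\mathbb{S}^{d-1}}\Bigl(\int_{0}^{\sqrt{d}}\chi_{\cU}(r\vv)\,r^{d-1}\,dr\Bigr)d\mathcal{H}^{d-1}(\vv),
\]
whereas you parametrise directions by the outer faces of the cube and compute the Jacobian face-by-face. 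Your version is slightly more self-contained (no appeal to the coarea formula or Hausdorff measure) and makes the final rescaling of $\vv_0$ into $[0,1)^d$ with $\cW\subseteq[0,\sqrt d]$ a touch more transparent; the paper's version is shorter because it outsources the change of variables to a reference. Either way the substance is identical.
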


\begin{proof}
Let $\chi_{\cU}$ be the characteristic function of $\cU$, then clearly we have 
\begin{equation}
\label{eq:measureU}
\lambda(\cU) = \int_{\Tor} \chi_{\cU}(\vu)\, d\vu.
\end{equation}
Applying the polar coordinates~\cite[Theorem~3.12]{EG} to the function $\chi_{\cU}$,  we obtain 
\begin{equation}
\label{eq:uu}
\int_{\Tor} \chi_{\cU}(\vu)\, d\vu=\int_{0}^{\sqrt{d}} \left(\int_{\{\vu: \|\vu\|=r\}} \chi_{\cU}(\vu) d \mathcal{H}^{d-1}(\vu)\right)dr, 
\end{equation}
where $\mathcal{H}^{d-1}$ is the $(d-1)$-dimensional Hausdorff measure which is given by~\cite[Chapter~2]{EG}.  
By taking $\vu=r\vv$ for some $\vv \in \mathbb{S}^{d-1}$ in the second term of~\eqref{eq:uu}  we obtain 
$$
\int_{\{\vu: \|\vu\|=r\}} \chi_{\cU}(\vu) d \mathcal{H}^{d-1}(\vu)=\int_{\mathbb{S}^{d-1}} \chi_{\cU}(r\vv) r^{d-1}d \mathcal{H}^{d-1}(\vv), 
$$
where $\mathbb{S}^{d-1}\subseteq \R^d$ is the unit sphere centred at the origin.  
Combining this with~\eqref{eq:measureU} and~\eqref{eq:uu}  and applying  {\it Fubini's theorem\/},  we arrive to  
$$
\lambda(\cU) = \int_{\mathbb{S}^{d-1}} \left(\int_{0}^{\sqrt{d}} \chi_{\cU}(r\vv) r^{d-1} dr \right) d \mathcal{H}^{d-1}(\vv).
$$
Since $\lambda(\cU)>0$, we conclude that there exist a vector 
$$
\vv=(v_1, \ldots, v_d)\in \mathbb{S}^{d-1}
$$  
and a set   of $r \in [0,\sqrt{d}]$ of positive Lebesgue measure such that $r\vv\in \cU$, which gives the desired result. 
\end{proof}

We formulate~\cite[Theorem~5.13]{Harm}, see also~\cite{Baker}, in the following form.

\begin{prop} 
\label{prop:Harm}
Let $\cA = \(a_n\)_{n=1}^\infty$ be a sequence increasing sequence of real numbers such that $a_{n+1}-a_n\ge \delta>0$ and let $\eps>0$. Then for almost all $w\in \R$ we have 
$$
D\(w\cA; N\)\ll   N^{1/2} (\log N)^{3/2 + \varepsilon},
$$
where $D\(w\cA; N\)$ means the discrepancy of the sequence $w a_n \pmod 1 $, $n =1, \ldots, N$. 
\end{prop}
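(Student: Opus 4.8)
The plan is to reduce the claim, via the Erd\H{o}s--Tur\'{a}n inequality (Lemma~\ref{lem:ET}), to mean-square estimates for the exponential sums
$$
S_{g,N}(w)=\sum_{n=1}^{N}\e(g w a_n),\qquad g,N\ge 1,
$$
and then to pass from such $L^2$ control to an almost-everywhere pointwise bound by a Borel--Cantelli argument along a dyadic subsequence, completed by a maximal inequality to cover the intermediate values of $N$. Since a countable union of null sets is null, it suffices to prove the bound for almost all $w$ in each interval $I=[T,T+1]$, $T\in\Z$, so I fix such an $I$; note that $\e(g\{wa_n\})=\e(g w a_n)$. Applying Lemma~\ref{lem:ET} with $G=\rf{N^{1/2}}$ gives
$$
D(w\cA;N)\ll N^{1/2}+F_N(w),\qquad F_N(w):=\sum_{g=1}^{G}\frac1g\,\bigl|S_{g,N}(w)\bigr|,
$$
so everything comes down to showing $F_N(w)\ll_{w,\eps}N^{1/2}(\log N)^{3/2+\eps}$ for almost all $w\in I$.

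The main input is a mean-square estimate exploiting the separation hypothesis. Expanding the square and using $|a_m-a_n|\ge\delta|m-n|$ together with $\bigl|\int_I\e(g w c)\,dw\bigr|\le(\pi g|c|)^{-1}$ to bound the off-diagonal terms, one gets
$$
\int_I\bigl|S_{g,N}(w)\bigr|^2\,dw\ll_\delta N\Bigl(1+\frac{\log N}{g}\Bigr),\qquad g\ge 1,
$$
and hence, by the Cauchy--Schwarz inequality in the variable $g$ (which contributes a factor $\sum_{g\le G}g^{-1}\ll\log N$), $\int_I F_N(w)^2\,dw\ll_\delta N(\log N)^2$. Taking $N_j=2^j$, Chebyshev's inequality shows that the set of $w\in I$ with $F_{N_j}(w)>N_j^{1/2}(\log N_j)^{3/2+\eps}$ has measure $\ll(\log N_j)^{-1-2\eps}=O(j^{-1-2\eps})$, which is summable; by the Borel--Cantelli lemma, for almost all $w\in I$ one has $F_{N_j}(w)\ll_{w,\eps}N_j^{1/2}(\log N_j)^{3/2+\eps}$ for all large $j$.

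It then remains to interpolate to all $N$. For $N_j\le N<N_{j+1}$ one has $\bigl|S_{g,N}(w)-S_{g,N_j}(w)\bigr|=\bigl|\sum_{N_j<n\le N}\e(g w a_n)\bigr|$, so one must control
$$
\sup_{N_j\le N<N_{j+1}}\ \sum_{g=1}^{G}\frac1g\,\Bigl|\sum_{N_j<n\le N}\e(g w a_n)\Bigr|
$$
for almost all $w$; an $L^2$ bound cannot be union-summed over the roughly $N$ values of $N$ in a block, so a maximal inequality is needed. Here the separation hypothesis enters a second time: the system $\{\e(g w a_n)\}_n$ is quasi-orthogonal on $I$, with Gram matrix off-diagonal entries controlled by $|a_m-a_n|^{-1}\le(\delta|m-n|)^{-1}$, so a Rademacher--Menshov (or G\'{a}l--Koksma) type maximal inequality applies to each dyadic block. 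Combining this with a Borel--Cantelli argument on the blocks and with the subsequence bound above yields the assertion; this is precisely the content of \cite[Theorem~5.13]{Harm} (see also~\cite{Baker}).

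The hard part will be the bookkeeping of logarithmic factors in this last step. A naive application of the Rademacher--Menshov inequality loses an extra power of $\log N$ beyond what is afforded, so one must run the maximal estimate with the smallest possible logarithmic loss (or invoke a Carleson-type maximal bound for exponential sums with $\delta$-separated frequencies) and balance it against the Erd\H{o}s--Tur\'{a}n summation over $g$ so that the combined exponent of $\log N$ comes out to exactly $3/2+\eps$. The mean-square computation and the dyadic Borel--Cantelli step are routine; the subtlety lies entirely in making the interpolation step efficient.
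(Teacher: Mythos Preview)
The paper does not prove this proposition: it is simply quoted as~\cite[Theorem~5.13]{Harm} (with a parallel reference to~\cite{Baker}) and used as a black box in deriving~\eqref{eq:Du}. There is therefore no proof in the paper to compare your argument against.

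That said, your sketch is the standard route to this kind of metric discrepancy bound and is, in outline, the argument behind the cited result: Erd\H{o}s--Tur\'{a}n reduces the discrepancy to weighted exponential sums, the separation hypothesis $a_{n+1}-a_n\ge\delta$ yields the $L^2$ estimate $\int_I|S_{g,N}|^2\ll_\delta N(1+g^{-1}\log N)$ on each unit interval, and a maximal inequality of G\'{a}l--Koksma type converts the mean-square bound into an almost-everywhere pointwise one. Your $L^2$ computation and the Borel--Cantelli step along $N_j=2^j$ are correct, and you have correctly located the one genuinely delicate point: a crude blockwise Rademacher--Menshov bound costs an extra power of $\log N$ and overshoots $3/2+\eps$. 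In the cited reference this is handled by applying the G\'{a}l--Koksma machinery directly to the full partial-sum problem rather than block by block, which is precisely what pins the logarithmic exponent at $3/2+\eps$. So the ``hard part'' you flag is exactly what the citation to~\cite{Harm} is there to absorb; within the present paper nothing further is required.
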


Let us now fix some   $\varepsilon >0$ and denote by 
$\cU \subseteq \Tor$  the set of $\vu \in  \Tor$, for which 
\begin{equation} 
\label{eq:Du eps}
D_d(\vu; N)\ge  N^{1/2} (\log N)^{3/2 + \varepsilon},
\end{equation}
for infinitely many $N \in \N$.  Assume that $\lambda (\cU)> 0$. By  Proposition~\ref{pro:pro}  there exists a vector 
$$
\vv=(v_1, \ldots, v_d)\in \Tor
$$ 
and a set   of $w \in [0, \sqrt{d}]$ of positive Lebesgue measure such that we have~\eqref{eq:Du eps} with $\vu = w \vv $  and for infinitely many $N \in \N$.  On the other hand applying Proposition~\ref{prop:Harm} to the sequence $v_1n+\ldots +v_d n^{d}$, $n\in \N$ and parameter $\varepsilon/2$, we obtain that for almost all $w\in \R$ one has 
$$
D_d(w\vv; N)\ll N^{1/2} (\log N)^{3/2 + \varepsilon/2}.
$$  
This now gives the contradiction and therefore together with the arbitrary choice of $\varepsilon$ the estimate~\eqref{eq:Du}  holds. 

At the moment we are not able to rule out that for almost all $\vu\in \Tor$ one has 
$$
D_d(\vu; N)\le N^{o(1)}, \qquad N \to  \infty,
$$
for any $d\ge 3$, which we believe to be false. In fact, as we have mentioned,  we believe that~\eqref{eq:Du}
is tight except the logarithm factor, and this is true for the case $d=2$ which follows from a result of 
Fedotov and Klopp~\cite[Theorem~0.1]{FK}  and the  Koksma inequality~\cite[Theorem~5.4]{Harm}, 
while the conjecture is still open when $d\ge 3$.

For the monomial sequence $xn^d$, $n\in \N$ we denote by $D_d(x; N)$ the corresponding discrepancy of its fractional parts. We note that in the case $d=1$ the celebrated result of  Khintchine, see~\cite[Theorem~1.72]{DrTi}, implies that  for almost all $x\in [0,1)$ one has 
$$
D_1(x; N)\le N^{o(1)}, \qquad N \to  \infty.
$$

Finally, we remark that for $d\ge 2$  Aistleitner and Larcher~\cite[Corollary~1]{AL} have recently shown that  for almost all $x\in [0,1)$  one has 
$$
D_d(x; N)\ge N^{1/2-\eps}
$$
 for any $\eps>0$ and for infinitely many $N\in \N$. Many other metrical results on the discrepancy of polynomials and other sequences
 can be found in~\cite{AL,Bil,Harm}. 
 
\subsection{The structure of the exceptional sets}  
For $1\le k<d$ and $0<\alpha<1$ denote 
\begin{align*}
\cE_{\bphi, k, \alpha} =\{\vx\in \T_k:~\sup_{\vy\in \T_{d-k}} |T_{\bphi}(\vx, \vy;& N)|\ge N^{\alpha} \\
&\text{ for infinitely many } N\in \N\}.
\end{align*}
Theorem~\ref{thm:general} claims that for any 
$$
\alpha >\frac{1}{2}+\frac{2\sigma_k(\bphi)+d-k}{2d^2+4d-2k},
$$
the set 
$
\cE_{\bphi, k, \alpha}
$
is of zero $k$-dimensional Lebesgue measure.   
It is natural to ask what we can  say more for these  sets with zero Lebesgue measure. 

Motivated from the works~\cite{ChSh, ChSh2} we ask  the size of $\cE_{\bphi, k, \alpha}$ in the sense of Baire categories and Hausdorff dimension. In the following suppose that $\bphi$ is the classical choice as in~\eqref{eq:classical}. 
The argument in~\cite{ChSh}  implies that for any $1\le k<d$ and any $0<\alpha<1$ the set
$$
\T_k\setminus \cE_{\bphi, k, \alpha}
$$
is of first Baire category in $\T_k$.  For the Hausdorff dimension,~\cite[Corollary~1.3]{ChSh2} implies that 
$$
\dim \cE_{\bphi, k, \alpha} \rightarrow 0 \quad \text{as}  \quad \alpha \rightarrow 1,
$$
where $\dim$ means the Hausdorff dimension. We omit these details here.  

\subsection{Further possible extensions}

We have formulated  Theorems~\ref{thm:general} and~\ref{thm:A}  in terms of the unit torus $\T_d$ only. In fact these result 
may shed some light  for subsets of $\T_d$ also. For instance, 
 Theorem~\ref{thm:general} implies the following statement. 
 
 Let $\cA \subseteq \T_d$ such that 
$
\lambda(\pi_{d, k}(\cA))>0,
$
where the notation $\pi_{d, k}$ is given by~\eqref{eq:pidk} and the symbol $\lambda$  represents the $k$-dimensional Lebesgue measure. Note that the set $\cA$ itself may be of vanishing  $d$-dimensional Lebesgue measure.
Suppose that $\bphi \in \Z[T]^d$ is such that  the Wronskian  $W(T;\bphi)$ 
does not vanish identically, then for almost all $\vx\in \pi_{d, k}(\cA)$ one has
\begin{equation}
\label{eq:fiber}
\sup_{\vy\in \T_{d-k}} |T_{\va, \bphi}(\vx, \vy; N)| \le N^{\Gamma(\bphi,k) +o(1)},  \qquad N\to \infty,
\end{equation}
where $\Gamma(\bphi,k)$ comes from  Theorem~\ref{thm:general}.

Indeed, let $\cG\subseteq \T_d$ be the collection of vectors $\vx\in \T_k$ which satisfy~\eqref{eq:fiber}, then Theorem~\ref{thm:general} implies that the set $\cG$ has full measure (that is, $\lambda(\cG)=1$), and therefore  
$$
\lambda(\pi_{d, k}(\cA) \cap \cG)=\lambda(\pi_{d, k}(\cA)).
$$
Thus the bound~\eqref{eq:fiber} holds for almost all $\vx\in \pi_{d, k}(\cA)$.

Now, suppose that $\lambda(\pi_{d, k}(\cA))=0$ for some subset $\cA\subseteq \T_d$. 
It is interesting to investigate whether one can obtain an appropriate analogue  of Theorem~\ref{thm:general} in this case. 
 In the following we formulate a general framework for the possible extension of Theorem~\ref{thm:general}.

Let $\cA\subseteq \T_d$ and $\mu$ be a ``nice" probability measure on $\cA$, for example a {\it Borel measure\/}.  Suppose that the measure $\mu$ admits  some kind
 of the mean value theorem, that is, there exist  positive constants $s<t$ such that  ($a_n=n^{o(1)}$)
\begin{equation}
 \label{eq:mu-VMT}
 \int_{\cA} |T_{a, \bphi}(\vu; N)|^t d\mu(\vu)\le N^{s+o(1)}.
\end{equation} 
Furthermore,  
assume that
$\mu$ has some regular properties, for instance, there exist positive constants $\beta_1<\beta_2$ such that for any ball $\cB(\vu, r)$ 
centred at $\vu\in \cA$ and of positive radius   $r<1$ one has  
\begin{equation}
\label{eq:mu-regular}
r^{\beta_2}\ll \mu(\cB(\vu, r))\ll r^{\beta_1}
\end{equation}
with some absolute implied constants. Note that the condition~\eqref{eq:mu-regular} gives the upper bound  and lower bound on the measure of any given  high dimensional rectangle. 

We remark that our methods work for any subset $\cA\subseteq \T_d$ and any measure $\mu$ on $\cA$ which has the above properties~\eqref{eq:mu-VMT} and~\eqref{eq:mu-regular}. More precisely, let $\mu_{d, k}$ be the projection measure of $\pi_{d, k}$, that is, 
$$
\mu_{d, k}(\cF)=\mu\{\vu\in \T_d: ~\pi_{d, k}(\vu)\in \cF\}, \quad \cF\subseteq \T_k.
$$
Suppose that $\bphi \in \Z[T]^d$ is such that  the Wronskian  $W(T;\bphi)$ 
does not vanish identically, then for $\mu_{d, k}$-almost all $\vx\in \pi_{d, k}(\cA)$ one has
$$
\sup_{\vy\in \T_{d-k}} |T_{\va, \bphi}(\vx, \vy; N)| \le N^{\Gamma(\cA, \mu, \bphi) +o(1)},  \qquad N\to \infty,
$$
where $\Gamma(\cA, \mu, \bphi)$ is a positive constant which can be explicitly evaluated in terms 
of the parameters in~\eqref{eq:mu-VMT} and~\eqref{eq:mu-regular}.  
We  expect that 
$$
\Gamma(\cA, \mu, \bphi)<1
$$ 
holds in many natural situations. 

On the other hand, it is not clear how to extend Theorem~\ref{thm:general-D} (the result of the discrepancy)  to    subsets $\cA\subseteq \T_d$ with some measure $\mu$ on $\cA$ since in general  $\mu$ does not have the invariant property as in Lemma~\ref{lem:invairant}.   
For example, the famous {\it $\times 2 \times 3$-Conjecture\/} of Furstenberg, which still remains open (see~\cite{Schm,Wu} and references therein): 
using our notation as in  Lemma~\ref{lem:invairant}, 
let $\mu$ be a Borel probability measure on $[0,1)$  such that for any ``nice" subset $\cF\subseteq [0,1)$ the identity  
$$
\mu (\{x\in [0,1):~(gx \pmod 1) 
 \in \cF\} )=\mu(\cF) 
$$  
holds for $g=2$ and $g=3$, then $\mu$ is Lebesgue measure or some ``trivial" measure.

\section*{Acknowledgement}

The authors are grateful to Trevor Wooley for helpful discussions and patient answering their questions.  
The authors also would like to thank the anonymous referee for the very careful reading of the manuscript and 
many helpful comments. 

This work was  supported   by ARC Grant~DP170100786.

\end{document}